\newtheorem{theorem}{Theorem}[section]
\newtheorem{proposition}[theorem]{Proposition}
\newtheorem{lemma}[theorem]{Lemma}
\newtheorem{corollary}[theorem]{Corollary}
\theoremstyle{definition}
\newtheorem{definition}[theorem]{Definition}
\newtheorem{example}[theorem]{Example}
\newtheorem{remark}[theorem]{Remark}
\newtheorem{question}[theorem]{Question}
\newcommand{\ir}{{\mathsf{Irr}}}
\newcommand{\mn}{\mathbb N}
\newcommand{\cl}{{\rm cl}}
\newcommand{\ii}{{\rm int}}
\newcommand{\ua}{\mathord{\uparrow}}
\newcommand{\da}{\mathord{\downarrow}}
\newcommand{\mk}{\mathord{\mathsf{K}}}
\journal{}
\begin{document}

\begin{frontmatter}



\title{On sobriety of Scott topology on dcpos\tnoteref{t1}}
\tnotetext[t1]{This research was supported by the National Natural Science Foundation of China (Nos. 12071199, 11661057).}

\author[X. Xu]{Xiaoquan Xu}
\ead{xiqxu2002@163.com}
\address[X. Xu]{Fujian Key Laboratory of Granular Computing and Applications,

School of Mathematics and Statistics, Minnan Normal University, Zhangzhou 363000, China}

\begin{abstract}
In this paper, we mainly investigate the conditions under which the Scott topology on the product of two posets is equal to the product of the individual Scott topologies and under which the Scott topology on a dcpo is sober. Some such conditions are given.

\end{abstract}

\begin{keyword}  Sobriety; Scott topology; Product topology; $c$-poset; Property R; Lawson topology

\MSC 54D99; 54B20; 06B30; 06B35

\end{keyword}




\end{frontmatter}


\section{Introduction}

Sobriety is probably the most important and useful property of $T_0$ spaces (see [1-2, 7-14, 17-21, 23-29, 39-51]). It has been used in the characterizations of spectral spaces of commutative rings by Hochster in \cite{Hochster-1969} and the $T_0$ spaces which are determined by their open set lattices (see \cite{Drake-Thron-1965, Thron-1962}). In domain theory and non-Hausdorff topology, the Scott topology on posets (especially, dcpos) is the most important topology (see \cite{Abramsky-Jung-1994, GHKLMS-2003, Goubault-2013}). The following problem is one of the oldest and best-known open problems in domain theory (see \cite[page 155]{GHKLMS-2003}).

\vskip 0.1cm

$\mathbf{Problem~1.}$ Characterise those dcpos for which the Scott topology
is sober.

\vskip 0.1cm

This problem may be very difficult and has been open for decades. A relatively simpler problem is the following one.

\vskip 0.1cm

$\mathbf{Problem~2.}$ Under what conditions does the Scott
topology on a dcpo $P$ is sober?

\vskip 0.1cm

It is well-known that the Scott space of a continuous
domain is sober (see \cite{GHKLMS-2003}). Furthermore, it was proved in \cite{Gierz-Lawson-Stralka-1983} that the Scott space of a quasicontinuous
domain is sober. For a complete lattice $L$, the sobriety of Scott space $\Sigma~\!\!L$ is closely related with the property of $\Sigma~\!\!(L\times L)=\Sigma~\!\!L\times \Sigma~\!\!L$ (cf. \cite[Section II-4]{GHKLMS-2003}). The Scott
topology on $L\times L$ is strictly finer than the
product of the individual Scott topologies in general. The following question arises naturally.

\vskip 0.1cm

$\mathbf{Problem~3.}$ For a dcpo $P$, under what circumstances does the Scott
topology on $P\times P$ agree with the
product of the individual Scott topologies?

\vskip 0.1cm

Problem 2 and Problem 3 have attracted the attention of many researchers \cite{Erne-1985, GHKLMS-2003, Hertling-2022, Isbell-1982, Isbell-1985, Miao-Xi-Li-Zhao-2022, Xu-2016-2} and several important results have been obtained (see Sections 3-5 below).

In this paper, inspired by the recent work of Hertling \cite{Hertling-2022} and Miao et al. \cite{Miao-Xi-Li-Zhao-2022}, we continue to study Problem 2 and Problem 3. The paper is organized as follows:

In Section 2, we present some fundamental concepts and results about lattice-ordered structures and intrinsic topologies on posets that will be used in the whole paper.

Section 3 is dedicated to reviewing some related properties of sober spaces, well-filtered spaces and Scott topologies (especially, the Scott topology on the product of two posets).

In Section 4, we mainly investigate the conditions under which the Scott topology on the product of two posets is equal to the product of the individual Scott topologies. The concepts of $c$-posets and $\ell f_{\omega}$-posets are introduced. Based on Lawson's work on $\ell c_{\omega}$-spaces and $\ell c_{\omega}$-dcpos, some properties of $c$-posets and $\ell f_{\omega}$-posets are studied, and the implications of (1) $\Rightarrow$ (2) $\Rightarrow$ (3) $\Rightarrow$ (4) $\Rightarrow$ (5) are proved for the following conditions:  (1) the set $\mathrm{Id} (P)$ of all ideals of $P$ is countable; (2) the Scott space $\Sigma~\!\!P$ is a $c$-space; (3) $P$ is an $\ell f_{\omega}$-poset; (4) $P$ is an $\ell c_{\omega}$-poset; (5) $\Sigma~\!\!(P\times P)=\Sigma~\!\!P\times\Sigma~\!\!P$. Three related examples are presented.

In Section 5, using the property R, first introduced in \cite{Xu-2016-2}, we investigate the conditions under which the Scott
topology on a dcpo is sober. It is shown that for a poset $P$, the upper semi-compactness of the Lawson space $(P,\lambda(P))$ implies the property R of the Scott space $\Sigma~\!\!P$, and if a $T_0$ space $X$ is well-filtered and coherent, then $X$ is has property R. For the Scott space $\Sigma~\!\!P$ of a poset $P$, some characterizations of the property R are obtained. Based on them, it is proved that if a poset $P$ has property R and $\Sigma~\!\!(P\times P)=\Sigma~\!\!P\times\Sigma~\!\!P$, then $\Sigma~\!\!P$ is sober. Some applications of this result are given. Finally, we pose two related questions.

\section{Preliminaries}

In this section, we briefly recall some fundamental concepts and basic results that will be used in the paper. For further details, we refer the reader to \cite{Engelking-1989, GHKLMS-2003, Goubault-2013}.

For a poset $P$ and $A\subseteq P$, let
$\mathord{\downarrow}A=\{x\in P: x\leq  a \mbox{ for some }
a\in A\}$ and $\mathord{\uparrow}A=\{x\in P: x\geq  a \mbox{
	for some } a\in A\}$. For  $x\in P$, we write
$\mathord{\downarrow}x$ for $\mathord{\downarrow}\{x\}$ and
$\mathord{\uparrow}x$ for $\mathord{\uparrow}\{x\}$. The set $A$
is called a \emph{lower set} (resp., an \emph{upper set}) if
$A=\mathord{\downarrow}A$ (resp., $A=\mathord{\uparrow}A$). Let $P^{(<\omega)}=\{F\subseteq P : F \mbox{~is a nonempty finite set}\}$ and $\mathbf{Fin} P=\{\uparrow F :$ $F\in P^{(<\omega)}\}$.
 For a nonempty subset $A$ of $P$, define $\mathrm{max}(A)=\{a\in A : a \mbox{~ is a maximal element of~} A\}$ and $\mathrm{min}(A)=\{a\in A : a \mbox{~ is a minimal element of~} A\}$.

For a set $X$ and $A, B\subseteq X$, $A\subset B$ means that $A\subseteq B$ but $A\neq B$, that is, $A$ is a proper subset of $B$. Let $|X|$ be the cardinality of $X$ and $2^X$ the set of all subsets of $X$. The set of all natural numbers is denoted by $\mathbb{N}$. When $\mathbb{N}$ is regarded as a poset (in fact, a chain), the order on $\mathbb{N}$ is the usual order of natural numbers. Let $\omega=|\mathbb{N}|$.

    A poset $P$ is called an (\emph{inf}) \emph{semilattice} if for any two elements
$a, b\in Q$, \emph{inf}$\{a, b\}=a\wedge b$ exists in $Q$. Dually, $Q$ is a \emph{sup semilattice} if for any two elements $a, b\in Q$, \emph{sup}$\{a, b\}=a\vee b$ exists in $Q$. $P$  is called \emph{bounded complete}, if every subset that is bounded above
has a sup (i.e., the least upper bound). In particular, a bounded complete poset has a smallest
element, the least upper bound of the empty set. $P$ is called a \emph{complete lattice} if every subset has a sup and an inf.
A totally ordered complete lattice is called a \emph{complete chain}.

\begin{definition}\label{def-complete-Heyting-algebra} A \emph{frame} (or a \emph{complete Heyting algebra} (cHa)) is a complete lattice which satisfies the following infinite distributive law:
\vskip 3mm
(ID)  \qquad \qquad \qquad \qquad \qquad \qquad  $x\wedge (\bigvee A)=\bigvee_{a\in A}x\wedge a$,
\vskip 3mm
\noindent for all elements $x$ and all subsets $A$. Such a
complete lattice is also called a \emph{frame} (see \cite{Johnstone-1993}). Clearly, every
complete Heyting algebra is distributive when viewed as lattice.

\end{definition}

For the following definition, we refer the reader to \cite{Johnstone-1993}.

\begin{definition}\label{def-spatial-frame}  An element $p$ of a meet-semilattice $S$ is a \emph{prime element} if for any $a, b\in S$, $a\wedge b\le p$ implies $a\le p$ or $b\le p$. A frame $A$ is called \emph{spatial} if every element of $A$ can be expressed as a  meet of prime elements.
\end{definition}

\begin{lemma}\label{spatial-frame=lattice-of-open-sets} (\cite{Johnstone-1993}) A complete lattice $L$ is a spatial frame iff it is isomorphic to the lattice of all open subsets of some topological space.
\end{lemma}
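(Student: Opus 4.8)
The plan is to treat the stated equivalence as two separate implications. For sufficiency, suppose $L\cong\mathcal{O}(X)$, the lattice of open sets of a space $X$, and argue that $\mathcal{O}(X)$ is always a spatial frame. First I would record the complete-lattice structure: joins are unions, the meet of a family is $\bigwedge_i U_i=\ii(\bigcap_i U_i)$, the top is $X$, and the bottom is $\emptyset$. The infinite distributive law (ID) is then immediate, since for a finite meet against an arbitrary join no interior correction is needed and the required identity $U\cap\bigcup_i V_i=\bigcup_i(U\cap V_i)$ is purely set-theoretic; hence $\mathcal{O}(X)$ is a frame.

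The substantive part of this direction is spatiality. Given an open set $W$ and a point $y\notin W$, I would form $V_y=\bigcup\{O\in\mathcal{O}(X):W\subseteq O,\ y\notin O\}$, the largest open set containing $W$ and missing $y$ (this union is open and still avoids $y$). I would show $V_y$ is prime by a maximality argument: if $A\cap B\subseteq V_y$ while $A\not\subseteq V_y$ and $B\not\subseteq V_y$, then $V_y\cup A$ and $V_y\cup B$ are open sets strictly larger than $V_y$ still containing $W$, so by maximality each must contain $y$, forcing $y\in A$ and $y\in B$, whence $y\in A\cap B\subseteq V_y$, a contradiction. Since $\bigcap_{y\notin W}V_y=W$ as sets and $W$ is open, $W=\ii(\bigcap_{y\notin W}V_y)=\bigwedge_{y\notin W}V_y$ is a meet of prime elements (with the empty meet covering $W=X$), so $\mathcal{O}(X)$, and therefore $L$, is a spatial frame.

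For necessity, given an abstract spatial frame $L$ I would take $X$ to be the set of \emph{proper} prime elements of $L$ and define $\Phi\colon L\to 2^X$ by $\Phi(a)=\{p\in X:a\not\le p\}$. Primeness gives exactly the two algebraic facts needed: $\bigvee_i a_i\le p$ iff every $a_i\le p$, so $\Phi$ preserves arbitrary joins; and $a\wedge b\le p$ iff $a\le p$ or $b\le p$, so $\Phi$ preserves finite meets; moreover $\Phi(\top)=X$ and $\Phi(\bot)=\emptyset$. Consequently $\tau=\{\Phi(a):a\in L\}$ is closed under arbitrary unions and finite intersections and contains $X$ and $\emptyset$, so $\tau$ is a topology, $\tau=\mathcal{O}(X)$, and $\Phi\colon L\to\mathcal{O}(X)$ is a surjective frame homomorphism.

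Finally I would use spatiality to upgrade $\Phi$ to an isomorphism. If $a\not\le b$, then since $b=\bigwedge\{p\text{ prime}:p\ge b\}$ there is a prime $p\ge b$ with $a\not\le p$; such a $p$ is proper, so $p\in\Phi(a)\setminus\Phi(b)$. Hence $\Phi(a)\subseteq\Phi(b)$ forces $a\le b$, and together with monotonicity this makes $\Phi$ an order embedding, thus an order isomorphism onto $\mathcal{O}(X)$ and therefore a frame isomorphism, giving $L\cong\mathcal{O}(X)$. I expect the two genuine obstacles to be precisely the places where the hypothesis is indispensable: producing enough prime open sets to reconstruct an arbitrary open set as their meet in the sufficiency direction, and invoking spatiality to force injectivity of $\Phi$ in the necessity direction. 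The frame axioms and the verification that $\tau$ is a topology are, by contrast, routine.
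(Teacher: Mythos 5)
Your proposal is correct, and in fact the paper contains no proof of this lemma at all --- it is stated with a citation to Johnstone's \emph{Stone Spaces} --- so there is no internal argument to compare against; your write-up supplies the standard classical proof, and it matches the paper's definitions precisely (in particular, it uses the prime-element formulation of spatiality from Definition~\ref{def-spatial-frame} rather than the completely-prime-filter or frame-homomorphism descriptions of points one often sees in Johnstone). Both halves check out: the maximal open set $V_y$ containing $W$ and missing $y$ is indeed prime by your maximality argument, $\bigcap_{y\notin W}V_y=W$ holds on the nose so no interior correction is needed, and in the converse direction $\Phi(a)=\{p\in X: a\not\le p\}$ is a surjective frame homomorphism onto the topology $\tau=\Phi(L)$ whose injectivity (indeed, order-reflection) is exactly where spatiality enters, via $b=\bigwedge\{p \text{ prime}: p\ge b\}$. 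One cosmetic slip worth fixing: you attribute the fact that $\bigvee_i a_i\le p$ iff every $a_i\le p$ to primeness, but this is just the universal property of suprema and holds for arbitrary $p$; primeness is needed only for the binary-meet identity $a\wedge b\le p \Rightarrow a\le p$ or $b\le p$. This misattribution has no effect on the validity of the argument.
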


\begin{definition}\label{def-CD-lattice}  A \emph{completely distributive lattice} is a complete lattice which satisfies the following completely distributive law:
\vskip 3mm
(CD)  \qquad \qquad \qquad \qquad \qquad \qquad  $\bigwedge\limits_{i\in I} \bigvee A_i=\bigvee\limits_ {\varphi\in \Pi_{i\in I}A_i} \bigwedge \varphi(I)$,
\vskip 3mm
\noindent for all family $\{A_i : i\in I\}\subseteq 2^X$.
\end{definition}

It is well-known that the completely distributivity is self-dual, that is, a complete lattice $L$ is completely distributive iff its dual $L^{op}$ is completely distributive (see \cite{Raney-1952, Raney-1960}).

 A nonempty subset $D$ of a poset $P$ is \emph{directed} if every two
elements in $D$ have an upper bound in $D$. $P$ is said to be \emph{directed} if $P$ itself is directed. The set of all directed sets of $P$ is denoted by $\mathcal D(P)$. The poset $P$ is called a \emph{directed complete poset}, or \emph{dcpo} for short, if for any
$D\in \mathcal D(P)$, $\vee D$ exists in $P$.

  For a $T_0$ space $X$, we use $\leq_X$ to denote the \emph{specialization order} of $X$: $x\leq_X y$ if{}f $x\in \overline{\{y\}}$. Clearly, all open sets  (resp., closed sets) of $X$ are upper sets (resp., lower sets) of $X$. A subset $A$ of $X$ is called \emph{saturated} if $A$ equals the intersection of all open sets containing it (equivalently, $A$ is an upper set in the specialization order). Let $\mathcal O(X)$ (resp., $\mathcal C(X)$) be the set of all open subsets (resp., closed subsets) of $X$, and let $\mathcal D_c(X)=\{\overline{D} : D\in \mathcal D(X)\}$.

  In what follows, when a $T_0$ space $X$ is considered as a poset, the order always refers to the specialization order if no other explanation. We will use $\Omega X$ or even $X$ to denote the poset $(X, \leq_X)$.

\begin{definition}\label{def-upper-semiclosed} Let $P$ be a poset equipped with a topology. The partial
order is said to be \emph{upper semiclosed} if each $\ua x
$ is closed.
\end{definition}

\begin{definition}\label{def-upper-semicompact}
A topological space $X$ with a partially order is called \emph{upper semicompact}, if $\ua x$ is compact for any $x\in X$, or equivalently, if $\ua x\cap A$ is compact for any $x\in X$ and $A\in \mathcal C(X)$.
\end{definition}

A subset $U$ of a poset $P$ is said to be \emph{Scott open} if (i) $U=\mathord{\uparrow}U$, and (ii) for any directed subset $D$ with
$\vee D$ existing, $\vee D\in U$ implies $D\cap
U\neq\emptyset$. All Scott open subsets of $P$ form a topology,
called the \emph{Scott topology} on $P$ and
denoted by $\sigma(P)$. The space $\Sigma~\!\! P=(P,\sigma(P))$ is called the
\emph{Scott space} of $P$. The \emph{lower topology} on $P$, generated
by $\{P\setminus \ua x : x\in P\}$ (as a subbase), is denoted by $\omega (P)$. Dually, define the \emph{upper topology} on $P$ (generated
by $\{P\setminus \da x : x\in P\}$) and denote it by $\upsilon (P)$.
The topology generated by $\omega (P)\cup\sigma (P)$ is called the \emph{Lawson topology} on $P$ and is denoted by $\lambda (P)$. The upper sets of $P$ form the (\emph{upper}) \emph{Alexandroff topology} $\alpha (P)$.

\begin{lemma}\label{upper-semiclosed-closed} (\cite{GHKLMS-2003})
Let $X$ be a topological space with an upper semiclosed partial order. If $A$
is a compact subset of $X$, then $\da A$ is Scott closed.
\end{lemma}

\begin{lemma}\label{Scott-cont-charac} (\cite{GHKLMS-2003}) Let $P, Q$ be posets and $f : P \longrightarrow Q$. Then the following two conditions are equivalent:
\begin{enumerate}[\rm (1)]
	\item $f$ is Scott continuous, that is, $f : \Sigma~\!\! P \longrightarrow \Sigma~\!\! Q$ is continuous.
	\item For any $D\in \mathcal D(P)$ for which $\vee D$ exists, $f(\vee D)=\vee f(D)$.
\end{enumerate}
\end{lemma}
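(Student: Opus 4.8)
The plan is to prove both implications by exploiting a single structural fact: for any poset $Q$ and any $u\in Q$, the principal lower set $\da u$ is Scott closed, so its complement $Q\setminus\da u$ is Scott open. Before handling either direction I would record that the specialization order of $\Sigma~\!\!P$ coincides with the original order $\le$ of $P$, since $\overline{\{y\}}=\da y$ in the Scott topology ($\da y$ is a lower set, and if $D\subseteq\da y$ is directed with $\vee D$ existing then $\vee D\le y$, so $\da y$ is Scott closed and is the smallest closed set containing $y$). This identification makes monotonicity the common entry point for both directions.

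For $(1)\Rightarrow(2)$: assuming $f$ is continuous, I would first note that a continuous map between $T_0$ spaces preserves the specialization order, so $f$ is monotone; consequently, for $D\in\mathcal D(P)$ with $\vee D$ existing, $f(D)$ is directed and $f(\vee D)$ is an upper bound of $f(D)$. To see it is the least upper bound, let $u$ be any upper bound of $f(D)$ and suppose for contradiction that $f(\vee D)\not\le u$. Then $f(\vee D)$ lies in the Scott-open set $Q\setminus\da u$, so $\vee D\in f^{-1}(Q\setminus\da u)$, which is Scott open by continuity; applying the defining property of Scott-open sets to the directed set $D$ produces some $d\in D$ with $f(d)\notin\da u$, contradicting that $u$ bounds $f(D)$. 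Hence $f(\vee D)=\vee f(D)$.

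For $(2)\Rightarrow(1)$: I would first extract monotonicity from the equation by testing it on the two-element directed set $\{x,y\}$ with $x\le y$; since $\vee\{x,y\}=y$, condition (2) gives $\vee\{f(x),f(y)\}=f(y)$, forcing $f(x)\le f(y)$. Then, to show $f^{-1}(V)$ is Scott open for each Scott-open $V\subseteq Q$, I would check that it is an upper set (immediate from monotonicity together with $V=\ua V$), and verify the directed-sup condition: if $D\in\mathcal D(P)$ has $\vee D\in f^{-1}(V)$, then $f(\vee D)=\vee f(D)\in V$ with $f(D)$ directed, so Scott-openness of $V$ yields $f(d)\in V$ for some $d\in D$, giving $D\cap f^{-1}(V)\neq\emptyset$.

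The only delicate point, and the main thing to get right, is the reading of condition (2): the equation $f(\vee D)=\vee f(D)$ must be understood as asserting both that $\vee f(D)$ exists and that it equals $f(\vee D)$. This is exactly what licenses the deduction of monotonicity in the $(2)\Rightarrow(1)$ direction and what guarantees that $f(D)$ is directed with an existing supremum at the Scott-openness step. Everything else is a routine application of the characterization of Scott-open sets together with the Scott-closedness of principal down-sets, so I do not anticipate any genuine obstacle beyond bookkeeping.
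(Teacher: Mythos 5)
Your proof is correct and complete; note that the paper states this lemma without proof, citing \cite{GHKLMS-2003}, and your argument is precisely the standard one found there (Scott-closedness of the principal ideals $\da u$, monotonicity via the specialization order in one direction and the two-element directed set $\{x,y\}$ in the other, followed by the routine verification that $f^{-1}(V)$ is an upper set satisfying the directed-sup condition). You also correctly flag the one genuinely delicate point: condition (2) must be read as asserting that $\vee f(D)$ exists and equals $f(\vee D)$, which is exactly what licenses extracting monotonicity and applying Scott-openness of $V$ to the directed set $f(D)$.
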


A $T_0$ space $X$ is called a \emph{d-space} (or \emph{monotone convergence space}) if $X$ (with the specialization order) is a dcpo
 and $\mathcal O(X) \subseteq \sigma(X)$ (cf. \cite{GHKLMS-2003, Wyler-1981}).

For the following definition and related conceptions, please refer to \cite{GHKLMS-2003, Goubault-2013}.

\begin{definition}\label{def-way-below-relation} For a dcpo $P$ and $A, B\subseteq P$, we say $A$ is \emph{way below} $B$, written $A\ll B$, if for each $D\in \mathcal D(P)$, $\vee D\in \ua B$ implies $D\cap \ua A\neq \emptyset$. For $B=\{x\}$, a singleton, $A\ll B$ is
written $A\ll x$ for short. For $x\in P$, let $w(x)=\{F\in P^{(<\omega)} : F\ll x\}$, $\Downarrow x = \{u\in P : u\ll x\}$ and $K(P)=\{k\in P : k\ll k\}$. Points in $K(P)$ are called \emph{compact} elements of $P$.
\end{definition}

\begin{definition}\label{def-continuous-domain} Let $P$ be a dcpo and $X$ a $T_0$ space.
\begin{enumerate}[\rm (1)]
\item $P$ is called a \emph{continuous domain}, if for each $x\in P$, $\Downarrow x$ is directed
and $x=\vee\Downarrow x$.
\item $P$ is called a \emph{quasicontinuous domain}, if for each $x\in P$, $\{\ua F : F\in w(x)\}$ is filtered and $\ua x=\bigcap
\{\ua F : F\in w(x)\}$.
\item $X$ is called \emph{core compact} if $\mathcal O(X)$ is a \emph{continuous lattice}.
\end{enumerate}
\end{definition}

It is well-known that every continuous domain is a quasicontinuous domain but the converse implication does not hold in general (see \cite{GHKLMS-2003}).

For the concepts in the following definition, please refer to \cite{Erne-2018-2, GHKLMS-2003, Heckmann-1992}.

    \begin{definition}\label{def-C-LHC-CC-spaces} Let $X$ be a topological space and $S\subseteq X$.
     \begin{enumerate}[\rm (1)]
     \item $S$ is called \emph{strongly compact} if for any open set $U$ with $S \subseteq U$,
there is a finite set $F$  with $S\subseteq \uparrow  F \subseteq U$).
\item $S$ is called \emph{supercompact} if for
any family $\{U_i : i\in I\}\subseteq \mathcal O(X)$, $S\subseteq \bigcup_{i\in I} U_i$  implies $S\subseteq U_i$ for some $i\in I$.
\item $X$ is called \emph{locally hypercompact} if for each $x\in X$ and each open neighborhood $U$ of $x$, there is a strongly compact set $S$
such that $x\in\ii\,\ua S\subseteq \ua S\subseteq U$ or, equivalently, there is $\ua F\in \mathbf{Fin}X$ such that $x\in\ii\,\ua F\subseteq\ua F\subseteq U$.
\item $X$ is called a $C$-\emph{space} if for each $x\in X$ and each open neighborhood $U$ of $x$, there is a supercompact set $S$
such that $x\in\ii\,\ua S\subseteq \ua S\subseteq U$.
\end{enumerate}
\end{definition}

\begin{remark}\label{core-compact-not-LC} It is easy to verify that if a topological space $X$ is locally compact, then it is core compact (see, e.g., \cite[Examples I-1.7]{GHKLMS-2003}). In \cite{Hofmann-Lawson-1978} (see also \cite[Exercise V-5.25]{GHKLMS-2003}) Hofmann and Lawson gave a second-countable core compact $T_0$ space $X$ in which every compact subset of $X$ has empty interior, and hence it is not locally compact.
\end{remark}

\begin{lemma}\label{supercompact-set-charac} (\cite{Heckmann-Keimel-2013})  For a $T_0$ space and a nonempty saturated subset $K$ of $X$, the following two conditions are equivalent:
\begin{enumerate}[\rm (1)]
\item $K$ is supercompact,
\item $K=\ua x$ for some $x \in X$.
\end{enumerate}
\end{lemma}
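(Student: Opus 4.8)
The plan is to prove the two implications separately: the direction (2) $\Rightarrow$ (1) uses only that open sets are upper sets in the specialization order, while the substantive direction (1) $\Rightarrow$ (2) extracts a least element of $K$ by testing supercompactness against one cleverly chosen open cover.

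For (2) $\Rightarrow$ (1), suppose $K=\ua x$ for some $x\in X$. Given any family $\{U_i : i\in I\}\subseteq \mathcal O(X)$ with $\ua x\subseteq \bigcup_{i\in I}U_i$, I would note that $x\in U_{i_0}$ for some $i_0\in I$. Since every open set is an upper set in the specialization order, $x\in U_{i_0}$ forces $\ua x\subseteq U_{i_0}$, which is exactly the supercompactness of $K=\ua x$.

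For the harder direction (1) $\Rightarrow$ (2), the idea is to manufacture a least element of $K$. I would consider the family $\mathcal F=\{X\setminus\overline{\{y\}} : y\in K\}$, each member of which is open because $\overline{\{y\}}=\da y$ is closed. The crucial observation is that \emph{no} single member of $\mathcal F$ can contain $K$: if $K\subseteq X\setminus\overline{\{y\}}$ for some $y\in K$, then $K\cap\overline{\{y\}}=\emptyset$, contradicting $y\in K\cap\overline{\{y\}}$. Applying the contrapositive of supercompactness to $\mathcal F$, the family therefore fails to cover $K$, so there is some $z\in K$ with $z\notin X\setminus\overline{\{y\}}$ for every $y\in K$; equivalently $z\in\overline{\{y\}}$, i.e. $z\leq_X y$, for all $y\in K$. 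Thus $z$ is a least element of $K$. Since $K$ is saturated, hence an upper set, and $z\in K$, I get $\ua z\subseteq K$; conversely $K\subseteq\ua z$ because every $y\in K$ satisfies $z\leq_X y$. Hence $K=\ua z$, as required (here $K\neq\emptyset$ guarantees both that $\mathcal F$ is nonempty and that such a $z$ exists).

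The main obstacle is precisely in (1) $\Rightarrow$ (2): the decisive step is to realize that supercompactness should be applied not to an arbitrary cover but to the cover by complements of point-closures $\{X\setminus\overline{\{y\}} : y\in K\}$, and then to read the least element off from the failure of this particular family to be a cover. Once the right cover is identified, the remainder is routine bookkeeping with the specialization order and the characterization of saturated sets as upper sets.
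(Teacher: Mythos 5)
Your proof is correct and takes essentially the same route as the paper: the paper also tests supercompactness against the cover $\{X\setminus\da k : k\in K\}$, which coincides with your family $\{X\setminus\overline{\{y\}} : y\in K\}$ since $\overline{\{y\}}=\da y$ in the specialization order, and extracts a least element of $K$ from the failure of this cover. The only cosmetic difference is that the paper argues by contradiction (assuming $K\cap\bigcap_{k\in K}\da k=\emptyset$) where you invoke the contrapositive of supercompactness directly.
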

\begin{proof} (1) $\Rightarrow$ (2): We show that $K\cap \bigcap_{k\in K}\downarrow k\neq\emptyset$. Assume, on the contrary, that $K\cap \bigcap_{k\in K}\downarrow k=\emptyset$, then $K\subseteq \bigcup_{k\in K} (X\setminus \downarrow k)$. As $K$ is supercompact, there is $k\in K$ such that $K\subseteq X\setminus \downarrow k$, a contradiction, proving that $K\cap \bigcap_{k\in K}\downarrow k\neq\emptyset$. Select an $x\in K\cap \bigcap_{k\in K}\downarrow k$. Then $x$ is the least element of $K$ and hence $K=\ua x$.

(2) $\Rightarrow$ (1): Trivial.

\end{proof}

\begin{lemma}\label{C-space=CD-topology} (\cite{Erne-2009}) For a topological space $X$, the following two conditions are equivalent:
 \begin{enumerate}[\rm (1)]
 \item $X$ is a $C$-space.
 \item For each $U\in \mathcal O(X)$ and $x\in U$, there is $u\in U$ such that $x\in \ii~\ua u\subseteq \ua u\subseteq U$.
 \item  $\mathcal O(X)$ is a completely distributive lattice.
 \end{enumerate}
 \end{lemma}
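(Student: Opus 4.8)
The plan is to prove the equivalence of the three conditions characterizing $C$-spaces by a cyclic chain $(1)\Rightarrow(2)\Rightarrow(3)\Rightarrow(1)$, where $(3)$ is the statement that $\mathcal{O}(X)$ is completely distributive. The implication $(2)\Rightarrow(1)$ should be essentially immediate: given $x$ and an open neighborhood $U$, condition $(2)$ produces a point $u\in U$ with $x\in\ii\,\ua u\subseteq\ua u\subseteq U$; since $\ua u$ is the upper set of a single point in the specialization order, it is supercompact by Lemma~\ref{supercompact-set-charac}, so $S=\ua u$ witnesses the $C$-space condition. Conversely, for $(1)\Rightarrow(2)$, I would start from a supercompact saturated set $S$ with $x\in\ii\,\ua S\subseteq\ua S\subseteq U$; by Lemma~\ref{supercompact-set-charac} the (saturation of the) supercompact set $S$ equals $\ua u$ for some $u$, and one checks $u\in U$ and $x\in\ii\,\ua u$, giving exactly condition $(2)$. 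Thus the real content lies in relating $(2)$ to complete distributivity.

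For the equivalence with complete distributivity I would work through the interior operator. Define an auxiliary relation on $\mathcal{O}(X)$ by $V\prec U$ iff $V\subseteq\ii\,\ua u\subseteq\ua u\subseteq U$ for some point $u$ (or more invariantly, the way-below-style relation $V\Lleftarrow U$ meaning $x\in\ii\,\ua u$ for every $x\in V$). Condition $(2)$ says precisely that every open $U$ equals the union $U=\bigcup\{\ii\,\ua u : u\in U,\ \ua u\subseteq U\}$, i.e. that the "totally-below" or "well-below" elements approximate every open set from below. The standard characterization of completely distributive lattices (due to Raney, cited in the excerpt after Definition~\ref{def-CD-lattice}) states that a complete lattice is completely distributive iff every element is the supremum of the elements totally below it, where $a$ is totally below $b$ when every set whose supremum dominates $b$ already has a member above $a$. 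So the strategy for $(2)\Leftrightarrow(3)$ is to show that in the frame $\mathcal{O}(X)$ the relation "$V$ is totally below $U$" coincides, up to the approximation property, with the existence of a point $u$ with $V\subseteq\ii\,\ua u\subseteq\ua u\subseteq U$.

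Concretely, for $(2)\Rightarrow(3)$ I would verify Raney's criterion: take any $U\in\mathcal{O}(X)$ and show $U=\bigvee\{V : V\text{ totally below }U\}$. Using $(2)$, each $x\in U$ sits in some $\ii\,\ua u$ with $\ua u\subseteq U$; I would argue that $\ii\,\ua u$ is totally below $U$ in $\mathcal{O}(X)$ — if $U\subseteq\bigcup_{i}W_i$ then $u\in W_{i_0}$ for some $i_0$, whence $\ua u\subseteq W_{i_0}$ by upper-set-ness of open sets and so $\ii\,\ua u\subseteq W_{i_0}$ — giving the required supremum decomposition. For $(3)\Rightarrow(2)$ I would take $U$ open and $x\in U$; by complete distributivity $U=\bigvee\{V : V\text{ totally below }U\}=\bigcup\{V\}$, so $x\in V$ for some totally-below $V$, and the main task is to extract from such a $V$ an actual point $u$ with $x\in\ii\,\ua u\subseteq\ua u\subseteq U$.

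I expect the main obstacle to be exactly this last extraction in $(3)\Rightarrow(2)$: complete distributivity is a purely order-theoretic statement about $\mathcal{O}(X)$, and converting the lattice-theoretic "totally below" relation into a \emph{point} $u$ of the space requires exploiting the spatiality of $\mathcal{O}(X)$ together with Lemma~\ref{spatial-frame=lattice-of-open-sets}. The delicate point is that a single totally-below open set $V$ need not itself be of the form $\ii\,\ua u$; one must select an appropriate prime (or meet-irreducible) element of the frame, correspond it to a point via the isomorphism with a lattice of open sets, and check the interior condition holds. Because this is cited as a known result of Ern\'e, I would lean on the frame-theoretic dictionary rather than redo Raney's theorem, and the care needed is only in translating completely between the topological and the lattice-theoretic languages.
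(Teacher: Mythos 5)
The paper gives no proof of this lemma at all---it is quoted from Ern\'e \cite{Erne-2009}---so your proposal must stand on its own. Most of it does. The equivalence (1) $\Leftrightarrow$ (2) via Lemma~\ref{supercompact-set-charac} is exactly right (with the small point, which you noted, that one applies the lemma to the saturation $\ua S$, which is again supercompact because open sets are upper sets). The implication (2) $\Rightarrow$ (3) via Raney's criterion is also correct: writing $V\vartriangleleft U$ for ``$V$ totally below $U$'' (for every $\mathcal W\subseteq\mathcal O(X)$ with $U\subseteq\bigcup\mathcal W$ there is $W\in\mathcal W$ with $V\subseteq W$), your verification that $\ii\,\ua u\vartriangleleft U$ whenever $u\in U$ is complete, and Raney's theorem that a complete lattice is completely distributive iff every element is the join of the elements totally below it then yields (3).

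The genuine gap is exactly where you predicted it, but your proposed repair would fail. For (3) $\Rightarrow$ (2) you want to convert a totally-below open $V\ni x$ into a point $u$, and you propose to do this by selecting a prime element of the frame and ``corresponding it to a point'' via spatiality and Lemma~\ref{spatial-frame=lattice-of-open-sets}. But meet-prime elements of $\mathcal O(X)$ correspond to irreducible closed subsets of $X$, i.e.\ to points of the sobrification, and they correspond to points of $X$ itself only when $X$ is sober. $C$-spaces need not be sober: the Alexandroff space of the chain $\mathbb N$ is a $C$-space (each $\ua n$ is open), yet $\mathbb N$ is an irreducible closed set with no generic point. So the prime/point dictionary cannot in general produce the required $u\in X$, and your argument would leave the lemma unproved for precisely such spaces. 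The extraction is in fact elementary and needs neither primes nor spatiality. Claim: if $\emptyset\neq V\vartriangleleft U$, then $V\subseteq\ua u$ for some $u\in U$. Otherwise, for every $u\in U$ there is $v_u\in V$ with $v_u\notin\ua u$, i.e.\ $u\notin\da v_u=\overline{\{v_u\}}$; hence $U\subseteq\bigcup_{v\in V}\bigl(X\setminus\overline{\{v\}}\bigr)$, a family of \emph{open} sets, and $V\vartriangleleft U$ gives $V\subseteq X\setminus\overline{\{v_0\}}$ for some $v_0\in V$, contradicting $v_0\in\overline{\{v_0\}}$. Now given $x\in U$, complete distributivity (Raney again) provides $V\vartriangleleft U$ with $x\in V$; the claim yields $u\in U$ with $V\subseteq\ua u$, whence $x\in V\subseteq\ii\,\ua u\subseteq\ua u\subseteq U$, since $V$ is open and $U$ is an upper set. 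This closes the gap and also confirms your correct observation that $V$ itself need not be of the form $\ii\,\ua u$: one only ever needs $V\subseteq\ua u$.
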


\begin{lemma}\label{continuous-domain=Scott-topology-CD} (\cite{GHKLMS-2003}) For a dcpo $P$, the following three conditions are equivalent:
\begin{enumerate}[\rm (1)]
\item $P$ is continuous.
\item $\Sigma~\!\! P$ is a C-space (that is, $\sigma (P)$ is a completely distributive lattice).
\item For each $U\in \sigma (P)$ and $x\in U$, there is $u\in U$ such that $x\in \ii~\!_{\sigma (P)}\ua u\subseteq \ua u\subseteq U$.
\end{enumerate}
\end{lemma}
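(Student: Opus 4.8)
\emph{Plan.} The equivalence (2) $\Leftrightarrow$ (3) is essentially formal: applying Lemma~\ref{C-space=CD-topology} to the space $X=\Sigma P$ (so that $\mathcal O(X)=\sigma(P)$ and the topological interior is $\ii_{\sigma(P)}$), its conditions (1) and (3) together say exactly that $\Sigma P$ is a $C$-space with $\sigma(P)$ completely distributive, i.e. our (2), while its condition (2) is verbatim our (3). Hence (2) $\Leftrightarrow$ (3) is immediate and the whole lemma reduces to (1) $\Leftrightarrow$ (3). Before that I would record two elementary facts valid in an arbitrary dcpo $P$: first, for each $y\in P$ the set $\da y$ is Scott closed (a lower set closed under directed sups), so $P\setminus\da y\in\sigma(P)$; second, if $x\in\ii_{\sigma(P)}\ua u$ then $u\ll x$, since, writing $V=\ii_{\sigma(P)}\ua u$, for any directed $D$ with $x\le\vee D$ the upper set $V$ contains $\vee D$, so $D\cap V\neq\emptyset$ and therefore $D\cap\ua u\neq\emptyset$.

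For (1) $\Rightarrow$ (3) I would fix $U\in\sigma(P)$ and $x\in U$. Since $P$ is continuous, $\Downarrow x$ is directed with $\vee\Downarrow x=x\in U$, so Scott-openness of $U$ yields some $u\in\Downarrow x\cap U$; as $U$ is an upper set, $\ua u\subseteq U$. It then remains to see $x\in\ii_{\sigma(P)}\ua u$, for which I would show that $\{y\in P:u\ll y\}$ is Scott open. It is plainly an upper set; and if $\vee D\in\{y:u\ll y\}$ for a directed $D$, the \emph{interpolation property} of continuous domains provides $w$ with $u\ll w\ll\vee D$, whence $w\le d$ for some $d\in D$ and then $u\ll d$, so $D$ meets the set. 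Since $u\ll x$ and $\{y:u\ll y\}\subseteq\ua u$, this gives $x\in\{y:u\ll y\}\subseteq\ii_{\sigma(P)}\ua u\subseteq\ua u\subseteq U$.

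For (3) $\Rightarrow$ (1) I would fix $x\in P$ and set $B=\{u\in P : x\in\ii_{\sigma(P)}\ua u\}$; by the second preliminary fact $B\subseteq\Downarrow x\subseteq\da x$. Then I would verify three things. (i) $B$ is directed: it is nonempty by applying (3) to $U=P$, and given $u_1,u_2\in B$ the set $V=\ii_{\sigma(P)}\ua u_1\cap\ii_{\sigma(P)}\ua u_2$ is a Scott-open neighbourhood of $x$, so (3) produces $u_3\in V$ with $x\in\ii_{\sigma(P)}\ua u_3$; thus $u_3\in B$ and $u_1,u_2\le u_3$ because $V\subseteq\ua u_1\cap\ua u_2$. (ii) $\vee B=x$: clearly $x$ is an upper bound of $B$, and if $y$ were an upper bound with $x\not\le y$, then $x\in P\setminus\da y\in\sigma(P)$, so (3) would give $u\in(P\setminus\da y)\cap B$, contradicting $u\le y$. (iii) $B=\Downarrow x$: given $v\ll x=\vee B$ with $B$ directed, there is $b\in B$ with $v\le b$; then $\ii_{\sigma(P)}\ua b$ is a Scott-open set containing $x$ and contained in $\ua b\subseteq\ua v$, so $x\in\ii_{\sigma(P)}\ua v$, i.e. $v\in B$. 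Hence $\Downarrow x=B$ is directed with supremum $x$, so $P$ is continuous.

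The main obstacle is the interpolation property invoked in (1) $\Rightarrow$ (3): it is precisely what makes the ``upper way-below'' sets $\{y:u\ll y\}$ Scott open, and it is the only genuinely non-formal ingredient. Notably, the converse direction (3) $\Rightarrow$ (1) manufactures its own interpolation from condition (3) through the intersection $V=\ii_{\sigma(P)}\ua u_1\cap\ii_{\sigma(P)}\ua u_2$, and so needs nothing beyond the two elementary facts above.
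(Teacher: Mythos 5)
Your proposal is correct. Note, however, that the paper itself offers no proof of this lemma: it is quoted verbatim from \cite{GHKLMS-2003}, so there is no in-paper argument to compare against. What you have written is essentially the standard textbook proof from that reference: the reduction of (2) $\Leftrightarrow$ (3) to Lemma~\ref{C-space=CD-topology} applied to $X=\Sigma~\!\!P$ is legitimate (the specialization order of $\Sigma~\!\!P$ is the order of $P$, so the sets $\ua u$ agree in both readings); the direction (1) $\Rightarrow$ (3) via Scott-openness of $\{y : u\ll y\}$ is the classical argument, with the interpolation property of continuous domains as the one imported ingredient --- you flag this honestly, and it is a standard fact (\cite[Theorem I-1.9]{GHKLMS-2003}), though in a fully self-contained write-up you would prove it; and the direction (3) $\Rightarrow$ (1) via the set $B=\{u : x\in\ii_{\sigma(P)}\ua u\}$, checking directedness, $\vee B=x$ (using that $\da y$ is Scott closed), and $B=\Downarrow x$, is exactly the usual route and is carried out without gaps. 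In particular your observation that (3) $\Rightarrow$ (1) needs no interpolation, because the intersection $\ii_{\sigma(P)}\ua u_1\cap\ii_{\sigma(P)}\ua u_2$ manufactures the needed upper bounds inside $B$, is accurate.
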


\begin{lemma}\label{quasicontinuou-domain=Scott-topology-LHC} (\cite{Gierz-Lawson-Stralka-1983, Heckmann-1992}) Let $P$ be a dcpo. Then
\begin{enumerate}[\rm (1)]
\item $P$ is quasicontinuous,
\item $\Sigma ~\!\! P$ is locally hypercompact (that is, $\sigma (P)$ is a hypercontinuous lattice),
\item For each $U\in \sigma (P)$ and $x\in U$, there is  $\ua F\in \mathbf{Fin}X$ such that $x\in\ii\,\ua F\subseteq\ua F\subseteq U$.
\end{enumerate}
\end{lemma}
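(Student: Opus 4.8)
The plan is to prove the equivalence of the three conditions, treating (2) and (3) as two faces of a single statement and concentrating the work on (1) $\Leftrightarrow$ (3). Indeed, by Definition \ref{def-C-LHC-CC-spaces}(3) the space $\Sigma~\!\!P$ is locally hypercompact precisely when, for every $x$ and every Scott-open $U\ni x$, there is $\ua F\in\mathbf{Fin}P$ with $x\in\ii\,\ua F\subseteq\ua F\subseteq U$; this is verbatim condition (3) (with $\ii$ the Scott interior), and the bracketed reformulation ``$\sigma(P)$ is a hypercontinuous lattice'' is the frame-theoretic counterpart of local hypercompactness, exactly parallel to Lemma \ref{C-space=CD-topology} for $C$-spaces and completely distributive lattices. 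So (2) $\Leftrightarrow$ (3) needs only the unwinding of definitions, and the substance lies in relating these neighbourhood bases to the way-below relation on finite sets.

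For (3) $\Rightarrow$ (1), fix $x\in P$. The first step I would record is the topological reading of $\ll$: if $x\in\ii\,\ua F$ then $F\ll x$. This is because $\ii\,\ua F$ is a Scott-open upper set, so $\ua x\subseteq\ii\,\ua F$; hence for any $D\in\mathcal D(P)$ with $\vee D\geq x$ we get $\vee D\in\ii\,\ua F$, and Scott-openness forces $D\cap\ii\,\ua F\neq\emptyset$, i.e. $D\cap\ua F\neq\emptyset$. Granting this, filteredness of $\{\ua F : F\in w(x)\}$ follows by applying (3) to the Scott-open neighbourhood $\ii\,\ua F_{1}\cap\ii\,\ua F_{2}$ of $x$ to produce $F_{3}\ll x$ with $\ua F_{3}\subseteq\ua F_{1}\cap\ua F_{2}$. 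The inclusion $\ua x\subseteq\bigcap\{\ua F : F\in w(x)\}$ is clear once one checks $x\in\ua F$ whenever $F\ll x$ (apply the definition of $\ll$ to the directed set $D=\{x\}$). For the reverse inclusion, given $y\not\geq x$ I would use that $\da y$ is Scott closed (a directed set below $y$ has its sup below $y$), so $U:=P\setminus\da y$ is a Scott-open set containing $x$; condition (3) yields $F\ll x$ with $\ua F\subseteq U$, whence $y\notin\ua F$. This shows $\bigcap\{\ua F : F\in w(x)\}\subseteq\ua x$ and completes quasicontinuity.

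The harder direction is (1) $\Rightarrow$ (3). Fix $x\in U$ with $U$ Scott open. Quasicontinuity gives a filtered family $\{\ua F : F\in w(x)\}$ with $\bigcap\{\ua F : F\in w(x)\}=\ua x\subseteq U$, and the crux is to produce a single $F\in w(x)$ with $\ua F\subseteq U$. I would argue by contradiction: if $\ua F\not\subseteq U$ for every $F\in w(x)$, then, since $P\setminus U$ is a lower set, each $F\cap(P\setminus U)$ is a nonempty finite set and these again form a filtered family; Rudin's Lemma then supplies a directed set $D\subseteq P\setminus U$ meeting every $\ua(F\cap(P\setminus U))$. As $P\setminus U$ is Scott closed, $\vee D\in P\setminus U$, while $\vee D\in\ua F$ for each $F\in w(x)$ forces $\vee D\in\ua x$; thus $\vee D\in\ua x\cap(P\setminus U)$, contradicting $\ua x\subseteq U$. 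Once such an $F$ is in hand, it remains to see $x\in\ii\,\ua F$. For this I would show that $\{y\in P : F\ll y\}$ is Scott open: it is clearly an upper set, and if $F\ll\vee D$ for directed $D$, the interpolation property of quasicontinuous domains yields a finite $G$ with $F\ll G\ll\vee D$, so $D$ meets $\ua G\subseteq\{y : F\ll y\}$. Since $x\in\{y : F\ll y\}\subseteq\ua F\subseteq U$, this exhibits $x$ in the Scott interior of $\ua F$, giving (3).

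The main obstacles are concentrated in the (1) $\Rightarrow$ (3) step: the extraction of a single $F$ with $\ua F\subseteq U$ from a filtered intersection, which is exactly where Rudin's Lemma (rather than naive compactness) is indispensable, and the verification that $\{y : F\ll y\}$ is Scott open, which rests on the interpolation property for finite sets in a quasicontinuous domain. Both are standard but delicate tools; everything else reduces to directedness bookkeeping and the elementary observations that $\da y$ is Scott closed while $\ii\,\ua F$ is a Scott-open upper set.
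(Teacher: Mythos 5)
The paper gives no proof of this lemma at all --- it is quoted from the cited sources \cite{Gierz-Lawson-Stralka-1983, Heckmann-1992} --- so your attempt can only be measured against the standard argument, and it reproduces most of it correctly. Your (2) $\Leftrightarrow$ (3) is indeed purely definitional by Definition \ref{def-C-LHC-CC-spaces}(3), and your (1) $\Rightarrow$ (3) is sound: extracting a single $F\in w(x)$ with $\ua F\subseteq U$ by applying Rudin's Lemma (Corollary \ref{rudin}) to the Scott-closed lower set $P\setminus U$ is exactly the right mechanism, and establishing $x\in\ii\,\ua F$ via the Scott-openness of $\{y\in P: F\ll y\}$ is the standard conclusion. (The interpolation property you invoke there is itself a nontrivial Rudin-type fact about quasicontinuous domains; since the whole lemma is cited wholesale, quoting it is defensible, but a self-contained write-up would have to prove it.)

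The genuine gap is in (3) $\Rightarrow$ (1), at the filteredness of $\{\ua F: F\in w(x)\}$. You apply (3) to ``the Scott-open neighbourhood $\ii\,\ua F_{1}\cap\ii\,\ua F_{2}$ of $x$'', but for arbitrary $F_1, F_2\in w(x)$ you have no right to assert $x\in\ii\,\ua F_i$: your first recorded step proves $x\in\ii\,\ua F\Rightarrow F\ll x$, and it is the \emph{converse} that this step needs. Definition \ref{def-continuous-domain}(2) requires the full family $w(x)=\{F\in P^{(<\omega)}: F\ll x\}$ to be filtered, not merely the subfamily $B(x)=\{F: x\in\ii\,\ua F\}$ that condition (3) naturally produces, and in a general dcpo these can differ. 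The repair is a second application of Rudin's Lemma. Your argument does show that $B(x)$ is filtered and, via the Scott-closedness of $\da y$, that $\bigcap_{G\in B(x)}\ua G=\ua x$. Now given $F\ll x$ with $\ua F\neq P$ (the case $\ua F=P$ is trivial), suppose $\ua G\nsubseteq\ua F$ for every $G\in B(x)$; then each $\ua G$ meets the nonempty lower set $C=P\setminus\ua F$, so Corollary \ref{rudin} yields a directed $D\subseteq C$ with $D\cap\ua G\neq\emptyset$, hence $\vee D\in\ua G$, for all $G\in B(x)$; thus $\vee D\in\bigcap_{G\in B(x)}\ua G=\ua x$, and $F\ll x$ forces $D\cap\ua F\neq\emptyset$, contradicting $D\subseteq P\setminus\ua F$. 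Hence some $\ua G\subseteq\ua F$ with $G\in B(x)$, giving $x\in\ii\,\ua G\subseteq\ii\,\ua F$. With the equivalence $F\ll x\iff x\in\ii\,\ua F$ thus secured, your filteredness step goes through verbatim, and the rest of the proposal is correct.
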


 \section{Sober spaces and well-filtered spaces}\label{sober-space-and-well-filtered-spaces}

For a $T_0$ space $X$ and a nonempty subset $A$ of $X$, $A$ is \emph{irreducible} if for any $\{F_1, F_2\}\subseteq \mathcal C(X)$, $A \subseteq F_1\cup F_2$ implies $A \subseteq F_1$ or $A \subseteq  F_2$.  Denote by $\ir(X)$ (resp., $\ir_c(X)$) the set of all irreducible (resp., irreducible closed) subsets of $X$. Clearly, every subset of $X$ that is directed under $\leq_X$ is irreducible.

\begin{remark}\label{irr-open-charac}  For a $T_0$ space $X$ and a nonempty subset $A\subseteq X$, the following two conditions are equivalent:
\begin{enumerate}[\rm (1)]
\item $A\in \ir (X)$.
\item For any $U, V\in \mathcal O(X)$, $A\cap U\neq \emptyset$ and  $A\cap V\neq \emptyset$ imply $A\cap U\cap V\neq \emptyset$.
\end{enumerate}
\end{remark}

The following lemma on irreducible sets is well-known.

\begin{lemma}\label{irr-image}
	If $f : X \longrightarrow Y$ is continuous and $A\in\ir (X)$, then $f(A)\in \ir (Y)$.
\end{lemma}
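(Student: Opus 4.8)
The plan is to unwind the definition of irreducibility directly and exploit the fact that continuity lets me pull back closed (equivalently, open) sets from $Y$ to $X$. Since $A$ is nonempty, its image $f(A)$ is automatically nonempty, so the only thing requiring work is the splitting condition in the definition of $\ir(Y)$.

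First I would argue with the defining closed-set formulation. Suppose $F_1, F_2\in\mathcal C(Y)$ satisfy $f(A)\subseteq F_1\cup F_2$. Applying $f^{-1}$ and using that $f$ is continuous gives $A\subseteq f^{-1}(F_1\cup F_2)=f^{-1}(F_1)\cup f^{-1}(F_2)$, where both $f^{-1}(F_1)$ and $f^{-1}(F_2)$ lie in $\mathcal C(X)$ by continuity. Invoking the irreducibility of $A$ then yields $A\subseteq f^{-1}(F_1)$ or $A\subseteq f^{-1}(F_2)$; taking the direct image of whichever inclusion holds, and using $f(f^{-1}(F_i))\subseteq F_i$, gives $f(A)\subseteq F_1$ or $f(A)\subseteq F_2$. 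This is exactly what membership of $f(A)$ in $\ir(Y)$ requires. As an equivalent route one could instead use the open-set characterization of Remark~\ref{irr-open-charac}: given $U,V\in\mathcal O(Y)$ each meeting $f(A)$, their preimages $f^{-1}(U),f^{-1}(V)$ are open in $X$ and each meets $A$, so irreducibility of $A$ forces $A\cap f^{-1}(U)\cap f^{-1}(V)\neq\emptyset$, and pushing a common point forward shows $f(A)\cap U\cap V\neq\emptyset$.

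There is no genuine obstacle here: the whole content is that $f^{-1}$ carries the Boolean combinations appearing in the definition of irreducibility back to $X$ without disturbing closedness (resp.\ openness), after which irreducibility of $A$ in $X$ does all the work. The only points worth a moment's care are recording nonemptiness of $f(A)$ and keeping the inclusion $f(f^{-1}(B))\subseteq B$ in mind so that the final image inclusion is legitimate.
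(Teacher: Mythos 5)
Your proof is correct: the paper states this lemma as well-known and gives no proof, and your argument --- pulling back the closed sets $F_1,F_2$ along $f$, invoking irreducibility of $A$, and pushing forward via $f(f^{-1}(F_i))\subseteq F_i$, with the open-set variant via Remark~\ref{irr-open-charac} as an equivalent route --- is exactly the standard one the paper implicitly relies on. Nothing is missing, including the nonemptiness of $f(A)$, which you record explicitly.
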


A topological space $X$ is called \emph{sober}, if for any  $A\in\ir_c(X)$, there is a unique point $x\in X$ such that $A=\overline{\{x\}}$. It is straightforward to verify that every sober space is a $d$-space (cf. \cite{GHKLMS-2003}). For simplicity, if a dcpo or a complete lattice $P$ has a sober (resp., non-sober) Scott topology, then we will call $P$ a \emph{sober} (resp., \emph{non-sober}) \emph{dcpo} or a \emph{sober} (resp., \emph{non-sober}) \emph{complete lattice}.

The following two results are well-known.

\begin{proposition}\label{quasicontinuou-domain-Scott-topology-sober} (\cite{GHKLMS-2003, Gierz-Lawson-Stralka-1983}) For a quasicontinuous domain (especially, a continuous domain) $P$, $\Sigma ~\!\! P$ is sober.
\end{proposition}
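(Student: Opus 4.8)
The plan is to prove that for a quasicontinuous domain $P$, the Scott space $\Sigma\,P$ is sober by showing that every irreducible closed set is the closure of a point, and to do so by exhibiting the directed supremum that witnesses this. First I would take an arbitrary $A\in\ir_c(\Sigma P)$. Since $\Sigma P$ is a $T_0$ space, the required point, if it exists, is unique, so the whole content lies in \emph{existence}. The natural candidate for the point is $x=\bigvee A$, and the heart of the argument is to show that $A$ is directed (so that, because $P$ is a dcpo, this supremum exists) and that $A=\mathord{\downarrow}x=\overline{\{x\}}$.

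The key steps, in order, are as follows. First I would record that $A$ is a closed, hence a lower, set, so $\overline{\{x\}}=\mathord{\downarrow}x\subseteq A$ once we know $x\in A$; the reverse containment $A\subseteq\mathord{\downarrow}x$ is immediate from $x=\bigvee A$ provided the supremum exists. Thus everything reduces to proving that $A$ is directed and that $x=\bigvee A$ actually lies in $A$. To get directedness I would use the quasicontinuity of $P$ through Lemma~\ref{quasicontinuou-domain=Scott-topology-LHC}: given $a,b\in A$, I want an upper bound of $\{a,b\}$ inside $A$. The idea is to argue by contradiction using the irreducibility criterion of Remark~\ref{irr-open-charac}. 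Suppose $a,b\in A$ have no common upper bound in $A$; one produces two Scott-open sets each meeting $A$ but whose intersection misses $A$, contradicting irreducibility. The finitary neighbourhood bases $\mathord{\uparrow}F$ with $F\in w(x)$ supplied by quasicontinuity are exactly what let one separate the situation into open pieces: for each point of $A$ above $a$ (resp.\ $b$) one can interpolate a finite set below it and use the interior $\ii\,\mathord{\uparrow}F$ as an open set, and the filteredness of $\{\mathord{\uparrow}F:F\in w(x)\}$ keeps the construction coherent.

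Once $A$ is directed, since $P$ is a dcpo the supremum $x=\bigvee A$ exists, and $A\subseteq\mathord{\downarrow}x$ is then clear. To finish I must show $x\in A$. Here I would invoke that $A$ is Scott closed: its complement $U=P\setminus A$ is Scott open, and if $x$ were in $U$ then, since $x=\bigvee A$ is the sup of the directed set $A$ and $U$ is Scott open, condition (ii) in the definition of Scott open forces $A\cap U\neq\emptyset$, i.e.\ $A\not\subseteq A$, a contradiction. Hence $x\in A$, so $A=\mathord{\downarrow}x=\overline{\{x\}}$, completing the proof. The continuous-domain case is subsumed since every continuous domain is quasicontinuous.

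The main obstacle I expect is the directedness step. Proving that an irreducible Scott-closed subset of a quasicontinuous domain is directed is precisely where quasicontinuity is indispensable, and it is delicate: one must manufacture the separating Scott-open sets from the finite sets $F\ll$ (various points) while respecting the filteredness of the family $\{\mathord{\uparrow}F\}$ and the interpolation property of the way-below relation. The analogous statement fails for general dcpos, so the argument cannot avoid genuine use of Lemma~\ref{quasicontinuou-domain=Scott-topology-LHC}; getting the open sets to meet $A$ individually yet have intersection disjoint from $A$ is the technical crux, and it is where I would spend most of the care.
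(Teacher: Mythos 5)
Your global reduction is sound: for a dcpo $P$, $\Sigma~\!\!P$ is sober if and only if every $A\in \ir_c(\Sigma~\!\!P)$ is directed, since then $\vee A\in A$ (Scott-closed sets are closed under existing directed sups) and $A=\da\!\vee\! A=\overline{\{\vee A\}}$, with uniqueness from $T_0$. (The paper itself gives no proof of this proposition; it cites Gierz--Lawson--Stralka and \cite{GHKLMS-2003}, so the comparison is with the standard argument.) The problem is that your proposal defers the entire content to the directedness step, and the mechanism you sketch for it cannot work. From $b,c\in A$ with $\ua b\cap\ua c\cap A=\emptyset$ you want two Scott-open sets, each meeting $A$, whose intersection misses $A$. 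But every Scott-open set containing $b$ contains all of $\ua b$ (opens are upper sets), so for any candidate opens $U\ni b$, $V\ni c$ you only get $U\cap V\supseteq \ua b\cap \ua c$ --- an inclusion in the useless direction: $U\cap V\cap A$ may well be nonempty although $\ua b\cap\ua c\cap A=\emptyset$. Concretely, for all $F\in w(b)$ and $G\in w(c)$ the basic opens $\ii\,\ua F$ and $\ii\,\ua G$ supplied by Lemma~\ref{quasicontinuou-domain=Scott-topology-LHC} contain $b$, resp.\ $c$, hence meet $A$, and irreducibility then \emph{forces} $A\cap \ii\,\ua F\cap \ii\,\ua G\neq\emptyset$; the emptiness lives only in the filtered intersection $\bigcap_{F,G}\bigl(A\cap\ua F\cap\ua G\bigr)=A\cap\ua b\cap\ua c$. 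So a failure of directedness is never witnessed by two (or finitely many) open sets, no contradiction with Remark~\ref{irr-open-charac} can be manufactured this way, and what is genuinely needed is an infinitary compactness step. Indeed, for dcpos the directedness of all irreducible closed sets is \emph{equivalent} to sobriety, which is a warning that no such local two-open argument can suffice.

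That compactness step is exactly what the standard proof supplies, and notably it does not establish directedness of $A$ up front. One forms the family $\mathcal F=\{\ua F : F\in P^{(<\omega)},\ A\cap \ii\,\ua F\neq\emptyset\}$ and shows it is filtered: if $x\in A\cap\ii\,\ua F\cap\ii\,\ua G$, then since $\ua x=\bigcap\{\ua H : H\in w(x)\}$ with $\{\ua H : H\in w(x)\}$ filtered, a Rudin-type compactness lemma (a filtered intersection of finitely generated upper sets contained in an open set forces one member into that open set) yields $H\in w(x)$ with $\ua H\subseteq \ii\,\ua F\cap\ii\,\ua G$, and $x\in A\cap\ii\,\ua H$, so $\ua H\in\mathcal F$. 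Applying Rudin's Lemma (Corollary~\ref{rudin}) to the lower set $A$, which meets every member of $\mathcal F$, one gets a directed $D\subseteq A$ with $\mathcal F\subseteq \Diamond\da D$; put $x=\vee D\in A$ ($A$ is Scott closed). Finally $A\subseteq\da x$: if $a\in A$ and $a\nleq x$, quasicontinuity gives $F\in w(a)$ with $x\notin\ua F$, yet $a\in A\cap\ii\,\ua F$ puts $\ua F\in\mathcal F$, whence $\da D\cap\ua F\neq\emptyset$ and so $x\in\ua F$, a contradiction. Thus $A=\da x$, and the directedness you aimed at falls out afterwards ($x$ is a top element of $A$) rather than serving as the engine. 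Until your ``two separating opens'' sketch is replaced by this interpolation-plus-Rudin argument (or an equivalent well-filteredness argument, which would be circular here), the proposal has a genuine gap at its only substantive step.
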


\begin{proposition}\label{jointly-Scott-continuous-Scott-topology-sober} (\cite{GHKLMS-2003}) If $P$ is a dcpo and a sup semilattice (especially, a complete lattice)
such that the sup operation $\vee :\Sigma~\!\!P\times \Sigma~\!\! P \rightarrow \Sigma~\!\!P, (x, y)\mapsto x\vee y$, is continuous, then $\Sigma~\!\!P$ is a sober space.
\end{proposition}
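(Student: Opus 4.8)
The plan is to show that every irreducible closed subset of $\Sigma~\!\!P$ has a greatest element, and is therefore the closure of a point; uniqueness of that point then comes for free from the $T_0$ property. So I would fix $A\in\ir_c(\Sigma~\!\!P)$. Being irreducible, $A$ is nonempty, and being Scott closed, $A$ is a lower set that is closed under existing directed sups.

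The crucial step is to prove that $A$ is directed, and for this I would combine the joint Scott continuity of $\vee$ with the irreducibility of $A$. Given $a,b\in A$, I claim $a\vee b\in A$. Suppose not; then $a\vee b$ lies in the Scott-open set $P\setminus A$. By continuity of $\vee:\Sigma~\!\!P\times\Sigma~\!\!P\to\Sigma~\!\!P$, the preimage $\vee^{-1}(P\setminus A)$ is open in the product topology and contains $(a,b)$, so there are Scott-open sets $U\ni a$ and $V\ni b$ with $U\times V\subseteq\vee^{-1}(P\setminus A)$; that is, $u\vee v\notin A$ whenever $u\in U$ and $v\in V$. Now $a\in A\cap U$ and $b\in A\cap V$, so both intersections are nonempty, and by the irreducibility of $A$ (Remark~\ref{irr-open-charac}) we get $A\cap U\cap V\neq\emptyset$. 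Picking $w\in A\cap U\cap V$ and using the idempotency $w\vee w=w$ in the sup semilattice $P$, we would obtain $w=w\vee w\notin A$, contradicting $w\in A$. Hence $a\vee b\in A$, and since $A$ is nonempty this makes $A$ directed.

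With directedness established, the rest is routine. Since $P$ is a dcpo, $\vee A$ exists; since the complement of $A$ is Scott open and $A$ is directed, $\vee A\in A$ — otherwise $\vee A\in P\setminus A$ would force the directed set $A$ to meet $P\setminus A$, which is absurd. Thus $\vee A$ is the greatest element of $A$, so $A=\da(\vee A)=\overline{\{\vee A\}}$. The $T_0$ separation of $\Sigma~\!\!P$ guarantees this point is unique, which completes the argument.

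I expect the only real obstacle to be the directedness step: one has to keep in mind that the hypothesis is joint continuity with respect to the product topology $\Sigma~\!\!P\times\Sigma~\!\!P$ (not the possibly finer Scott topology $\Sigma~\!\!(P\times P)$), so the open neighbourhood of $(a,b)$ must be taken to contain a basic rectangle $U\times V$. The interplay of that rectangle with the irreducibility characterization and the idempotency of $\vee$ is exactly what yields $a\vee b\in A$; everything afterwards is bookkeeping with Scott-closed sets.
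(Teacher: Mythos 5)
Your proof is correct and is essentially the classical argument from \cite{GHKLMS-2003} that the paper cites for this proposition: show an irreducible Scott-closed set $A$ is directed by pulling back the Scott-open complement along the jointly continuous $\vee$, extracting a basic rectangle $U\times V$, and using irreducibility plus idempotency ($w=w\vee w$) to reach a contradiction, after which $\vee A\in A$ gives $A=\overline{\{\vee A\}}$. Indeed, the same rectangle-plus-irreducibility technique reappears in the paper's own proof of Theorem \ref{property-R-Scott-sober}, so your approach matches the intended one in all essentials.
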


For a sup semilattice $Q$, by Lemma \ref{Scott-cont-charac} it is easy to verify that the sup operation $\vee :\Sigma~\!\!(P\times P) \rightarrow \Sigma~\!\!P, (x, y)\mapsto x\vee y$, is always continuous, whence by Proposition \ref{jointly-Scott-continuous-Scott-topology-sober} we have the following.

\begin{corollary}\label{Scott-topology-on-product-is-Scott-topology-product-imply-sober} (\cite{GHKLMS-2003}) If $P$ is a dcpo and a sup semilattice (especially, a complete lattice)
such that $\Sigma~\!\!(P\times P)=\Sigma~\!\!P\times \Sigma~\!\! P$, then $\Sigma~\!\!P$ is sober.
\end{corollary}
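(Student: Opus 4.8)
The plan is to deduce this corollary from Proposition~\ref{jointly-Scott-continuous-Scott-topology-sober} together with the hypothesis $\Sigma~\!\!(P\times P)=\Sigma~\!\!P\times\Sigma~\!\!P$. The one thing that needs checking is the claim already flagged in the paragraph preceding the statement: that for a dcpo and sup semilattice $P$ the join map $\vee:\Sigma~\!\!(P\times P)\to\Sigma~\!\!P$, $(x,y)\mapsto x\vee y$, is always Scott continuous. By Lemma~\ref{Scott-cont-charac} this is equivalent to $\vee$ preserving directed sups, so I would take a directed set $D\subseteq P\times P$ whose sup exists and verify $\bigvee\{x\vee y:(x,y)\in D\}=(\bigvee\pi_1(D))\vee(\bigvee\pi_2(D))$, where $\pi_1,\pi_2$ denote the coordinate projections and the right-hand side equals $\bigvee D$ because directed sups in $P\times P$ are formed coordinatewise (these coordinate sups exist since $P$ is a dcpo).

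For that equality I would argue by antisymmetry. First, $\{x\vee y:(x,y)\in D\}$ is directed: given two members, a common upper bound $(x_3,y_3)\in D$ of the corresponding pairs yields $x_3\vee y_3$ above both, by monotonicity of $\vee$. Hence its sup $s$ exists in the dcpo $P$. Writing $a=\bigvee\pi_1(D)$ and $b=\bigvee\pi_2(D)$, the inequality $s\le a\vee b$ is immediate from $x\le a$ and $y\le b$ for each $(x,y)\in D$. Conversely, $x\le x\vee y\le s$ for every $(x,y)\in D$ forces $a\le s$, and symmetrically $b\le s$, so $a\vee b\le s$. Thus $s=a\vee b$, and $\vee:\Sigma~\!\!(P\times P)\to\Sigma~\!\!P$ is continuous.

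The corollary then closes in a single formal step. Because the hypothesis asserts the equality of topological spaces $\Sigma~\!\!(P\times P)=\Sigma~\!\!P\times\Sigma~\!\!P$, the continuous map $\vee:\Sigma~\!\!(P\times P)\to\Sigma~\!\!P$ is literally the same map as $\vee:\Sigma~\!\!P\times\Sigma~\!\!P\to\Sigma~\!\!P$, so the latter is continuous as well. Proposition~\ref{jointly-Scott-continuous-Scott-topology-sober} now applies verbatim and yields that $\Sigma~\!\!P$ is sober.

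I expect no real obstacle: the content is entirely the routine verification above, and the decisive input—turning continuity out of $\Sigma~\!\!(P\times P)$ into continuity out of $\Sigma~\!\!P\times\Sigma~\!\!P$—is handed to us by the hypothesis. The only point demanding attention is confirming that directed sups in the product are coordinatewise and that all the intermediate sups ($s$, $a$, $b$) exist, both of which rest solely on $P$ being directed complete.
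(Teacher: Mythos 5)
Your proof is correct and takes essentially the same route as the paper: the paper likewise deduces the corollary from Proposition~\ref{jointly-Scott-continuous-Scott-topology-sober}, after observing (via Lemma~\ref{Scott-cont-charac}) that for a sup semilattice the map $\vee:\Sigma~\!\!(P\times P)\rightarrow\Sigma~\!\!P$ is always continuous, so that the hypothesis $\Sigma~\!\!(P\times P)=\Sigma~\!\!P\times\Sigma~\!\!P$ makes it continuous as a map from the product space. Your explicit verification that directed sups in $P\times P$ are formed coordinatewise and are preserved by $\vee$ merely fills in the step the paper dismisses as ``easy to verify.''
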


\begin{remark}\label{examp-Isbell-not-sober}  (1) In \cite{Isbell-1982} (see also \cite{Isbell-1985}), Isbell constructed a complete lattice $L$ whose Scott space is non-sober. By Proposition \ref{jointly-Scott-continuous-Scott-topology-sober}, $\sigma (L\times L)$ is strictly
finer than the product topology $\sigma (L)\times \sigma (L)$, and the sup operation $\vee :\Sigma~\!\!(L\times \Sigma~\!\! L) \rightarrow \Sigma~\!\!L, (x, y)\mapsto x\vee y$, is not continuous.

(2)  In \cite{Isbell-1985}, Isbell showed that there exists a complete
lattice $Q$ for which $\Sigma~\!\!Q$ is sober but the sup operation $\vee :\Sigma~\!\!Q\times \Sigma~\!\! Q \rightarrow \Sigma~\!\!Q, (x, y)\mapsto x\vee y$, is not continuous (whence $\sigma (Q\times Q)\neq \sigma (Q)\times \sigma (Q)$).

(3) In \cite{Hertling-2022}, Hertling presented a complete lattice $Z$ such that the Scott topology on $Z \times Z$ is strictly
finer than the product topology but the sup operation $\vee :\Sigma~\!\!Z\times \Sigma~\!\! Z \rightarrow \Sigma~\!\!Z, (x, y)\mapsto x\vee y$, is continuous.
\end{remark}

For a $T_0$ space $X$, we shall use $\mathord{\mathsf{K}}(X)$ to
denote the set of all nonempty compact saturated subsets of $X$ and endow it with the \emph{Smyth order}, that is, for $K_1,K_2\in \mathord{\mathsf{K}}(X)$, $K_1\sqsubseteq K_2$ if{}f $K_2\subseteq K_1$. Let $S^u(X)=\{\ua x : x\in X\}$. In what follows, the poset $\mk (X)$ is always equipped with the Smyth order. The space $X$ is called \emph{coherent} if the intersection of any two compact saturated sets is compact.

For a subset $G$ of a $T_0$ space $X$, let $\Diamond G=\{K\in \mathsf{K}(X) : K\cap G\neq\emptyset\}$ and $\Box G=\{K\in \mathsf{K}(X) : K\subseteq G\}$. The \emph{upper Vietoris topology} on $\mathsf{K}(X)$ is the topology that has $\{\Box U : U\in \mathcal O(X)\}$ as a base, and the resulting space, denoted by $P_S(X)$, is called the \emph{Smyth power space} or \emph{upper space} of $X$ (cf. \cite{Heckmann-1992, Schalk-1993}). It is easy to verify that the specialization order on $P_S(X)$ is the Smyth order (that is, $\leq_{P_S(X)}=\sqsubseteq$). Clearly, $S^u(X)$, as a subspace of $P_S(X)$, is homeomorphic to $X$.

It is straightforward to verify the following (cf. \cite{Heckmann-1992, Heckmann-Keimel-2013, Schalk-1993}).

\begin{lemma}\label{xi-continuous}
	For a $T_0$ space $X$, then the canonical mapping $\xi_X : X\longrightarrow P_S(X), x\mapsto\uparrow x$, is a topological embedding.
\end{lemma}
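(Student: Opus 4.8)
The plan is to prove that the canonical map $\xi_X : X \longrightarrow P_S(X)$, $x \mapsto \ua x$, is a topological embedding by establishing three things: that $\xi_X$ is well-defined and injective, that it is continuous, and that it is open onto its image $S^u(X)$. First I would check well-definedness: for each $x \in X$, the set $\ua x$ (the saturation of $\{x\}$ in the specialization order) is nonempty, saturated, and compact, since any open cover of $\ua x$ must contain an open set meeting $x$, and such an open set is an upper set containing all of $\ua x$; hence $\ua x \in \mk(X)$ and $\xi_X$ lands in $P_S(X)$. Injectivity follows from the $T_0$ separation axiom: if $\ua x = \ua y$ then $x \leq_X y$ and $y \leq_X x$, so $x = y$.

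For continuity, I would use the fact that the sets $\Box U$ with $U \in \mathcal O(X)$ form a base for the upper Vietoris topology. The computation is $\xi_X^{-1}(\Box U) = \{x \in X : \ua x \subseteq U\}$, and since $U$ is an upper set, $\ua x \subseteq U$ holds exactly when $x \in U$. Thus $\xi_X^{-1}(\Box U) = U$, which is open, so $\xi_X$ is continuous.

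For the embedding property it remains to show $\xi_X$ is open as a map onto the subspace $S^u(X) \subseteq P_S(X)$, equivalently that $\xi_X$ carries a base of $\mathcal O(X)$ to relatively open sets. Here I would again use $U \in \mathcal O(X)$ and show $\xi_X(U) = \Box U \cap S^u(X)$: indeed $\ua x \in \Box U$ iff $\ua x \subseteq U$ iff $x \in U$ (using that $U$ is upper), so the image of $U$ is precisely the trace of the basic open set $\Box U$ on $S^u(X)$. This exhibits $\xi_X$ as a homeomorphism onto $S^u(X)$.

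I do not expect a serious obstacle here, as the whole statement is a routine unwinding of definitions; the only point requiring a little care is the repeated use of the fact that Scott-open (more generally, open) sets are upper sets in the specialization order, which legitimizes replacing the condition $\ua x \subseteq U$ by $x \in U$. The compactness of $\ua x$, needed only to place $\xi_X$ inside $P_S(X)$ in the first place, is the one step where one invokes that open sets are saturated-respecting rather than a purely formal manipulation, but it too is immediate.
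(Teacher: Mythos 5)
Your proof is correct and is precisely the routine verification the paper leaves implicit (it states the lemma as ``straightforward to verify'' with references and gives no proof): well-definedness via the supercompactness of $\ua x$, injectivity from $T_0$, and the computation $\xi_X^{-1}(\Box U)=U$ together with $\xi_X(U)=\Box U\cap S^u(X)$, which exhibits $\xi_X$ as a homeomorphism onto $S^u(X)$. No gaps; your care in using that open sets are upper in the specialization order is exactly the right point to flag.
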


\begin{lemma}\label{KX-sup}  For a nonempty family $\{K_i : i\in I\}\subseteq \mk (X)$, $\bigvee_{i\in I} K_i$ exists in $\mk (X)$ if{}f~$\bigcap_{i\in I} K_i\in \mk (X)$. In this case $\bigvee_{i\in I} K_i=\bigcap_{i\in I} K_i$.
\end{lemma}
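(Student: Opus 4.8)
The plan is to unwind the Smyth order into containment statements and then handle the two implications separately, with the reverse direction carrying the real content. Recall that the order on $\mk(X)$ is reverse inclusion: $K_1\sqsubseteq K_2$ iff $K_2\subseteq K_1$. Consequently a set $K\in\mk(X)$ is an \emph{upper bound} of $\{K_i : i\in I\}$ precisely when $K_i\sqsubseteq K$ for every $i$, i.e.\ $K\subseteq K_i$ for every $i$, i.e.\ $K\subseteq\bigcap_{i\in I}K_i$; and $K$ is the \emph{least} upper bound precisely when, in addition, $K$ contains every other such $K'$ (since $K\sqsubseteq K'$ reads $K'\subseteq K$). I would also record at the outset that an arbitrary intersection of saturated sets is saturated, so $\bigcap_{i\in I}K_i$ is automatically an upper set; the only thing that can fail is nonemptiness or compactness. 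Finally, for any $x\in X$ the set $\ua x$ is a member of $\mk(X)$, being nonempty, saturated, and compact (the saturation of the compact singleton $\{x\}$).

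For the easy direction, assume $\bigcap_{i\in I}K_i\in\mk(X)$. Then $\bigcap_{i\in I}K_i\subseteq K_i$ for each $i$, so it is an upper bound; and any upper bound $K'$ satisfies $K'\subseteq\bigcap_{i\in I}K_i$ by the translation above, which says exactly $\bigcap_{i\in I}K_i\sqsubseteq K'$. Hence $\bigcap_{i\in I}K_i$ is the least upper bound, so $\bigvee_{i\in I}K_i$ exists and equals $\bigcap_{i\in I}K_i$.

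For the converse, suppose $K:=\bigvee_{i\in I}K_i$ exists in $\mk(X)$; I want to show $K=\bigcap_{i\in I}K_i$, which simultaneously forces the intersection to lie in $\mk(X)$. Since $K$ is an upper bound, $K\subseteq\bigcap_{i\in I}K_i$ as noted. For the reverse containment I would test with principal upper sets: take any $x\in\bigcap_{i\in I}K_i$. As each $K_i$ is saturated, $\ua x\subseteq K_i$ for every $i$, so $K_i\sqsubseteq\ua x$ for every $i$, making $\ua x\in\mk(X)$ an upper bound of the family. Minimality of $K$ then gives $K\sqsubseteq\ua x$, i.e.\ $\ua x\subseteq K$, so in particular $x\in K$. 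As $x$ was arbitrary, $\bigcap_{i\in I}K_i\subseteq K$, whence $K=\bigcap_{i\in I}K_i\in\mk(X)$.

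The main obstacle is precisely this reverse implication: the existence of a least upper bound in $\mk(X)$ gives only a compact saturated set below each $K_i$, and one must promote this to the statement that the full set-theoretic intersection is compact. The device that makes it work is the observation that every point of the intersection is itself realized by an element $\ua x$ of $\mk(X)$ that is an upper bound, so minimality of the supremum captures each such point. I expect the only points requiring care to be the correct bookkeeping of the order reversal and the verification that $\ua x\in\mk(X)$; both are routine once stated explicitly.
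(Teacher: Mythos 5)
Your proof is correct and follows essentially the same route as the paper's: in the hard direction, both arguments test the least upper bound against the principal sets $\ua x$ for $x\in\bigcap_{i\in I}K_i$, using leastness to conclude $\ua x\subseteq\bigvee_{i\in I}K_i$ and hence that the supremum coincides with the intersection. Your write-up is merely slightly more explicit about the order translation and the membership $\ua x\in\mk(X)$, which the paper leaves tacit.
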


\begin{proof} Suppose that $\{K_i : i\in I\}\subseteq \mk (X)$ is a nonempty family and $\bigvee_{i\in I} K_i$ exists in $\mk (X)$. Let $K=\bigvee_{i\in I} K_i$. Then $K\subseteq K_i$ for all $i\in I$, and hence $K\subseteq \bigcap_{i\in I} K_i$. For any $x\in \bigcap_{i\in I} K_i$, $\ua x$ is a upper bound of $\{K_i : i\in I\}\subseteq \mk (X)$, whence $K\sqsubseteq \ua x$ or, equivalently, $\ua x \subseteq K$. Therefore, $\bigcap_{i\in I} K_i\subseteq K$. Thus $\bigcap_{i\in I} K_i=K\in \mk (X)$.

Conversely, if $\bigcap_{i\in I} K_i\in \mk (X)$, then $\bigcap_{i\in I} K_i$ is an upper bound of $\{K_i : i\in I\}$ in $\mk (X)$. Let $G\in \mk (X)$ be another upper bound of $\{K_i : i\in I\}$, then $G\subseteq K_i$ for all $i\in I$, and hence $G\subseteq \bigcap_{i\in I} K_i$, that is, $\bigcap_{i\in I} K_i\sqsubseteq G$, proving that $\bigvee_{i\in I} K_i=\bigcap_{i\in I} K_i$.
\end{proof}

For the sobriety of Smyth power spaces, we have the following important result.

\begin{theorem}\label{Smyth-sober} \emph{(\cite{Heckmann-Keimel-2013, Schalk-1993})} For a $T_0$ space $X$, the following conditions are equivalent:
\begin{enumerate}[\rm (1)]
            \item $X$ is sober.
            \item  For any $\mathcal A\subseteq \ir (P_S(X))$ and $U\in \mathcal O(X)$, $\bigcap \mathcal A\subseteq U$ implies $K\subseteq U$ for some $K\in\mathcal A$.
            \item  For any $\mathcal A\subseteq \ir_c (P_S(X))$ and $U\in \mathcal O(X)$, $\bigcap \mathcal A\subseteq U$ implies $K\subseteq U$ for some $K\in\mathcal A$.
            \item $P_S(X)$ is sober.
\end{enumerate}
\end{theorem}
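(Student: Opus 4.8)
The plan is to establish the cycle (1) $\Rightarrow$ (2) $\Rightarrow$ (3) $\Rightarrow$ (4) $\Rightarrow$ (1), in which (2) $\Rightarrow$ (3) is immediate because every irreducible closed subset of $P_S(X)$ is irreducible. The computational backbone I would set up first is a dictionary between irreducible subsets of $P_S(X)$ and filters of open sets: for an irreducible subset $\mathcal A$ of $P_S(X)$ (that is, $\mathcal A\subseteq\mk (X)$ with $\mathcal A\in\ir (P_S(X))$), put $\Phi_{\mathcal A}=\{U\in\mathcal O(X) : K\subseteq U \text{ for some } K\in\mathcal A\}$. Using $\Box U\cap\Box V=\Box (U\cap V)$ together with Remark \ref{irr-open-charac}, one checks that $\Phi_{\mathcal A}$ is a filter of $\mathcal O(X)$, and since every $K\in\mathcal A$ is compact it is in fact a Scott-open filter. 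A short saturation argument (each member of $\mk (X)$ is the intersection of its open neighbourhoods) then gives the identity $\bigcap\mathcal A=\bigcap\Phi_{\mathcal A}$, which I will use repeatedly.

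For (1) $\Rightarrow$ (2), I would assume $X$ sober and take an irreducible $\mathcal A$ with $\bigcap\mathcal A\subseteq U$. Here is where soberness is genuinely used, and it is the main obstacle: by the Hofmann--Mislove theorem the Scott-open filter $\Phi_{\mathcal A}$ is the neighbourhood filter of the compact saturated set $Q=\bigcap\Phi_{\mathcal A}=\bigcap\mathcal A$, that is, $\Phi_{\mathcal A}=\{V\in\mathcal O(X) : Q\subseteq V\}$. Then $\bigcap\mathcal A=Q\subseteq U$ forces $U\in\Phi_{\mathcal A}$, which is precisely the existence of some $K\in\mathcal A$ with $K\subseteq U$. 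Everything in this step rests on the correspondence between Scott-open filters and compact saturated sets; rather than cite it as a black box I could reprove what is needed through the completely-prime-filter description of sobriety, extending $\Phi_{\mathcal A}$ by Zorn's lemma to a maximal Scott-open filter missing $U$, showing that filter is completely prime, and reading off the resulting point in $Q\setminus U$.

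For (3) $\Rightarrow$ (4), I would take $\mathcal A$ irreducible closed in $P_S(X)$ and propose $K_0=\bigcap\mathcal A$ as its generic point (unique since $P_S(X)$ is $T_0$, the Smyth order being antisymmetric). The set $K_0$ is saturated as an intersection of upper sets; applying (3) with the empty open set shows $K_0\neq\emptyset$, and applying (3) to $U=\bigcup_i U_i$ for an open cover of $K_0$ produces a single $K\in\mathcal A$ with $K\subseteq\bigcup_i U_i$, whose compactness yields a finite subcover of $K_0$, so $K_0\in\mk (X)$. Since each member of $\mathcal A$ contains $K_0$, we get $\mathcal A\subseteq\{K : K_0\subseteq K\}=\overline{\{K_0\}}$; conversely, for every open $U\supseteq K_0$ condition (3) gives $K\in\mathcal A$ with $K\subseteq U$, so the basic neighbourhood $\Box U$ meets $\mathcal A$, whence $K_0\in\mathcal A$ and, $\mathcal A$ being closed, $\overline{\{K_0\}}\subseteq\mathcal A$. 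Thus $\mathcal A=\overline{\{K_0\}}$ and $P_S(X)$ is sober.

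For (4) $\Rightarrow$ (1), I would start from $A\in\ir_c(X)$. By Lemma \ref{irr-image} and Lemma \ref{xi-continuous} the image $\xi_X(A)$ is irreducible, so its closure $\mathcal A$ in $P_S(X)$ is irreducible closed and, by (4), $\mathcal A=\overline{\{K_0\}}$ for a unique $K_0\in\mk (X)$. The crucial point is that $K_0$ is supercompact: if $K_0\subseteq\bigcup_i U_i$, then $K_0\in\mathcal A=\overline{\xi_X(A)}$ provides $a\in A$ with $\ua a\subseteq\bigcup_i U_i$, so $a\in U_{i_0}$ for some $i_0$, while $\xi_X(A)\subseteq\overline{\{K_0\}}$ gives $K_0\subseteq\ua a$, hence $K_0\subseteq\ua a\subseteq U_{i_0}$ as $U_{i_0}$ is an upper set. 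By Lemma \ref{supercompact-set-charac}, $K_0=\ua x_0$ for some $x_0\in X$, so $K_0\in S^u(X)$. Intersecting $\mathcal A=\overline{\{K_0\}}$ with the subspace $S^u(X)\cong X$ and using that $\xi_X$ is an embedding then gives $A=\overline{\{x_0\}}$, with uniqueness from $T_0$-ness of $X$, completing the cycle.
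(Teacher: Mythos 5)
Your proof is correct, but note that the paper does not actually prove Theorem \ref{Smyth-sober}: it is quoted from \cite{Heckmann-Keimel-2013, Schalk-1993} with no argument supplied, so there is no in-paper proof to match. Your cycle (1) $\Rightarrow$ (2) $\Rightarrow$ (3) $\Rightarrow$ (4) $\Rightarrow$ (1) checks out: the dictionary $\Phi_{\mathcal A}$ is indeed a proper Scott-open filter (irreducibility plus $\Box U\cap\Box V=\Box(U\cap V)$ gives the filter property, compactness of members gives Scott-openness), the identity $\bigcap\mathcal A=\bigcap\Phi_{\mathcal A}$ follows from saturation exactly as you say, and your (3) $\Rightarrow$ (4) and (4) $\Rightarrow$ (1) steps --- in particular the supercompactness of the generic point $K_0$ and the appeal to Lemma \ref{supercompact-set-charac} to land back in $S^u(X)$ --- are sound (you use silently that the closure of an irreducible set is irreducible, which is immediate from Remark \ref{irr-open-charac}). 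The one genuinely nontrivial input is the Hofmann--Mislove theorem in (1) $\Rightarrow$ (2); your fallback sketch (Zorn on Scott-open filters missing $U$, maximality forces primeness, Scott-openness upgrades prime to completely prime, sobriety reads off a point of $\bigcap\Phi_{\mathcal A}\setminus U$) is the standard Keimel--Paseka argument and would work. It is worth knowing that there is an alternative route to (1) $\Rightarrow$ (2) that avoids Hofmann--Mislove and uses only the paper's own toolkit, namely the topological Rudin Lemma (Lemma \ref{topological-Rudin-lemma}): if no $K\in\mathcal A$ lies in $U$, then $X\setminus U$ meets every member of $\mathcal A$, so it contains a minimal irreducible closed set $A$ meeting all members; sobriety gives $A=\overline{\{x\}}$, and since each $K\in\mathcal A$ is an upper set meeting $\da x$ we get $x\in\bigcap\mathcal A\subseteq U$, contradicting $x\in X\setminus U$. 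That route is shorter and more elementary; yours buys the filter-theoretic correspondence $\mathcal A\mapsto\Phi_{\mathcal A}$, which is closer in spirit to the original development in \cite{Heckmann-Keimel-2013}. Either way, no gap.
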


   Rudin's Lemma is a useful tool in topology and plays a crucial role in domain theory (see [7-11, 13, 33]). In \cite{Heckmann-Keimel-2013}, Heckmann and Keimel presented the following topological variant of Rudin's Lemma.

\begin{lemma}\label{topological-Rudin-lemma} \emph{(Topological Rudin Lemma)} Let $X$ be a topological space and $\mathcal{A}$ an
irreducible subset of the Smyth power space $P_S(X)$. Then every closed set $C {\subseteq} X$  that
meets all members of $\mathcal{A}$ contains a minimal irreducible closed subset $A$ that still meets all
members of $\mathcal{A}$.
\end{lemma}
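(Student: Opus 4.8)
The plan is to construct $A$ by a Zorn's Lemma argument and then deduce its irreducibility from the hypothesis that $\mathcal{A}$ is irreducible in $P_S(X)$. First I would set up the poset to which Zorn applies: let $\mathcal{F}$ be the family of all closed subsets $F\subseteq C$ that still meet every member of $\mathcal{A}$ (i.e.\ $F\cap K\neq\emptyset$ for all $K\in\mathcal{A}$), ordered by reverse inclusion, so that a maximal element of $\mathcal{F}$ is exactly a minimal such closed set. Since $C$ itself belongs to $\mathcal{F}$, the family is nonempty.

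The chain condition is where compactness enters. Given a chain $\mathcal{C}\subseteq\mathcal{F}$, I would take $F_0=\bigcap\mathcal{C}$, which is closed, and check $F_0\in\mathcal{F}$. Fixing $K\in\mathcal{A}$, the sets $\{F\cap K : F\in\mathcal{C}\}$ form a family of nonempty closed subsets of $K$ that is linearly ordered by inclusion, hence has the finite intersection property; since $K$ is compact (being a compact saturated set), $\bigcap_{F\in\mathcal{C}}(F\cap K)=F_0\cap K\neq\emptyset$. Thus $F_0$ meets every member of $\mathcal{A}$ and is an upper bound for $\mathcal{C}$ in the reverse-inclusion order, so Zorn's Lemma yields a minimal element $A\in\mathcal{F}$.

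It then remains to show $A$ is irreducible, where I note first that $A$ is nonempty because it meets the (nonempty, by definition) family $\mathcal{A}$. Suppose instead $A=A_1\cup A_2$ with $A_1,A_2$ closed and properly contained in $A$. By minimality of $A$, neither $A_1$ nor $A_2$ can meet all members of $\mathcal{A}$, so there are $K_1,K_2\in\mathcal{A}$ with $A_1\cap K_1=\emptyset$ and $A_2\cap K_2=\emptyset$; equivalently $K_i\in\Box(X\setminus A_i)$, so $\mathcal{A}$ meets both open sets $\Box(X\setminus A_1)$ and $\Box(X\setminus A_2)$ of $P_S(X)$. Here the irreducibility of $\mathcal{A}$ is invoked, via the open-set characterization in Remark \ref{irr-open-charac}: $\mathcal{A}\cap\Box(X\setminus A_1)\cap\Box(X\setminus A_2)\neq\emptyset$. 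Using the identity $\Box U\cap\Box V=\Box(U\cap V)$, this intersection equals $\mathcal{A}\cap\Box(X\setminus A)$, so some $K\in\mathcal{A}$ satisfies $K\cap A=\emptyset$, contradicting $A\in\mathcal{F}$. Hence $A$ is irreducible, and since it is minimal among \emph{all} closed subsets of $C$ meeting $\mathcal{A}$ it is a fortiori minimal among the irreducible ones.

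The main obstacle is the chain step: one must intersect against a single compact member $K\in\mathcal{A}$ and apply the finite intersection property there, rather than hoping $C$ itself is compact (which is not assumed). Everything else is bookkeeping with the base $\{\Box U : U\in\mathcal{O}(X)\}$ of the upper Vietoris topology together with the open-set reformulation of irreducibility.
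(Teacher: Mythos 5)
Your proof is correct and is essentially the standard argument: the paper states this lemma without proof, citing Heckmann and Keimel \cite{Heckmann-Keimel-2013}, and their proof proceeds exactly as you do --- Zorn's Lemma on the closed subsets of $C$ meeting all members of $\mathcal{A}$, with the chain step secured by the finite intersection property inside a single compact $K\in\mathcal{A}$, and irreducibility of the minimal element extracted from irreducibility of $\mathcal{A}$ via the basic open sets $\Box(X\setminus A_i)$ and the identity $\Box U\cap\Box V=\Box(U\cap V)$. All steps check out, including the correct observation that minimality among all closed sets meeting $\mathcal{A}$ implies minimality among the irreducible ones.
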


  Applying Lemma \ref{topological-Rudin-lemma} to the Alexandroff topology on a poset $P$, one obtains the original Rudin's Lemma (see \cite{Rudin-1981}).

\begin{corollary}\label{rudin} \emph{(Rudin's Lemma)} Let $P$ be a poset, $C$ a nonempty lower subset of $P$ and $\mathcal F\in \mathbf{Fin} P$ a filtered family with $\mathcal F\subseteq\Diamond C$. Then there exists a directed subset $D$ of $C$ such that $\mathcal F\subseteq \Diamond\da D$.
\end{corollary}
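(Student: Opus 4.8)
The plan is to specialize the Topological Rudin Lemma (Lemma~\ref{topological-Rudin-lemma}) to the space $X=(P,\alpha(P))$, where $\alpha(P)$ is the upper Alexandroff topology, and then translate the topological conclusion back into order-theoretic language. The entire argument rests on identifying the order-theoretic objects in the statement with their topological counterparts in this particular topology, so I would first assemble the necessary dictionary.

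In $(P,\alpha(P))$ the open sets are exactly the upper sets, hence the closed sets are exactly the lower sets; in particular the given nonempty lower set $C$ is closed. A saturated set is an upper set, and an upper set $U$ is compact iff it is finitely generated, since the open cover $\{\ua x : x\in U\}$ admits a finite subcover precisely when $U=\ua F$ for some finite $F\subseteq U$. Therefore $\mk(P,\alpha(P))=\mathbf{Fin}\,P$, so $\mathcal F$ is literally a family of nonempty compact saturated subsets of $X$. Finally, an irreducible closed subset $A$ of $(P,\alpha(P))$ is exactly a directed lower set: being a lower set it is closed, and for $a,b\in A$ the open sets $\ua a,\ua b$ meet $A$, so by the characterization of irreducibility (Remark~\ref{irr-open-charac}) $\ua a\cap\ua b$ meets $A$, producing a common upper bound of $a,b$ inside $A$; conversely a directed set is always irreducible.

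Next I would verify that $\mathcal F$ is an irreducible subset of the Smyth power space $P_S(X)$. The specialization order of $P_S(X)$ is the Smyth order $\sqsubseteq$, and a family $\mathcal F$ filtered under inclusion is directed under $\sqsubseteq$: if $K_3\subseteq K_1\cap K_2$ then $K_1,K_2\sqsubseteq K_3$. Since every subset directed in the specialization order is irreducible, $\mathcal F\in\ir(P_S(X))$. Moreover the hypothesis $\mathcal F\subseteq\Diamond C$ says precisely that the closed set $C$ meets every member of $\mathcal F$.

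With these preparations, applying Lemma~\ref{topological-Rudin-lemma} yields a minimal irreducible closed subset $A\subseteq C$ that still meets every member of $\mathcal F$. By the dictionary above, $A$ is a directed lower set, so setting $D=A$ gives a directed $D\subseteq C$ with $\da D=A$; since $A$ meets each $\ua F\in\mathcal F$ we conclude $\mathcal F\subseteq\Diamond\da D$, as required. I expect the only substantive steps to be the two identifications in the dictionary — compact saturated sets as $\mathbf{Fin}\,P$ and irreducible closed sets as ideals — together with the passage from \emph{filtered} to \emph{irreducible}. Once these are established the result is an immediate instance of the topological lemma, and it is worth noting that the minimality of $A$ is not even needed for the stated conclusion.
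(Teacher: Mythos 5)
Your proof is correct and takes exactly the paper's approach: the paper proves this corollary with the one-line remark that one applies the Topological Rudin Lemma (Lemma \ref{topological-Rudin-lemma}) to the Alexandroff topology on $P$, and your ``dictionary'' (upper sets as opens, $\mathbf{Fin}\,P$ as the nonempty compact saturated sets, directed lower sets as the irreducible closed sets, and filtered $\Rightarrow$ directed in the Smyth order $\sqsubseteq$, hence irreducible in $P_S(X)$) supplies precisely the details the paper leaves implicit. All the identifications check out against the paper's definitions, and your closing observation that the minimality of $A$ is not needed for the stated conclusion is also accurate.
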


\begin{proposition}\label{X-irreducible-sober-charac}  Let $X$ be a $T_0$ space for which $\ir_c(X)=\{\overline{\{x\}} : x\in X\}\cup\{X\}$. Then the following two conditions are equivalent:
\begin{enumerate}[\rm (1)]
\item $X$ is sober.
\item For any $\mathcal A\in \ir(P_S(X))$, $\bigcap \mathcal A\neq\emptyset$.
\end{enumerate}
\end{proposition}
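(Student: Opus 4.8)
The plan is to establish the two implications separately, noting at the outset that $(1)\Rightarrow(2)$ holds for every $T_0$ space and that the structural hypothesis $\ir_c(X)=\{\overline{\{x\}}:x\in X\}\cup\{X\}$ is needed only for $(2)\Rightarrow(1)$.

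For $(1)\Rightarrow(2)$, I would argue by contradiction using the Smyth power space characterization. Let $\mathcal A\in\ir(P_S(X))$ and suppose $\bigcap\mathcal A=\emptyset$. Since $X$ is sober, Theorem \ref{Smyth-sober} tells us that for any open $U$, the inclusion $\bigcap\mathcal A\subseteq U$ forces $K\subseteq U$ for some $K\in\mathcal A$. Taking $U=\emptyset$, we would obtain $K\in\mathcal A$ with $K\subseteq\emptyset$. But every member of $\mk(X)$ is nonempty, so this is impossible, and hence $\bigcap\mathcal A\neq\emptyset$. This direction uses only Theorem \ref{Smyth-sober} and not the hypothesis on $\ir_c(X)$.

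For $(2)\Rightarrow(1)$, the first observation is that the equality $\ir_c(X)=\{\overline{\{x\}}:x\in X\}\cup\{X\}$ forces $X\in\ir_c(X)$, so $X$ is itself irreducible, and moreover the only irreducible closed set that might fail to be a point closure is $X$ itself. Thus it suffices to show $X=\overline{\{x_0\}}$ for some $x_0$; sobriety then follows, since point closures always carry a generic point, unique by the $T_0$ axiom. The key idea is to test condition (2) on the family $\mathcal A=S^u(X)=\xi_X(X)$. Because $X$ is irreducible and $\xi_X:X\to P_S(X)$ is continuous (Lemma \ref{xi-continuous}), its image $S^u(X)$ is irreducible in $P_S(X)$ by Lemma \ref{irr-image}, so $\mathcal A\in\ir(P_S(X))$. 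Applying (2) yields $\bigcap\mathcal A=\bigcap_{x\in X}\ua x\neq\emptyset$; any point of this intersection is an upper bound of all of $X$ in the specialization order, hence a greatest element $\top$, and then $\overline{\{\top\}}=\da\top=X$. Consequently every member of $\ir_c(X)$ is a point closure, and $X$ is sober.

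I expect the conceptual crux to be the choice of the test family together with the recognition that $\bigcap S^u(X)$ is precisely the set of upper bounds of $X$, so that its nonemptiness is equivalent to the existence of a top element---which, under the present hypothesis, is exactly what sobriety amounts to. The remaining verifications (that each $\ua x$ is compact saturated, that $\xi_X$ is an embedding, and the $T_0$ uniqueness of generic points) are routine, and I would dispatch them quickly.
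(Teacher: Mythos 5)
Your proof is correct and follows essentially the same route as the paper: the forward direction is exactly the paper's appeal to Theorem \ref{Smyth-sober} (with the instantiation $U=\emptyset$ and the nonemptiness of members of $\mathord{\mathsf{K}}(X)$ made explicit), and the converse uses the same test family $\xi_X(X)=\{\mathord{\uparrow}x : x\in X\}$, irreducible by Lemmas \ref{irr-image} and \ref{xi-continuous} together with $X\in\ir_c(X)$, whose nonempty intersection yields a greatest element $\top$ with $X=\overline{\{\top\}}$. No gaps.
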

\begin{proof}  (1) $\Rightarrow$ (2): By Theorem \ref{Smyth-sober}.

(2) $\Rightarrow$ (1): By Lemma \ref{irr-image}, Lemma \ref{xi-continuous} and $X\in \ir_c(X)$, $\xi_X(X)=\{\uparrow x : x\in X\}\in \ir (P_S(X))$, and hence by Condition (2), $\bigcap_{x\in X}\uparrow x\neq\emptyset$ or, equivalently, $X$ has a largest element, say $T$. Then $X=\overline{\{T\}}$. Thus $X$ is sober.

\end{proof}

 A topological space $X$ is called \emph{well-filtered} if it is $T_0$, and for any open set $U$ and filtered family $\mathcal{K}\subseteq \mathord{\mathsf{K}}(X)$, $\bigcap\mathcal{K}{\subseteq} U$ implies $K{\subseteq} U$ for some $K{\in}\mathcal{K}$.

It is well-known that every sober space is well-filtered and every well-filtered space is a $d$-space. Kou \cite{Kou-2001} gave the
first example of a dcpo whose Scott space is well-filtered but
non-sober. Another simpler dcpo whose Scott topology is well-filtered but not sober was presented in \cite{Zhao-Xi-Chen-2019}. In \cite{Jia-2018}, Jia constructed a countable infinite dcpo whose Scott topology is well-filtered but non-sober. It is worth noting that Johnstone \cite{Johnstone-1981} constructed the first example of a dcpo whose Scott
space is non-sober (indeed, it is not well-filtered) and Isbell \cite{Isbell-1982} constructed a complete
lattice whose Scott space is non-sober.

 \begin{proposition}\label{WF-filtered-compact-sets-cap-compact} For a well-filtered space $X$ and a filtered family $\mathcal K\subseteq \mk (X)$, $\bigcap \mathcal K\in \mk (X)$.
\end{proposition}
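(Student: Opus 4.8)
The plan is to show that $\bigcap\mathcal K$ is a nonempty compact saturated set by combining the well-filteredness hypothesis with the Smyth-order structure recorded in Lemma \ref{KX-sup}. Since each $K\in\mathcal K$ is saturated (an upper set in the specialization order) and the intersection of upper sets is an upper set, $\bigcap\mathcal K$ is automatically saturated, so the only two things requiring work are \emph{nonemptiness} and \emph{compactness}. Both of these I expect to extract from a single open cover argument.

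First I would establish compactness. Let $\{U_i : i\in I\}\subseteq\mathcal O(X)$ be an open cover of $\bigcap\mathcal K$, and set $U=\bigcup_{i\in I}U_i$, so that $\bigcap\mathcal K\subseteq U$ with $U$ open. By the definition of well-filteredness applied to the filtered family $\mathcal K\subseteq\mk(X)$ and the open set $U$, there is some $K_0\in\mathcal K$ with $K_0\subseteq U$. Now $K_0$ is itself compact, so finitely many of the $U_i$ already cover $K_0$, and since $\bigcap\mathcal K\subseteq K_0$, those same finitely many $U_i$ cover $\bigcap\mathcal K$. This yields a finite subcover, proving $\bigcap\mathcal K$ is compact.

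Next I would handle nonemptiness, which is really the crux. Suppose toward a contradiction that $\bigcap\mathcal K=\emptyset$. Then trivially $\bigcap\mathcal K\subseteq\emptyset=U$ with $U=\emptyset\in\mathcal O(X)$ the empty open set; applying well-filteredness once more gives some $K\in\mathcal K$ with $K\subseteq\emptyset$, i.e.\ $K=\emptyset$, contradicting $K\in\mk(X)$ (whose members are nonempty by definition). Hence $\bigcap\mathcal K\neq\emptyset$, and together with compactness and saturatedness this shows $\bigcap\mathcal K\in\mk(X)$. Finally, by Lemma \ref{KX-sup} this also identifies $\bigcap\mathcal K$ as $\bigvee_{K\in\mathcal K}K$ in the poset $\mk(X)$, though the statement only asks for membership in $\mk(X)$.

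The only subtle point—and the step I would flag as the potential obstacle—is the nonemptiness argument: one must notice that the empty set counts as an open set so that the well-filtered condition applies directly, rather than attempting a separate finite-intersection-property compactness argument (which would require $X$ to be, say, sober or at least to have additional completeness). The compactness step is entirely routine once the correct $K_0\in\mathcal K$ is produced by well-filteredness. I would present the two applications of the well-filtered hypothesis (to $U=\bigcup U_i$ and to $U=\emptyset$) as the two pillars of the proof.
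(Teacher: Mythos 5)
Your proof is correct and follows essentially the same route as the paper's: saturatedness is immediate, nonemptiness comes from applying well-filteredness to the empty open set (the paper's parenthetical remark is exactly this observation), and compactness follows the identical cover argument—well-filteredness produces a $K_0\in\mathcal K$ inside the union, and compactness of $K_0$ together with $\bigcap\mathcal K\subseteq K_0$ yields the finite subcover. No gaps; the closing remark invoking Lemma \ref{KX-sup} is accurate but, as you note, not needed for the statement.
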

\begin{proof} Clearly, $\bigcap \mathcal K$ is saturated and $\bigcap \mathcal K\neq\emptyset$ (otherwise, $\bigcap \mathcal K=\emptyset$ implies $K=\emptyset$ for some $K\in \mathcal K$, a contradiction). Now we verify that $\bigcap \mathcal K$ is compact. Let $\{U_i : i\in I\}$ be an open cover of $\bigcap \mathcal K$. As $X$ is well-filtered, there is a $K\in\mathcal K$ such that $K\subseteq \bigcup_{i\in I}U_i$. By the compactness of $K$, there is $J\in I^{(<\omega)}$ such that $K\subseteq \bigcup_{i\in J}U_i$, and hence $\bigcap \mathcal K\subseteq K\subseteq \bigcup_{i\in J}U_i$. Thus $\bigcap \mathcal K\in \mk (X)$.
\end{proof}

\begin{corollary}\label{X-irreducible-wf-space-charac}  Let $X$ be a $T_0$ space for which $\ir_c(X)=\{\overline{\{x\}} : x\in X\}\cup\{X\}$. Then the following two conditions are equivalent:
\begin{enumerate}[\rm (1)]
\item $X$ is well-filtered.
\item For any filtered family $\mathcal{K}\subseteq \mathord{\mathsf{K}}(X)$, $\bigcap\mathcal{K}\neq \emptyset$.
\end{enumerate}
\end{corollary}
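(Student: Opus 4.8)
The direction (1) $\Rightarrow$ (2) requires almost no work: if $X$ is well-filtered and $\mathcal K\subseteq\mk(X)$ is a filtered family, then Proposition \ref{WF-filtered-compact-sets-cap-compact} already gives $\bigcap\mathcal K\in\mk(X)$, and since every element of $\mk(X)$ is by definition a nonempty set, we conclude $\bigcap\mathcal K\neq\emptyset$.

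For (2) $\Rightarrow$ (1) I would argue by contraposition, along the same lines as Proposition \ref{X-irreducible-sober-charac}, except that the Topological Rudin Lemma must now do the work of producing a suitable point, since hypothesis (2) only constrains filtered families rather than arbitrary irreducible subsets of $P_S(X)$. Fix a filtered family $\mathcal K\subseteq\mk(X)$ and an open set $U$ with $\bigcap\mathcal K\subseteq U$, and suppose for contradiction that $K\not\subseteq U$ for every $K\in\mathcal K$. Setting $C=X\setminus U$, this is exactly the statement that the closed set $C$ meets every member of $\mathcal K$. The first step is to observe that $\mathcal K$, being filtered for inclusion, is directed for the Smyth order, and that the Smyth order is the specialization order of $P_S(X)$; as every directed set is irreducible, $\mathcal K\in\ir(P_S(X))$, so the Topological Rudin Lemma (Lemma \ref{topological-Rudin-lemma}) is applicable.

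The lemma then yields a minimal irreducible closed subset $A\subseteq C$ that still meets every $K\in\mathcal K$. Here the structural hypothesis $\ir_c(X)=\{\overline{\{x\}}:x\in X\}\cup\{X\}$ enters: either $A=X$, or $A=\overline{\{x\}}=\da x$ for some $x\in X$. In the first case $X=A\subseteq C$ forces $U=\emptyset$ and hence $\bigcap\mathcal K=\emptyset$, which contradicts (2) --- this is the one place where hypothesis (2) is used. In the second case, for each $K\in\mathcal K$ a point of $A\cap K$ lies below $x$, and since $K$ is saturated (an upper set) this forces $x\in K$; thus $x\in\bigcap\mathcal K\subseteq U$, while simultaneously $x\in A\subseteq C=X\setminus U$, again a contradiction. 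Hence some $K\subseteq U$, and $X$ is well-filtered.

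The crux of the argument, and the step I expect to require the most care, is this reduction: recognising that a family which is filtered in $\mk(X)$ is an irreducible subset of $P_S(X)$ so that Rudin's Lemma applies, and then using the hypothesis on $\ir_c(X)$ to pin the resulting minimal irreducible closed set down to a single point closure $\overline{\{x\}}$, with the degenerate case $A=X$ ruled out exactly by (2). The final passage from $\overline{\{x\}}\cap K\neq\emptyset$ to $x\in K$ via saturation is routine.
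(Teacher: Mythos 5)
Your proof is correct and takes essentially the same approach as the paper's: Proposition \ref{WF-filtered-compact-sets-cap-compact} for (1) $\Rightarrow$ (2), and for (2) $\Rightarrow$ (1) the Topological Rudin Lemma applied to the closed set $X\setminus U$, with the hypothesis on $\ir_c(X)$ pinning the minimal irreducible closed set down to a point closure $\overline{\{x\}}$. The only differences are presentational: the paper rules out $A=X$ upfront (condition (2) gives $\bigcap\mathcal{K}\neq\emptyset$, hence $U\neq\emptyset$), while you handle it inside the case analysis, and you spell out the paper's implicit observation that a filtered family in $\mk(X)$ is an irreducible subset of $P_S(X)$.
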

\begin{proof}  (1) $\Rightarrow$ (2): By Proposition \ref{WF-filtered-compact-sets-cap-compact}.

(2) $\Rightarrow$ (1): Let $\mathcal{K}$ be a filtered family of compact saturated sets and $U$ an open set with $\bigcap\mathcal{K}{\subseteq} U$. As $\bigcap\mathcal{K}\neq \emptyset$, we have $U\neq\emptyset$. If $K\nsubseteq U$ for each $K\in \mathcal K$, then by Lemma \ref{topological-Rudin-lemma}, $X\setminus U$ contains a minimal irreducible closed subset $A$ that still meets all members of $\mathcal K$. Since $U\neq\emptyset$, we have $A\neq X$ and hence by $\ir_c(X)=\{\overline{\{x\}} : x\in X\}\cup \{X\}$ there is $x\in X$ with $A=\overline{\{x\}}$. It follows that $x\in \bigcap \mathcal K\subseteq U$, which contradicts $x\in A\subseteq X\setminus U$. Thus $K\subseteq U$ for some $K\in \mathcal K$, proving that $X$ is well-filtered.

\end{proof}

In \cite{Xi-Lawson-2017}, Xi and Lawson gave a sufficient condition for a dcpo $P$ with Scott topology to be well-filtered.

\begin{proposition}\label{Lawson-compact-Scott-wf-2} \emph{(\cite{Xi-Lawson-2017})} For a dcpo $P$, if $(P, \lambda (P))$ is upper semicompact (in particular, if $(P, \lambda (P))$ is compact or $P$ is a complete lattice), then $(P, \sigma (P))$ is well-filtered.
\end{proposition}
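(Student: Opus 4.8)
The plan is to argue by contradiction using the Topological Rudin Lemma (Lemma \ref{topological-Rudin-lemma}). Let $\mathcal K\subseteq\mk(\Sigma P)$ be filtered and let $U\in\sigma(P)$ satisfy $\bigcap\mathcal K\subseteq U$; suppose, toward a contradiction, that $K\nsubseteq U$ for every $K\in\mathcal K$, and put $C=P\setminus U$, a Scott-closed (hence lower) set. Since $\mathcal K$ is filtered it is directed in the Smyth order, hence irreducible in $P_S(\Sigma P)$, and $K\cap C\neq\emptyset$ for all $K$, i.e. $\mathcal K\subseteq\Diamond C$. Applying Lemma \ref{topological-Rudin-lemma} to $C$, I obtain a minimal irreducible Scott-closed set $A\subseteq C$ that still meets every member of $\mathcal K$.

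Two reductions make the goal concrete. First, a simple order observation: for an upper set $K$ and any point $m$ one has $\da m\cap K\neq\emptyset$ iff $m\in K$ (if $y\leq m$ and $y\in K$, then $m\in K$ since $K=\ua K$). Consequently, if $A$ has a greatest element $m$, then $A=\da m$ meets every $K$, so $m\in K$ for all $K$, i.e. $m\in\bigcap\mathcal K\subseteq U$, contradicting $m\in A\subseteq C=P\setminus U$. Thus it suffices to prove that $A$ has a greatest element. Second, minimality forces density: for each $K\in\mathcal K$ the Scott-closed set $\overline{A\cap K}$ again meets every $K'\in\mathcal K$ (choose $K''\subseteq K\cap K'$ in $\mathcal K$; then $\emptyset\neq A\cap K''\subseteq\overline{A\cap K}\cap K'$), so applying Lemma \ref{topological-Rudin-lemma} inside $\overline{A\cap K}$ and invoking the minimality of $A$ yields $\overline{A\cap K}=A$. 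Note also that each $A\cap K$ is Scott-compact, being a Scott-closed subset of the Scott-compact set $K$.

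Now the hypothesis enters. Since every $\ua x$ is $\omega(P)$-closed, the order on $(P,\lambda(P))$ is upper semiclosed, so Lemma \ref{upper-semiclosed-closed} is available in $(P,\lambda(P))$, where ``compact'' means $\lambda(P)$-compact and ``Scott closed'' means $\sigma(P)$-closed. For $x\in A$ the set $\ua x\cap A$ is a $\lambda(P)$-closed subset of the $\lambda(P)$-compact set $\ua x$, hence is itself $\lambda(P)$-compact; in particular $A$ is a sub-dcpo in which, by Zorn's Lemma, every element lies below a maximal one. The plan is to upgrade this to a single greatest element: if each $A\cap K$ is $\lambda(P)$-compact, then by Lemma \ref{upper-semiclosed-closed} the set $\da(A\cap K)$ is Scott-closed, whence $\da(A\cap K)=\overline{A\cap K}=A$; thus every $x\in A$ lies below some point of $A\cap K$, so the $\lambda(P)$-compact set $\ua x\cap A$ meets every $K\in\mathcal K$, and a filtered finite-intersection argument inside the $\lambda(P)$-compact sets $\ua x$ then produces a common point, from which one extracts the greatest element of $A$ as the supremum of a suitable maximal chain.

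I expect the technical heart, and the main obstacle, to be exactly the step of controlling the compact pieces in the Lawson topology: a Scott-compact saturated set need not be $\lambda(P)$-closed, so one cannot simply intersect the $K$ with $\lambda(P)$-closed sets and retain compactness. The argument must therefore route everything through the genuinely $\lambda(P)$-compact and $\lambda(P)$-closed sets $\ua x\cap A$, together with the density $\overline{A\cap K}=A$, rather than through the $K$ directly. Once a greatest element $m$ of $A$ is secured, the first reduction closes the proof: $m\in\bigcap\mathcal K\subseteq U$ while $m\in A\subseteq P\setminus U$, a contradiction, so some $K\subseteq U$ and $\Sigma P$ is well-filtered. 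Finally, upper semicompactness (rather than full compactness of $(P,\lambda(P))$) is all that is used, since only the sets $\ua x$ and their $\lambda(P)$-closed subsets need be $\lambda(P)$-compact; the cases where $(P,\lambda(P))$ is compact or $P$ is a complete lattice are immediate specializations.
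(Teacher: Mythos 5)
The paper itself states this proposition without proof, citing \cite{Xi-Lawson-2017}, so there is no in-paper argument to measure you against; judged on its own terms, your proposal has a genuine gap, located exactly where you yourself flag it. Everything you actually carry out is correct but constitutes the routine outer shell: the reduction via Lemma \ref{topological-Rudin-lemma} to a minimal irreducible Scott-closed set $A\subseteq P\setminus U$ meeting every $K\in\mathcal K$; the density $\overline{A\cap K}=A$ forced by minimality; the $\lambda(P)$-compactness of $\ua x\cap A$; and the closing contradiction once $A$ is known to have a greatest element $m$ (then $A=\da m$ and $m\in\bigcap\mathcal K\subseteq U$ while $m\in A\subseteq P\setminus U$). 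The pivotal step --- that $\da(A\cap K)=A$, equivalently $\ua x\cap A\cap K\neq\emptyset$ for all $x\in A$ and $K\in\mathcal K$ --- is obtained only under the unverified hypothesis ``if each $A\cap K$ is $\lambda(P)$-compact.'' Nothing in the data supports this: $A\cap K$ is merely Scott-compact, a Scott-compact saturated set $K$ need be neither $\lambda(P)$-closed nor $\lambda(P)$-compact, and so Lemma \ref{upper-semiclosed-closed} cannot be applied to $A\cap K$ in $(P,\lambda(P))$. If Scott-compact saturated sets were automatically Lawson-compact under the standing assumption, the theorem would indeed be nearly immediate; establishing something of that strength (or circumventing it) is essentially the entire content of Xi and Lawson's result, not a detail.

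The fallback you sketch does not repair this. The ``filtered finite-intersection argument inside the $\lambda(P)$-compact sets $\ua x$'' needs the sets $\ua x\cap A\cap K$ ($K\in\mathcal K$) to be $\lambda(P)$-closed, which fails for the same reason; replacing them by the closures $\cl_{\lambda(P)}(\ua x\cap A\cap K)$ does let compactness produce a common point $z$, but then $z$ lies only in $\cl_{\lambda(P)}(A\cap K)$, and there is no way to certify $z\in K$ --- which is precisely the information the argument must preserve. The same defect blocks the natural attempt to show $\da(A\cap K)$ is Scott-closed: for directed $D\subseteq\da(A\cap K)$ with $s=\vee D$, intersecting the $\lambda(P)$-compact sets $\ua d\cap A$ with Lawson closures of $A\cap K$ yields a point $z\geq s$ with $z\in A\cap\cl_{\lambda(P)}(A\cap K)$, not $z\in A\cap K$, and the argument does not close. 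There is also an internal incoherence worth noting: had your common-point step succeeded, it would produce $z\in\ua x\cap A\cap\bigcap\mathcal K$, and since $\bigcap\mathcal K\subseteq U$ while $z\in A\subseteq P\setminus U$ this is already the desired contradiction --- the trailing extraction of ``the greatest element of $A$ as the supremum of a suitable maximal chain'' would be superfluous, which signals that no concrete argument was in hand at that point. In short, the proposal assembles the standard reductions correctly but leaves the theorem's technical heart --- transferring Lawson-topology compactness to sets of the form $A\cap K$ with $K$ only Scott-compact --- as an acknowledged placeholder, so it cannot be accepted as a proof; for how this step is genuinely handled, one must consult \cite{Xi-Lawson-2017}.
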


The following result is well-known.

 \begin{theorem}\label{LC-sober=LC-wf=CC-sober} \emph{(\cite{GHKLMS-2003, Kou-2001})}  For a $T_0$ space $X$, the following conditions are equivalent:
\begin{enumerate}[\rm (1)]
	\item $X$ is locally compact and sober.
	\item $X$ is locally compact and well-filtered.
	\item $X$ is core compact and sober.
\end{enumerate}
\end{theorem}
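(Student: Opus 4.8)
The plan is to establish the cycle of implications $(1)\Rightarrow(2)\Rightarrow(3)\Rightarrow(1)$. The implication $(1)\Rightarrow(2)$ is immediate, since every sober space is well-filtered (as recalled in Section \ref{sober-space-and-well-filtered-spaces}). For $(2)\Rightarrow(3)$, one half is routine: by Remark \ref{core-compact-not-LC} local compactness implies core compactness, so it remains only to show that a locally compact well-filtered space is sober. For $(3)\Rightarrow(1)$ I must show that a core compact sober space is locally compact; this is the Hofmann--Lawson direction and will be the main obstacle.

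For the sobriety half of $(2)\Rightarrow(3)$, let $A\in\ir_c(X)$ and set $\mathcal K=\{K\in\mk(X): A\cap\ii\,K\neq\emptyset\}$. First I would check that $\mathcal K$ is a nonempty filtered family: nonemptiness follows from local compactness applied to any point of $A$, and filteredness follows because, given $K_1,K_2\in\mathcal K$, irreducibility of $A$ forces $A\cap\ii\,K_1\cap\ii\,K_2\neq\emptyset$, and local compactness then produces $K_3\in\mk(X)$ with $K_3\subseteq K_1\cap K_2$ and $A\cap\ii\,K_3\neq\emptyset$. By Proposition \ref{WF-filtered-compact-sets-cap-compact}, $M=\bigcap\mathcal K\in\mk(X)$. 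I would then verify $M\cap A\neq\emptyset$: otherwise $M\subseteq X\setminus A$, and well-filteredness yields some $K\in\mathcal K$ with $K\subseteq X\setminus A$, contradicting $A\cap\ii\,K\neq\emptyset$. Finally, any $x\in M\cap A$ is a generic point of $A$: clearly $\da x\subseteq A$ since $A$ is a lower set, while for $a\in A$ and any open $U\ni a$, local compactness gives $K\in\mk(X)$ with $a\in\ii\,K\subseteq K\subseteq U$, so $K\in\mathcal K$ and hence $x\in M\subseteq K\subseteq U$; thus $a\leq_X x$, giving $A\subseteq\da x$ and $A=\overline{\{x\}}$. Uniqueness is forced by $T_0$, so $X$ is sober.

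For $(3)\Rightarrow(1)$, I would exploit that core compactness means $\mathcal O(X)$ is a continuous lattice, so the way-below relation $\ll$ on $\mathcal O(X)$ interpolates and each open $W$ satisfies $W=\bigcup\{V:V\ll W\}$. The crux is the claim: for $U\ll W$ in $\mathcal O(X)$, there is a compact saturated $K$ with $U\subseteq K\subseteq W$. To produce $K$, I would consider the Scott-open filter $\mathcal G=\{V\in\mathcal O(X): U\ll V\}$ of $\mathcal O(X)$ and set $K=\bigcap\mathcal G$; sobriety, via the correspondence between Scott-open filters of $\mathcal O(X)$ and compact saturated subsets of $X$, guarantees that $K$ is compact saturated and that $\mathcal G$ is exactly its open-neighbourhood filter. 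Then $W\in\mathcal G$ gives $K\subseteq W$, while $U\subseteq V$ for every $V\in\mathcal G$ gives $U\subseteq K$. Granting this, local compactness follows: for $x\in W$ pick $V\ll W$ with $x\in V$, interpolate $V\ll V'\ll W$, obtain compact saturated $K$ with $V'\subseteq K\subseteq W$, and note $x\in V\subseteq V'\subseteq\ii\,K\subseteq K\subseteq W$.

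The main obstacle is this last step: extracting a genuinely compact saturated set from a way-below pair. That $\mathcal G$ is a Scott-open filter is easy from interpolation and the lattice structure of $\mathcal O(X)$, but the passage $\mathcal G\mapsto\bigcap\mathcal G$ to a compact saturated set is where sobriety is indispensable and is precisely the content of the Hofmann--Lawson theorem. Here I would either invoke the Hofmann--Mislove theorem directly or reprove the needed correspondence using the Smyth power space machinery underlying Theorem \ref{Smyth-sober}.
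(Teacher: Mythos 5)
The paper itself gives no proof of this theorem --- it is quoted as a known result from \cite{GHKLMS-2003, Kou-2001} --- so your attempt must be measured against the classical arguments. Your implications (1) $\Rightarrow$ (2) and (2) $\Rightarrow$ (3) are correct. In particular your proof that a locally compact well-filtered space is sober --- forming the family $\mathcal K=\{K\in\mk(X): A\cap \ii\,K\neq\emptyset\}$, checking it is filtered via irreducibility of $A$ (Remark \ref{irr-open-charac}) and local compactness, applying Proposition \ref{WF-filtered-compact-sets-cap-compact} to get $M=\bigcap\mathcal K\in\mk(X)$, and extracting a generic point from $M\cap A$ --- is exactly the standard (Kou-style) argument, and every step checks out.

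The gap is in (3) $\Rightarrow$ (1), at precisely the step you flag as "easy": the family $\mathcal G=\{V\in\mathcal O(X): U\ll V\}$ is \emph{not} in general a filter. Interpolation does give Scott-openness, but closure under binary intersection amounts to multiplicativity of $\ll$ on $\mathcal O(X)$, which is a coherence/stability property and does not follow from core compactness plus sobriety. A counterexample inside your hypothesis class: let $P=\{a,b\}\cup\{c_n : n\in\mathbb{N}\}$ with $a<c_n$ and $b<c_n$ for all $n$, the $c_n$ pairwise incomparable, and give $X=P$ the Alexandroff topology $\alpha(P)$. Every ideal of $P$ is principal, so every irreducible closed set is a point closure and $X$ is sober; each $\ua x$ is a compact open neighborhood, so $X$ is locally compact, hence core compact. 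Put $C=\{c_n : n\in\mathbb{N}\}$. Then $C\ll \ua a$ and $C\ll \ua b$ (any open containing $a$ contains $\ua a\supseteq C$, and likewise for $b$), but $\ua a\cap \ua b=C$ and $C\not\ll C$: the finite subsets of $C$ are open (the $c_n$ are maximal), form a directed cover of $C$, and no member contains $C$. So $\mathcal G$ is not a filter; worse, $\bigcap\mathcal G=C$ is not compact, so the assignment $\mathcal G\mapsto\bigcap\mathcal G$ does not yield the desired $K$, and the Hofmann--Mislove theorem (which applies only to Scott-open \emph{filters}) cannot be invoked as you propose. Your claim itself --- $U\ll W$ implies $U\subseteq K\subseteq W$ for some compact saturated $K$ (here $K=\ua a$ works) --- is true, but the standard repair is needed: interpolate a countable chain $U\ll\cdots\ll U_2\ll U_1\ll W$ and set $\mathcal F=\{V\in\mathcal O(X) : U_n\subseteq V \mbox{ for some } n\}$. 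This $\mathcal F$ is genuinely a Scott-open filter (the decreasing chain gives closure under intersections, and $U_{n+1}\ll U_n$ gives Scott-openness), so by Hofmann--Mislove $K=\bigcap\mathcal F$ is compact saturated with $U\subseteq K\subseteq W$. With that substitution your cycle (1) $\Rightarrow$ (2) $\Rightarrow$ (3) $\Rightarrow$ (1) goes through.
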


The above result was improved in \cite{Lawson-Wu-Xi-2020, Xu-Shen-Xi-Zhao-2020-2} by two different methods.

\begin{proposition}\label{CC-wf-sober} \emph{(\cite{Lawson-Wu-Xi-2020, Xu-Shen-Xi-Zhao-2020-2})} Every core compact well-filtered space is sober.
\end{proposition}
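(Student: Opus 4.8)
The plan is to prove sobriety by exhibiting, for every $A\in\ir_c(X)$, a generic point; uniqueness is then automatic since $X$ is $T_0$, so the whole difficulty is existence. Fix such an $A$. Because $A$ is irreducible, Remark \ref{irr-open-charac} shows that $\Phi_A=\{U\in\mathcal O(X):U\cap A\neq\emptyset\}$ is a filter of $\mathcal O(X)$, and it is in fact a \emph{completely prime} Scott-open filter: $\bigcup_{i}U_i\in\Phi_A$ forces $U_i\in\Phi_A$ for some $i$, since a point of $A$ lying in the union lies in some $U_i$. Existence of a generic point $p$ for $A$ is equivalent to $\Phi_A$ being the neighbourhood filter $\{U:p\in U\}$ of $p$, so the task is to realise the abstract filter $\Phi_A$ by a point. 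Here core compactness enters through Definition \ref{def-continuous-domain}: $\mathcal O(X)$ is a continuous lattice, so each $U\in\Phi_A$ is the directed join of the opens way below it, and Scott-openness of $\Phi_A$ permits interpolation within $\Phi_A$, i.e.\ for each $U\in\Phi_A$ there is $V\in\Phi_A$ with $V\ll U$ in $\mathcal O(X)$.

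Next I would analyse the saturated set $Q_A=\bigcap\Phi_A$, whose points are exactly the $x$ with $A\subseteq\overline{\{x\}}$. I claim it suffices to prove that $Q_A$ is a nonempty compact saturated set with $\Phi_A=\{U\in\mathcal O(X):Q_A\subseteq U\}$. Indeed, granting this, $Q_A$ is supercompact in the sense of Definition \ref{def-C-LHC-CC-spaces}: if $Q_A\subseteq\bigcup_i U_i$, then $\bigcup_i U_i\in\Phi_A$, and complete primeness yields $U_i\in\Phi_A$, hence $Q_A\subseteq U_i$, for a single $i$. By Lemma \ref{supercompact-set-charac}, $Q_A=\ua p$ for its least element $p$, and then $\Phi_A=\{U:p\in U\}$ forces $A=\overline{\{p\}}$. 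Thus everything reduces to the compactness of $Q_A$ and the identity $\Phi_A=\{U:Q_A\subseteq U\}$, and for this well-filteredness is decisive: the aim is to produce a \emph{filtered} family $\mathcal K\subseteq\mk(X)$ with $\bigcap\mathcal K=Q_A$ that generates $\Phi_A$ (that is, $U\in\Phi_A$ iff $K\subseteq U$ for some $K\in\mathcal K$). Granting such a family, Proposition \ref{WF-filtered-compact-sets-cap-compact} makes $Q_A$ compact, while the defining property of well-filtered spaces gives $\Phi_A=\{U:Q_A\subseteq U\}$.

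The construction of that filtered family is the step I expect to be the main obstacle. If $X$ were locally compact, every way-below pair $V\ll U$ produced in the first paragraph would be witnessed by a compact saturated $K$ with $V\subseteq\ii K\subseteq K\subseteq U$, and feeding the interpolation these witnesses would at once give a filtered family of compact saturated sets with intersection $Q_A$. Under mere core compactness such compact witnesses of $\ll$ need not exist — this is precisely the Hofmann--Lawson phenomenon recorded in Remark \ref{core-compact-not-LC} — so the compactness must be wrung out of well-filteredness directly. My approach would be a contradiction argument: if some $U\supseteq Q_A$ failed to lie in $\Phi_A$, then $C=X\setminus U$ would be a closed set meeting every member of the irreducible family $\xi_X(A)=\{\ua a:a\in A\}$ of $P_S(X)$ (irreducible by Lemmas \ref{irr-image} and \ref{xi-continuous}), and the Topological Rudin Lemma \ref{topological-Rudin-lemma} would furnish a \emph{minimal} irreducible closed $A^*\subseteq C$ still meeting all the $\ua a$. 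The minimality of $A^*$ is what must substitute for local compactness: it rigidly controls how the way-below opens drawn from $\Phi_{A^*}$ are allowed to shrink, and it is this control that I would exploit — in tandem with core compactness and Proposition \ref{WF-filtered-compact-sets-cap-compact} — to assemble a genuinely filtered family of compact saturated sets pinning down a point of $A^*\cap Q_A$, contradicting $A^*\subseteq X\setminus U$. Turning this qualitative description into an actual filtered family, i.e.\ reconciling the lattice-theoretic way-below relation of $\mathcal O(X)$ with the Rudin-minimal set $A^*$, is where the genuine work lies; the remainder of the argument is formal.
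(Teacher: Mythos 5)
Your reduction is correct as far as it goes: the correspondence between the irreducible closed set $A$ and the completely prime Scott-open filter $\Phi_A$, the interpolation property of Scott-open filters in the continuous lattice $\mathcal O(X)$, the identification $Q_A=\{x: A\subseteq\overline{\{x\}}\}$, and the derivation of a generic point from supercompactness via Lemma \ref{supercompact-set-charac} are all standard and correctly executed (this is the Hofmann--Mislove-style frame). But the proof has a genuine gap, and you name it yourself: the construction of a filtered family $\mathcal K\subseteq\mk(X)$ with $\bigcap\mathcal K=Q_A$ generating $\Phi_A$ is never carried out. That step is not a technical remainder --- it is the entire mathematical content of the proposition. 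Under local compactness the statement is classical (Theorem \ref{LC-sober=LC-wf=CC-sober}); the whole point of Proposition \ref{CC-wf-sober} is the passage to mere core compactness, where, as Remark \ref{core-compact-not-LC} records, there are core compact spaces in which every compact set has empty interior, so no compact saturated witnesses for the pairs $V\ll U$ drawn from $\Phi_A$ can be produced by interpolation; they must be wrung out of well-filteredness by some other mechanism, and your sketch supplies none. The appeal to the Rudin-minimal set $A^*$ ``rigidly controlling'' how the way-below opens shrink is a hope, not an argument: no construction is given that yields even one nonempty compact saturated set below a member of $\Phi_A$, let alone a filtered family pinning down a point of $A^*\cap Q_A$.

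For comparison: the paper itself states the proposition by citation and gives no proof, noting only that the two references established it ``by two different methods.'' Those methods are each substantial --- Lawson, Wu and Xi argue through compactness properties of the lower topology, while Xu, Shen, Xi and Zhao work in the Smyth power space with the machinery of well-filtered determined spaces built on the topological Rudin Lemma \ref{topological-Rudin-lemma} --- which confirms that the step you defer is precisely where new ideas were required. As written, your proposal reproduces the formal shell common to all proofs of Hofmann--Mislove type and stops exactly where this theorem begins.
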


\section{Scott topology on product of two posets}\label{Scott-topology-on-product-of-two-dcpos}

\begin{definition}\label{def-cofinal-subset-of-directed-set} Let $P$ be a poset $P$ and $A$ a nonempty set of $P$. A nonempty $B\subseteq A$ is called a \emph{cofinal} subset of $A$, or $B$ is cofinal in $A$, if $A\subseteq \downarrow B$ (equivalently, $\downarrow A=\downarrow B$), that is, for any $a\in A$, there is $b(a)\in B$ such that $a\leq b(a)$.
\end{definition}

For directed subsets of a poset, we have the following folklore result.

\begin{lemma}\label{cofinal-subset-of-directed-set} Let $P$ be a poset and $D$ a directed subset of $P$. If $\{D_1, D_2, ..., D_n\}$ is a finite family of nonempty subsets of $D$ with $D=\bigcup\limits_{i=1}^{n}D_i$, then some $D_m$ ($1\leq m\leq n$) is a cofinal subset of $D$.
\end{lemma}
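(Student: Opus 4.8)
The plan is to argue by contradiction, using directedness to manufacture a single element of $D$ that cannot consistently belong to any of the pieces $D_i$. First I would suppose that none of $D_1, D_2, \dots, D_n$ is cofinal in $D$. By Definition \ref{def-cofinal-subset-of-directed-set}, the failure of cofinality of $D_i$ means $D \not\subseteq \downarrow D_i$, so for each $i \in \{1, 2, \dots, n\}$ I can fix a witness $d_i \in D$ with $d_i \notin \downarrow D_i$; that is, $d_i \not\leq b$ for every $b \in D_i$.

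The next step is to collapse these finitely many witnesses into one. Here I would invoke the basic property of directed sets that every finite subset admits an upper bound lying in the set: applying directedness of $D$ to the finite set $\{d_1, d_2, \dots, d_n\}$ — by a trivial induction on $n$, iterating the two-element upper-bound condition — yields some $d \in D$ with $d_i \leq d$ for all $i$. Finally, since $d \in D = \bigcup_{i=1}^{n} D_i$, there is an index $m$ with $d \in D_m$; but then $d_m \leq d$ together with $d \in D_m$ gives $d_m \in \downarrow D_m$, contradicting the choice of $d_m$. Hence some $D_m$ must be cofinal in $D$.

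The only step that requires any genuine care is the passage from pairwise directedness to the existence of a common upper bound of the finite family $\{d_1, d_2, \dots, d_n\}$ inside $D$; this is exactly where the finiteness hypothesis $n < \omega$ is indispensable, and it is precisely the feature that fails for infinite covers (a directed set need not contain upper bounds of infinite subfamilies). Everything else is a routine unwinding of the definition of cofinality, so I do not anticipate any real obstacle beyond setting up the contradiction correctly.
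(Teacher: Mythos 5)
Your proof is correct and follows essentially the same argument as the paper: assume no $D_i$ is cofinal, pick witnesses $d_i \in D \setminus \mathord{\downarrow} D_i$, use directedness of $D$ to find a common upper bound $d \in D$ of $\{d_1,\dots,d_n\}$, and derive a contradiction from $d \in D_m$ for some $m$. No gaps; the contradiction is set up exactly as in the paper's proof.
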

\begin{proof} Assume, on the contrary, that $D\neq \downarrow D_i$ for all $1\leq i\leq n$. Then for each $i\in \{1, 2, ..., n\}$, there is $d_i\in D$ such that $d_i\not\in \downarrow D_i$. As $D$ is directed, there exists a $d_0\in D$ with $d_0\in \bigcap\limits_{i=1}^{n}\uparrow d_i$. By $D=\bigcup\limits_{i=1}^{n}D_i$, we have $d_0\in D_j$ for some $1\leq j\leq n$ and hence $d_j\in \downarrow d_0\subseteq \downarrow D_j$, a contradiction. Thus there is some $m\in \{1, 2, ..., n\}$ such that $D_m$ is a cofinal subset of $D$.
\end{proof}

\begin{lemma}\label{count-directed-set-sup-count-chain-sup} (\cite{Xu-Shen-Xi-Zhao-2020-1})
Let $P$ be a poset and $D$ a countable directed subset of $P$. Then there exists a countable chain $C\subseteq D$ such that $\da D=\da C$. Hence, $\vee C$ exists and $\vee C=\vee D$ whenever $\vee D$ exists. If $D$ has no largest element, then $C$ can be chosen to be a strictly ascending chain.
\end{lemma}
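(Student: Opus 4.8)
The plan is to build the chain $C$ by a straightforward recursive construction that interleaves an enumeration of $D$ with the directedness of $D$. First I would dispose of the trivial case: if $D$ is finite, then being directed it has a largest element $m$, and $C=\{m\}$ does the job; so I may assume $D$ is countably infinite and fix an enumeration $D=\{d_n : n\in\mathbb N\}$. I then define an ascending chain $c_0\le c_1\le c_2\le\cdots$ in $D$ recursively: set $c_0=d_0$, and given $c_n\in D$, use directedness to choose $c_{n+1}\in D$ with $c_{n+1}\geq c_n$ and $c_{n+1}\geq d_{n+1}$. Putting $C=\{c_n : n\in\mathbb N\}$ yields a countable chain contained in $D$. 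Since $c_0=d_0$ and $c_{n+1}\geq d_{n+1}$ give $d_n\leq c_n$ for every $n$, we have $D\subseteq\da C$, and as $C\subseteq D$ we also have $\da C\subseteq\da D$; hence $\da D=\da C$.

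For the clause about suprema, I would observe that $\da D=\da C$ forces $D$ and $C$ to possess exactly the same set of upper bounds (the upper bounds of a set coincide with those of its down-closure). Consequently $\vee C$ exists precisely when $\vee D$ does, and in that case the two least upper bounds agree, which is the asserted ``hence'' statement.

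Finally, to treat the case where $D$ has no largest element, I would strengthen the recursive step so as to force strict increase while retaining cofinality. Given $c_n$, since $c_n$ is not the largest element of $D$ there exists $e\in D$ with $e\not\le c_n$; applying directedness to the three elements $c_n, e, d_{n+1}$ produces $c_{n+1}\in D$ lying above all of them. Then $c_{n+1}\geq e\not\le c_n$ forces $c_{n+1}\neq c_n$, so $c_{n+1}>c_n$, while $c_{n+1}\geq d_{n+1}$ preserves the cofinality argument above verbatim. The resulting $C$ is thus a strictly ascending countable chain with $\da D=\da C$.

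I expect the only delicate point to be this last refinement: one must \emph{simultaneously} guarantee a genuine strict increase (which the element $e\not\le c_n$ supplies) and keep the chain cofinal in $D$ (which the element $d_{n+1}$ supplies), and it is exactly the three-element instance of directedness that reconciles these two demands in a single step. Everything else—the base case, the cofinality computation, and the passage to suprema—is routine.
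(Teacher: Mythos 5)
Your proposal is correct and follows essentially the same route as the paper's proof: dispose of the finite case via the largest element, build the chain recursively from an enumeration using upper bounds in $D$, and in the no-largest-element case take an upper bound of $\{c_n, e, d_{n+1}\}$ with $e\not\le c_n$ to force strict increase while keeping cofinality. The only cosmetic difference is that the paper bounds $\{d_{n+1},c_1,\ldots,c_n\}$ where you bound just $\{c_n,d_{n+1}\}$, which is equivalent since the $c_i$ form a chain.
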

\begin{proof}
	If $|D|<\omega$, then $D$ contains a largest element $d$, so let $C=\{d\}$, which satisfies the requirement.
	
	Now assume $|D|=\omega$ and let $D=\{d_n:n\in\mathbb{N}\}$. We use induction on $n\in\omega$ to  define $C=\{c_n:n<\omega\}$.
	More precisely, let $c_1=d_1$ and let $c_{n+1}$ ($n\in \mathbb{N}$) be an upper bound of $\{d_{n+1},c_0, c_1,c_2\ldots,c_n\}$ in $D$. It is clear that $C$ is a chain and $\da D=\da C$.

Suppose that $D=\{d_n:n<\omega\}$ is a countable directed and has no largest element. Let $c_1=d_1$. Since $D$ has no largest element, there is $d_{m_1}$ such that $d_{m_1}\nleq c_1$. Let $c_2$ be an upper bound of $\{d_2,c_1, d_{m_1}\}$ in $D$. Then $c_1< c_2$ and $\{d_1, d_2\}\subseteq \downarrow c_2$. We assume generally that for $n\in \mathbb{N}$ we have chosen in $D$ finite elements $c_i$ ($1\leq i\leq n$) such that $ c_1<c_2< ... <c_n$ and $\{d_1, d_2, ..., d_n\}\subseteq \downarrow c_n$. Then as $D$ has no largest element, there is $d_{m_n}$ such that $d_{m_n}\nleq c_n$. Let $c_{n+1}$ be an upper bound of $\{d_{n+1},c_n, d_{m_n}\}$ in $D$. Then $c_n< c_{n+1}$ and $\{d_1, d_2, ..., d_{n+1}\}\subseteq \downarrow c_{n+1}$. So by induction we get a strictly ascending chain $C=\{c_n : n\in\mathbb{N}\}$ satisfying $\da D=\da C$.
\end{proof}

\begin{theorem}\label{Scott-topology-product}  (\cite{GHKLMS-2003})
Let $P$ be a poset. Then the following statements are
equivalent:
\begin{enumerate}[\rm (1)]
\item$\sigma (P)$ is a continuous lattice, that is, $\Sigma~\!\!P$ is core compact.
\item For every poset $S$ one has $\Sigma~\!\!(L\times S)=\Sigma~\!\!L\times \Sigma~\!\!S$, that is, the Scott topology of $L\times S$ is equal to the product of the individual Scott topologies.
\item For every dcpo or complete lattice $S$ one has $\Sigma~\!\!(L\times S)=\Sigma~\!\!L\times \Sigma~\!\!S$.
\item $\Sigma~\!\!(L\times \sigma (L))=\Sigma~\!\!L\times \Sigma~\!\!\sigma (L)$.
\end{enumerate}
\end{theorem}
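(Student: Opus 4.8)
The plan is to prove the equivalences cyclically, $(1)\Rightarrow(2)\Rightarrow(3)\Rightarrow(4)\Rightarrow(1)$, writing $P$ for the poset named $L$ in the statement. Two of the links are essentially formal. For $(2)\Rightarrow(3)$ there is nothing to do, since every dcpo and every complete lattice is a poset. For $(3)\Rightarrow(4)$ I would take the test space to be $S=\sigma(P)$: the collection of Scott-open sets, ordered by inclusion, is a topology and hence a complete lattice, so $(3)$ applies and yields exactly the assertion $\Sigma(P\times\sigma(P))=\Sigma P\times\Sigma\sigma(P)$ of $(4)$. I would also record once and for all the inclusion that holds for \emph{every} poset $S$: since the two projections are Scott continuous, each box $U\times V$ with $U\in\sigma(P)$ and $V\in\sigma(S)$ is Scott open in $P\times S$, so $\sigma(P)\times\sigma(S)\subseteq\sigma(P\times S)$ always. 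Thus in $(1)\Rightarrow(2)$ only the reverse inclusion is at issue.

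For $(1)\Rightarrow(2)$, fix any poset $S$, a set $W\in\sigma(P\times S)$, and a point $(x,y)\in W$; the goal is a Scott-open box inside $W$ through $(x,y)$. For $y'\in S$ put $W_{y'}=\{u\in P:(u,y')\in W\}$. Since $u\mapsto(u,y')$ preserves existing directed sups, Lemma~\ref{Scott-cont-charac} shows each $W_{y'}\in\sigma(P)$, and $x\in W_y$. Here the hypothesis enters: as $\sigma(P)$ is a continuous lattice, there is $A\in\sigma(P)$ with $A\ll W_y$ and $x\in A$. The \emph{decisive step} is to produce a Scott-open $V'\ni y$ in $S$ with $A\times V'\subseteq W$. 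I would verify that $\phi:S\to\sigma(P)$, $y'\mapsto W_{y'}$, is Scott continuous (again via Lemma~\ref{Scott-cont-charac}, using that $W$ is Scott open to get $W_{\vee E}=\bigcup_{y'\in E}W_{y'}$ for directed $E$), and then set $V'=\phi^{-1}(\{B\in\sigma(P):A\ll B\})$. In a continuous lattice the upper set $\{B:A\ll B\}$ is Scott open, so $V'\in\sigma(S)$; it contains $y$ because $A\ll W_y$, and for every $y'\in V'$ we have $A\ll W_{y'}$, hence $A\subseteq W_{y'}$, i.e.\ $A\times V'\subseteq W$. Then $(x,y)\in A\times V'\subseteq W$ with $A\in\sigma(P)$ and $V'\in\sigma(S)$, as required.

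For $(4)\Rightarrow(1)$ I would test condition $(4)$ against the membership set $\mathcal E=\{(x,U)\in P\times\sigma(P):x\in U\}$. First I check $\mathcal E\in\sigma(P\times\sigma(P))$: it is an upper set because each Scott-open $U$ is an upper set, and if a directed family of pairs has sup $(x_0,U_0)\in\mathcal E$ then $x_0\in U_0=\bigcup U$, so $x_0$ lies in some member $U_1$ of the family; since $x_0$ is a directed sup of first coordinates and $U_1$ is Scott open, some pair $(x_2,U_2)$ in the family has $x_2\in U_1$, and an upper bound $(x_3,U_3)$ of $(x_1,U_1)$ and $(x_2,U_2)$ then satisfies $x_3\ge x_2\in U_1\subseteq U_3$, so $(x_3,U_3)\in\mathcal E$. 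By $(4)$, $\mathcal E$ is product-open, so each $(x,U)\in\mathcal E$ has a box $V\times\mathcal W\subseteq\mathcal E$ with $V\in\sigma(P)$, $x\in V$, and $\mathcal W\in\sigma(\sigma(P))$, $U\in\mathcal W$. Unravelling $V\times\mathcal W\subseteq\mathcal E$ gives $V\subseteq U'$ for every $U'\in\mathcal W$. From this I claim $V\ll U$: if a directed family $\{U_i\}$ of Scott-open sets has $U\subseteq\bigcup U_i$, then $\bigcup U_i\in\mathcal W$ (as $\mathcal W$ is an upper set containing $U$), whence $U_i\in\mathcal W$ for some $i$ because $\mathcal W$ is Scott open, and then $V\subseteq U_i$. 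Since $x\in V\subseteq U$ was arbitrary, $U=\bigcup\{V\in\sigma(P):V\ll U\}$. Finally, because $\sigma(P)$ is a complete lattice, $\{V:V\ll U\}$ is closed under finite joins and hence directed, so $U$ is the directed sup of the sets way below it, which is precisely continuity of $\sigma(P)$.

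I expect the substantive content to sit in the two non-formal links. In $(1)\Rightarrow(2)$ the crux is the neighbourhood construction, which rests on two load-bearing inputs: the Scott continuity of $y'\mapsto W_{y'}$ (so that openness of $V'$ comes for free) and the Scott openness of $\{B:A\ll B\}$, which is where the interpolation property of continuous lattices is genuinely used. In $(4)\Rightarrow(1)$ the clever point is choosing the universal test object $\mathcal E$ and reading off $V\ll U$ from a product-open box; after that, the passage from ``$U$ is a sup of way-below elements'' to full continuity is the standard remark that $\ll$ respects binary joins in a complete lattice. The places demanding real care are the verification that $\mathcal E$ is Scott open and the argument that the extracted $V$ is genuinely \emph{way below} $U$ rather than merely below it.
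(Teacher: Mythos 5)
Your proposal is correct and follows essentially the same route as the paper, whose own ``proof'' is merely the citation of \cite[Theorem II-4.13]{GHKLMS-2003} together with the remark that the argument remains valid for posets: the Compendium's proof is precisely your combination of the Scott-continuous section map $y'\mapsto W_{y'}$ with the Scott-open sets $\{B\in\sigma(P):A\ll B\}$ for $(1)\Rightarrow(2)$, and the Scott-openness of the membership relation $\mathcal{E}=\{(x,U):x\in U\}$ for $(4)\Rightarrow(1)$. Your writeup additionally makes explicit what the paper only asserts, namely that directed-completeness of $P$ or $S$ is nowhere used (only existing directed sups, plus the fact that $\sigma(P)$ is always a complete lattice), which substantiates the claim that the dcpo proof carries over to arbitrary posets.
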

\begin{proof} It was proved in \cite{GHKLMS-2003} for dcpos (see the proof of \cite[Theorem II-4.13]{GHKLMS-2003}) and the proof is valid for posets.

\end{proof}

\begin{corollary}\label{Scott-topology-product-cor}
Suppose that $P$ is a poset for which $\Sigma~\!\!P$ is locally compact. Then for every poset $S$, $\Sigma~\!\!(P\times S)=\Sigma~\!\!P\times \Sigma~\!\!S$.
\end{corollary}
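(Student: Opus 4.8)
The plan is to reduce the statement directly to Theorem~\ref{Scott-topology-product}, recognizing that the hypothesis of local compactness of $\Sigma~\!\!P$ is merely a convenient strengthening of the core compactness hypothesis that drives that theorem. The whole argument is a short chain of implications, and no new topological machinery is required.

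First I would invoke Remark~\ref{core-compact-not-LC}, which records the standard fact that every locally compact space is core compact. Applying this to the Scott space $\Sigma~\!\!P$, the assumption that $\Sigma~\!\!P$ is locally compact immediately yields that $\Sigma~\!\!P$ is core compact; equivalently, $\sigma(P)$ is a continuous lattice. This is exactly condition~(1) of Theorem~\ref{Scott-topology-product} (with the poset there taken to be our $P$).

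With condition~(1) established, I would then appeal to the equivalence (1)~$\Leftrightarrow$~(2) of Theorem~\ref{Scott-topology-product} to conclude that for every poset $S$ one has $\Sigma~\!\!(P\times S)=\Sigma~\!\!P\times\Sigma~\!\!S$, which is precisely the assertion of the corollary.

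The only point deserving attention is the \emph{direction} of the relationship between local compactness and core compactness: by Remark~\ref{core-compact-not-LC} the former is (strictly) the stronger property, so the passage from ``locally compact'' to ``core compact'' is legitimate and we never need the false converse. Consequently there is no genuine obstacle; the corollary follows at once by combining Remark~\ref{core-compact-not-LC} with the implication (1)~$\Rightarrow$~(2) of Theorem~\ref{Scott-topology-product}.
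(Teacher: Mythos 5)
Your proposal is correct and is exactly the argument the paper intends: the corollary is stated without proof immediately after Theorem~\ref{Scott-topology-product}, precisely because local compactness of $\Sigma~\!\!P$ gives core compactness (Remark~\ref{core-compact-not-LC}), and then the implication (1)~$\Rightarrow$~(2) of that theorem yields the conclusion. Your added remark about the direction of the implication (and the failure of its converse) is accurate and harmless.
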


From Corollary \ref{Scott-topology-on-product-is-Scott-topology-product-imply-sober} and Theorem \ref{Scott-topology-product} we deduce the following.

\begin{corollary}\label{complete-lattice-Scott-core-compact-Scott-topology-sober} (\cite{GHKLMS-2003}) If $P$ is a dcpo and a sup semilattice (especially, a complete lattice) such that $\Sigma~\!\! P$ is core compact (especially, locally compact), then the sup operation $\vee :\Sigma~\!\!P\times \Sigma~\!\! P \rightarrow \Sigma~\!\!P$ is continuous, and hence $\Sigma~\!\!P$ is sober.
\end{corollary}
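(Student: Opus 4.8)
The plan is to obtain this corollary by chaining the two results named in the sentence that introduces it, Theorem~\ref{Scott-topology-product} and Corollary~\ref{Scott-topology-on-product-is-Scott-topology-product-imply-sober}, after first normalising the hypothesis. The parenthetical ``especially, locally compact'' needs no separate argument: by Remark~\ref{core-compact-not-LC} local compactness of $\Sigma~\!\!P$ already implies core compactness, so it suffices to treat the case where $\Sigma~\!\!P$ is core compact. The first thing I would do is feed this into Theorem~\ref{Scott-topology-product} with the choice $S=P$. Core compactness of $\Sigma~\!\!P$ is exactly condition (1) of that theorem, so condition (2) yields the key product identity $\Sigma~\!\!(P\times P)=\Sigma~\!\!P\times\Sigma~\!\!P$, which is the bridge used throughout the rest of the argument.

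For the first assertion, the continuity of the sup operation, I would invoke the observation recorded immediately after Proposition~\ref{jointly-Scott-continuous-Scott-topology-sober}: for a sup semilattice the map $\vee:\Sigma~\!\!(P\times P)\to\Sigma~\!\!P$, $(x,y)\mapsto x\vee y$, is always Scott continuous, regardless of core compactness. This is verified through Lemma~\ref{Scott-cont-charac}: for a directed $D\subseteq P\times P$ one has $\vee D=(\vee\pi_1(D),\vee\pi_2(D))$ (coordinatewise, since $P$ is a dcpo), so applying $\vee$ gives $(\vee\pi_1(D))\vee(\vee\pi_2(D))$; on the other hand each $x\vee y$ with $(x,y)\in D$ lies below this element and dominates both $\pi_1(D)$ and $\pi_2(D)$ pointwise, so $\bigvee_{(x,y)\in D}(x\vee y)=(\vee\pi_1(D))\vee(\vee\pi_2(D))$ as well. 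Thus $\vee$ preserves the directed sup, hence is Scott continuous on $\Sigma~\!\!(P\times P)$. Combining this with the product identity from the previous step, the very same map is continuous as $\vee:\Sigma~\!\!P\times\Sigma~\!\!P\to\Sigma~\!\!P$, which is the first conclusion.

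Finally, for sobriety I would conclude in either of two equivalent ways: directly from the product identity via Corollary~\ref{Scott-topology-on-product-is-Scott-topology-product-imply-sober}, or from the continuity of $\vee$ on $\Sigma~\!\!P\times\Sigma~\!\!P$ just established via Proposition~\ref{jointly-Scott-continuous-Scott-topology-sober}. I expect no deep obstacle here, since both halves are immediate consequences of already-proved results; the only point that genuinely requires care is to keep the Scott topology $\sigma(P\times P)$ on the product distinct from the product topology $\sigma(P)\times\sigma(P)$ when asserting continuity of $\vee$, and to recognise that it is precisely Theorem~\ref{Scott-topology-product} (via core compactness) that collapses this distinction and lets the two continuity statements coincide.
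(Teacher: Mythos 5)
Your proposal is correct and follows exactly the paper's route: the paper derives this corollary in one line from Theorem~\ref{Scott-topology-product} (core compactness collapses $\sigma(P\times P)$ to $\sigma(P)\times\sigma(P)$) together with Corollary~\ref{Scott-topology-on-product-is-Scott-topology-product-imply-sober}, and your verification via Lemma~\ref{Scott-cont-charac} that $\vee:\Sigma~\!\!(P\times P)\to\Sigma~\!\!P$ is always continuous is precisely the remark the paper records just before Corollary~\ref{Scott-topology-on-product-is-Scott-topology-product-imply-sober}. The only cosmetic slip is the phrase saying each individual $x\vee y$ ``dominates $\pi_1(D)$ and $\pi_2(D)$ pointwise''; it is the family $\{x\vee y:(x,y)\in D\}$ that cofinally dominates each projection, as your surrounding computation in fact uses.
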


Matthew de Brecht made the following observation, explained in detail in the blog of Jean
Goubault-Larrecq\footnote{M. de Brecht, On countability: the compact completed sequence,
https://projects.lsv.ens-cachan.fr/topology/?page-id=1852 (2019).}.

\begin{proposition}\label{de-Brecht} Let $P_1$ and $P_2$ be two posets such that the
Scott spaces $\Sigma~\!\!P_1$ and  $\Sigma~\!\!P_2$ are first-countable. Then $\Sigma~\!\! (P_1\times P_2)=\Sigma~\!\!P_1\times \Sigma~\!\!P_2$.
\end{proposition}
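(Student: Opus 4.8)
The plan is to prove the two inclusions $\sigma(P_1)\times\sigma(P_2)\subseteq \sigma(P_1\times P_2)$ and $\sigma(P_1\times P_2)\subseteq \sigma(P_1)\times\sigma(P_2)$ separately; only the second will use first-countability. For the first, observe that the projections $\pi_i:P_1\times P_2\to P_i$ preserve all existing directed suprema, since suprema in a product poset are computed coordinatewise; hence each $\pi_i$ is Scott continuous by Lemma \ref{Scott-cont-charac}, so every $\pi_i^{-1}(O)$ with $O\in\sigma(P_i)$ is Scott open in $P_1\times P_2$. As the sets $\pi_1^{-1}(U)\cap\pi_2^{-1}(V)=U\times V$ (with $U\in\sigma(P_1)$, $V\in\sigma(P_2)$) form a base of the product topology, this yields $\sigma(P_1)\times\sigma(P_2)\subseteq\sigma(P_1\times P_2)$, an inclusion valid for arbitrary posets.

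For the nontrivial inclusion I would pass to the Scott-closed complement. Let $W\in\sigma(P_1\times P_2)$ and $F=(P_1\times P_2)\setminus W$, so $F$ is a lower set closed under existing directed suprema. Since $\Sigma~\!\!P_1$ and $\Sigma~\!\!P_2$ are first-countable, the product space $\Sigma~\!\!P_1\times\Sigma~\!\!P_2$ is first-countable, and in a first-countable space a set is closed iff it is sequentially closed. Thus it suffices to establish the following Key Lemma: if $(x_n,y_n)\in F$ for all $n$ and $(x_n,y_n)\to(a,b)$ in $\Sigma~\!\!P_1\times\Sigma~\!\!P_2$ — equivalently $x_n\to a$ in $\Sigma~\!\!P_1$ and $y_n\to b$ in $\Sigma~\!\!P_2$ — then $(a,b)\in F$. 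Fixing decreasing countable neighbourhood bases $U_1\supseteq U_2\supseteq\cdots$ at $a$ and $V_1\supseteq V_2\supseteq\cdots$ at $b$, convergence lets me pass to a subsequence with $x_n\in U_n$ and $y_n\in V_n$; in particular the witnesses $(x_n,y_n)\in(U_n\times V_n)\cap F$ show that no basic box $U_n\times V_n$ is contained in $W$.

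The heart of the matter is to manufacture from these witnesses a directed subset $\mathcal D\subseteq F$ whose supremum exists and satisfies $(a,b)\le\bigvee\mathcal D$. Granting this, directed-sup-closedness of $F$ gives $\bigvee\mathcal D\in F$, and then $(a,b)\in F$ because $F$ is a lower set — exactly the required conclusion. I expect this extraction of a genuine directed set to be the main obstacle: the witnesses $x_n$ (resp.\ $y_n$) need be neither monotone nor directed, so one cannot simply take their joins, which need not exist; moreover the Scott closure of $\{(x_n,y_n)\}$ is a priori too small, being computed in the finer topology $\sigma(P_1\times P_2)$ rather than in the product, so the naive ``limit lies in the closure'' argument fails. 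My plan is to exploit that $x_n\to a$ forces $a\in\overline{\{x_m:m\ge k\}}$ (Scott closure in $P_1$) for every $k$, and symmetrically for $b$, and then to feed the neighbourhood data into Rudin's Lemma (Corollary \ref{rudin}), or its topological form Lemma \ref{topological-Rudin-lemma} applied in the Smyth power space, in order to replace the unstructured tails by directed sets lying suitably below $a$ and $b$. First-countability enters decisively here by keeping every family countable: once a countable directed set is in hand, Lemma \ref{count-directed-set-sup-count-chain-sup} straightens it into an ascending chain, and a diagonal interleaving of the two chains should produce the desired directed subset of $F$ with supremum above $(a,b)$.

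Finally I would verify the degenerate cases (when some $U_n\times V_n$ already lies in $W$, so the hypothesis of the Key Lemma is vacuous and a box is found directly, or when $a,b$ are maximal) and confirm that a product-open box $U\times V\subseteq W$ is thereby available at each point of $W$, giving $W\in\sigma(P_1)\times\sigma(P_2)$. As a reserve organisation of the crux I would keep the \emph{sectioning} reduction: it suffices to find a Scott-open $U\ni a$ for which $\{y\in P_2:U\times\{y\}\subseteq W\}$ is Scott open, since this set is automatically an upper set containing $b$ once $U$ is small enough. Its only failure mode is the non-uniform lifting of a directed supremum in $P_2$ across all $x\in U$, and first-countability of $P_1$ is precisely what permits a diagonalisation over the base $\{U_n\}$ to rule such a failure out.
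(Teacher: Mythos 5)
Your outer reductions are correct and in fact coincide with the skeleton of the argument the paper is citing (the paper itself gives no proof of Proposition \ref{de-Brecht}, deferring to de Brecht's argument in Goubault-Larrecq's blog): the inclusion $\sigma(P_1)\times\sigma(P_2)\subseteq\sigma(P_1\times P_2)$ via Scott continuity of the projections is standard, and since $\Sigma~\!\!P_1\times\Sigma~\!\!P_2$ is first-countable, hence sequential, everything does reduce to your Key Lemma that a Scott-closed $F\subseteq P_1\times P_2$ is sequentially closed under coordinatewise Scott convergence. The genuine gap is your plan for the Key Lemma: manufacturing a directed $\mathcal D\subseteq F$ with $\bigvee\mathcal D$ existing and $(a,b)\leq\bigvee\mathcal D$. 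None of the tools you invoke can produce it. In a non-continuous poset, topological convergence $x_n\to a$ in the Scott topology carries no order-theoretic witness at all: there need be no directed set attached to the tails whose supremum dominates $a$ (liminf-style convergence coincides with topological Scott convergence only in continuous posets), so the raw material for your extraction may simply not exist. Rudin's Lemma (Corollary \ref{rudin}) needs a filtered family of finitely generated upper sets meeting a lower set, which is nowhere in sight. And even granting coordinatewise directed sets $D_1, D_2$ with $\bigvee D_1\geq a$ and $\bigvee D_2\geq b$, any directed set built from them (e.g.\ $D_1\times D_2$ or an interleaving) has no reason to lie in $F$: the set $F$ contains only the diagonal witnesses $(x_n,y_n)$, and closure under directed suprema applies only to directed subsets \emph{of} $F$. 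The same non-uniformity defeats your fallback: for a Scott-\emph{open} $U$, the set $\{y\in P_2: U\times\{y\}\subseteq W\}$ fails to be Scott open precisely because, given directed $D$ with $U\times\{\bigvee D\}\subseteq W$, there is no way to choose a single $d\in D$ working for all $x\in U$ simultaneously, and no diagonalization over a countable base is known to repair this; an argument of that shape would not visibly distinguish the present situation from Isbell-type counterexamples where the conclusion fails.

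The missing idea is the \emph{compact completed sequence}, and it is compactness, not countability of bases, that powers the Key Lemma. Assume $(a,b)\in W=(P_1\times P_2)\setminus F$. The slice $\{x\in P_1:(x,b)\in W\}$ is Scott open (directed suprema with a fixed second coordinate are computed in the slice), so $(x_n,b)\in W$ for all $n\geq N$. The set $K=\{a\}\cup\{x_n:n\geq N\}$ is compact in $\Sigma~\!\!P_1$, being a convergent sequence together with its limit. Now $V=\{y\in P_2: K\times\{y\}\subseteq W\}$ is Scott open: it is clearly an upper set, and if $D\subseteq P_2$ is directed with $\bigvee D\in V$, then each $x\in K$ lies in the Scott-open slice $W^{d}=\{x:(x,d)\in W\}$ for some $d\in D$; since $\{W^{d}:d\in D\}$ is a directed family of open sets covering $K$, compactness of $K$ yields a single $d\in D$ with $K\subseteq W^{d}$, i.e.\ $d\in V$. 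As $b\in V$ and $y_n\to b$, there is $M\geq N$ with $y_M\in V$, whence $(x_M,y_M)\in W$, contradicting $(x_M,y_M)\in F$. Note that this Key Lemma holds for \emph{arbitrary} posets; first-countability is used only where you correctly placed it, to upgrade sequential openness in the product topology to openness. So your decomposition is sound, but the crux requires replacing your directed-set extraction by this compactness argument.
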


From Corollary \ref{Scott-topology-on-product-is-Scott-topology-product-imply-sober} and Proposition \ref{de-Brecht} we deduce the following.

\begin{corollary}\label{cor-de-Brecht}  If $P$ is a dcpo and a sup semilattice (especially, a complete lattice) such that $\Sigma~\!\! P$ is first-countable, then the sup operation $\vee :\Sigma~\!\!P\times \Sigma~\!\! P \rightarrow \Sigma~\!\!P$ is continuous, and hence $\Sigma~\!\!P$ is sober.
\end{corollary}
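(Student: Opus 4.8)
The plan is to combine the de Brecht observation (Proposition \ref{de-Brecht}) with the sobriety criterion already recorded in Corollary \ref{Scott-topology-on-product-is-Scott-topology-product-imply-sober}; there is essentially nothing new to prove beyond invoking these two results in the right order. First I would observe that since $\Sigma~\!\!P$ is first-countable, taking $P_1=P_2=P$ in Proposition \ref{de-Brecht} immediately yields the crucial topological identity $\Sigma~\!\!(P\times P)=\Sigma~\!\!P\times\Sigma~\!\!P$. This is the only place where the first-countability hypothesis is actually used.

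Next, to obtain the continuity of the sup operation, I would recall the remark preceding Corollary \ref{Scott-topology-on-product-is-Scott-topology-product-imply-sober}: for a sup semilattice the map $\vee:\Sigma~\!\!(P\times P)\rightarrow\Sigma~\!\!P$ is always Scott continuous. Indeed, by Lemma \ref{Scott-cont-charac} it suffices to verify that $\vee$ preserves existing directed suprema, and this holds because $\vee\left(\bigvee_{d\in D}(a_d,b_d)\right)=\bigvee_{d\in D}(a_d\vee b_d)$ for every directed family $\{(a_d,b_d) : d\in D\}$ whose supremum exists in $P\times P$. Composing this fact with the identity $\Sigma~\!\!(P\times P)=\Sigma~\!\!P\times\Sigma~\!\!P$ of the previous step shows at once that $\vee:\Sigma~\!\!P\times\Sigma~\!\!P\rightarrow\Sigma~\!\!P$ is continuous, which is the first assertion of the corollary.

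Finally, with the sup operation jointly continuous, Proposition \ref{jointly-Scott-continuous-Scott-topology-sober} delivers the sobriety of $\Sigma~\!\!P$; equivalently, feeding the topological identity of the first step directly into Corollary \ref{Scott-topology-on-product-is-Scott-topology-product-imply-sober} yields the same conclusion. I expect no genuine obstacle here, since all the technical content is packaged inside Proposition \ref{de-Brecht} (whose own proof exploits first-countability to produce a suitable countable neighbourhood base and then applies the characterization of Scott-open subsets of a product). The only point deserving a moment's attention is confirming that the two factor spaces in Proposition \ref{de-Brecht} may legitimately be taken equal, which is immediate.
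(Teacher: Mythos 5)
Your proposal is correct and takes exactly the paper's route: the paper obtains this corollary in one line from Proposition \ref{de-Brecht} (applied with $P_1=P_2=P$) together with Corollary \ref{Scott-topology-on-product-is-Scott-topology-product-imply-sober}, and your argument simply unwinds the latter via Lemma \ref{Scott-cont-charac} and Proposition \ref{jointly-Scott-continuous-Scott-topology-sober}. Your verification that $\vee$ preserves directed suprema coordinatewise is the same observation the paper makes in the remark preceding Corollary \ref{Scott-topology-on-product-is-Scott-topology-product-imply-sober}, so nothing further is needed.
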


The theory of compactly generated spaces plays an important role in general and algebraic topology. For Hausdorff spaces compactly generated spaces are those for which the topology of a space is generated by its compact subsets in the sense that a
subset if open if and only if its intersection with each compact subset
is open in the topology of that compact subset. In \cite{Lawson-2019}, Lawson developed a notion of locally compact generation for non-Hausdorff spaces, especially for $T_0$ spaces. In this regard, the case that countably many locally
compact subspaces suffice is particularly interesting.

\begin{definition}\label{def-lc-omega-space} (\cite{Lawson-2019})
A topological space $X$ is called an $\ell c_{\omega}$-\emph{space} if there is a sequence $(K_n)_{n\in \mathbb{N}}$ of locally compact subsets of $X$ satisfying the following conditions:

\smallskip
$(a)$\, $K_{n}\subseteq K_{m}$ whenever $n\leq m$;

\smallskip
$(b)$\, $X=\bigcup_{n}K_{n}$;

\smallskip
$(c)$\, the inclusion $i_{n}:K_{n}\rightarrow K_{n+1}$ is continuous for each $n$;

\smallskip
$(d)$\,  a subset $U$ of $X$ is open if and only if $U\cap K_{n}$ is open in $K_n$ for each $n$.
\end{definition}

Note that we allow the possibility that the topology of $K_n$ may differ from the subspace topology. However, condition (d)
ensures that the topology is at least as fine as the subspace topology.

Now we will show that the product of two $\ell c_{\omega}$-spaces is again a $\ell c_{\omega}$-space.
The proof relies heavily on Wallace's Lemma (see, for example, \cite[Theorem 3.2.10]{Engelking-1989}). We only need the case of product of two spaces.

\begin{lemma}\label{Wallace lemma} (Wallace's Lemma) If $K_i$ is a compact subset of a topological space $X_i$ ($i=1, 2$), then for every open subset $W$ of the product space $X_1\times X_2$ which contains the set $K_1\times K_2$, there exist open sets $U_i$ of $X_i$ ($i=1, 2$) such that $K_1\times K_2\subseteq U_1\times U_2\subseteq W$.
\end{lemma}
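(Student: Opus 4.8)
The plan is to prove Wallace's Lemma directly, using the compactness of the factors via a standard two-stage tube-lemma argument. Fix an open set $W\subseteq X_1\times X_2$ with $K_1\times K_2\subseteq W$. The goal is to produce open rectangles that cover $K_1\times K_2$ and stay inside $W$, then shrink them to a single product $U_1\times U_2$.

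First I would handle the case of a single point in the first factor. Fix $x\in K_1$. For each $y\in K_2$, since $(x,y)\in W$ and $W$ is open in the product, there are basic open sets $V_{x,y}\ni x$ in $X_1$ and $O_{x,y}\ni y$ in $X_2$ with $V_{x,y}\times O_{x,y}\subseteq W$. The family $\{O_{x,y}:y\in K_2\}$ covers $K_2$, so by compactness of $K_2$ finitely many $O_{x,y_1},\dots,O_{x,y_k}$ suffice. Setting $V_x=\bigcap_{j=1}^{k}V_{x,y_j}$ (a finite intersection, hence open, containing $x$) and $O_x=\bigcup_{j=1}^{k}O_{x,y_j}\supseteq K_2$, one checks that $V_x\times O_x\subseteq W$: any point $(u,v)$ with $u\in V_x$ and $v\in O_x$ has $v\in O_{x,y_j}$ for some $j$, and then $u\in V_x\subseteq V_{x,y_j}$ gives $(u,v)\in V_{x,y_j}\times O_{x,y_j}\subseteq W$. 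This is the tube lemma: a whole ``tube'' $V_x\times O_x$ around $\{x\}\times K_2$ lies in $W$, with $O_x$ engulfing all of $K_2$.

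Next I would run the same compactness argument on the first factor. The sets $\{V_x:x\in K_1\}$ form an open cover of $K_1$, so by compactness of $K_1$ there are $x_1,\dots,x_m$ with $K_1\subseteq\bigcup_{i=1}^{m}V_{x_i}$. I then set
$$U_1=\bigcup_{i=1}^{m}V_{x_i}\qquad\text{and}\qquad U_2=\bigcap_{i=1}^{m}O_{x_i}.$$
Here $U_1$ is open and contains $K_1$, while $U_2$ is a finite intersection of open sets each containing $K_2$, hence open and containing $K_2$; thus $K_1\times K_2\subseteq U_1\times U_2$. The inclusion $U_1\times U_2\subseteq W$ follows because any $(u,v)\in U_1\times U_2$ has $u\in V_{x_i}$ for some $i$, and $v\in U_2\subseteq O_{x_i}$, so $(u,v)\in V_{x_i}\times O_{x_i}\subseteq W$ by the previous step.

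The argument is entirely standard, so I do not anticipate a genuine obstacle; the only point demanding care is the asymmetric roles of the two factors in the tube construction, namely taking a \emph{union} of the $V$-sets (one per covering point of $K_1$) but an \emph{intersection} of the $O$-sets (so that the single $U_2$ still contains $K_2$ while remaining compatible with every chosen tube). Keeping track of which direction uses a finite union and which uses a finite intersection is the whole content; once $U_1$ and $U_2$ are defined as above, verifying $K_1\times K_2\subseteq U_1\times U_2\subseteq W$ is the routine check indicated.
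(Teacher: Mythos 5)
Your proof is correct and complete: the single-point tube step ($V_x\times O_x\subseteq W$ with $O_x\supseteq K_2$) followed by the finite subcover of $K_1$, taking the union of the $V_{x_i}$ and the intersection of the $O_{x_i}$, is exactly the standard argument, and it nowhere uses separation axioms, which matters since the paper works with non-Hausdorff spaces. The paper itself gives no proof of this lemma, referring instead to \cite[Theorem 3.2.10]{Engelking-1989}, and your argument is essentially the proof found in that cited source.
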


\begin{proposition}\label{lc-omega-space-two-product} (\cite{Lawson-2019})
If $X, Y$ are two $\ell c_{\omega}$-spaces, then the product space $X\times Y$ is an $\ell c_{\omega}$-space.
\end{proposition}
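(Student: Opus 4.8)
The plan is to build the witnessing sequence for $X\times Y$ directly from the witnessing sequences of the two factors. Let $(K_n)_{n\in\mathbb N}$ be a sequence of locally compact subsets realizing $X$ as an $\ell c_\omega$-space, and let $(L_n)_{n\in\mathbb N}$ be the analogous sequence for $Y$. The obvious candidate is to set $M_n=K_n\times L_n$, where $K_n\times L_n$ carries the product of the (possibly non-subspace) topologies witnessing local compactness of $K_n$ and $L_n$. First I would verify conditions $(a)$, $(b)$, $(c)$ of Definition~\ref{def-lc-omega-space}, which are essentially formal: the monotonicity $M_n\subseteq M_m$ for $n\le m$ follows from $K_n\subseteq K_m$ and $L_n\subseteq L_m$; the covering property $X\times Y=\bigcup_n M_n$ follows from $X=\bigcup_n K_n$ and $Y=\bigcup_n L_n$ together with the fact that the sequences are increasing; and continuity of the inclusions $M_n\to M_{n+1}$ is just the product of the two continuous inclusions $i_n^X\times i_n^Y$. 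I would also record that each $M_n$ is locally compact, being a finite product of locally compact spaces.

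\smallskip

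The substantive work is condition $(d)$: I must show that a subset $W\subseteq X\times Y$ is open in the product topology $\mathcal O(X)\times\mathcal O(Y)$ if and only if $W\cap M_n$ is open in $M_n=K_n\times L_n$ for every $n$. The forward direction is immediate, since the restriction of a product-open set to $M_n$ is open in the product topology on $M_n$, which is at least as coarse as the witnessing topology by the final remark in Definition~\ref{def-lc-omega-space}. For the reverse direction, suppose $W\cap M_n$ is open in $M_n$ for all $n$, and fix a point $(x,y)\in W$; I want to produce a basic product-open neighborhood of $(x,y)$ inside $W$. Here is where I would invoke Wallace's Lemma (Lemma~\ref{Wallace lemma}): using local compactness of the factors, I can trap $(x,y)$ in a product $C\times D$ of compact sets (compact in the $K_n$ and $L_n$ topologies) that sits inside $W\cap M_n$ for a suitable $n$, and Wallace's Lemma then separates this compact box into a product $U_1\times U_2$ of open sets with $C\times D\subseteq U_1\times U_2\subseteq W\cap M_n$. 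The remaining point is to promote the openness of the $U_i$ in $K_n,L_n$ to openness in $X,Y$: this is exactly where condition $(d)$ for the individual factors enters, so that a neighborhood manufactured at level $n$ really yields a product-open neighborhood of $(x,y)$ in $X\times Y$.

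\smallskip

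The hard part will be the bookkeeping in the reverse direction of $(d)$, specifically aligning the index $n$ coming from the coordinates $x\in K_{n_1}$ and $y\in L_{n_2}$ (take $n=\max\{n_1,n_2\}$ and use monotonicity), and then transferring openness back and forth between the three topologies on each $K_n$ and $L_n$ — the witnessing topology, the subspace topology, and the ambient topology of $X,Y$ — without conflating them. In particular, Wallace's Lemma produces sets open in the witnessing (locally compact) topologies of $K_n,L_n$, and I must apply condition $(d)$ of the factors to conclude that the corresponding neighborhood is genuinely open in the product of $X$ and $Y$; I expect this transfer, rather than any topological difficulty, to be the delicate step. Once these neighborhoods are assembled, every point of $W$ has a product-open neighborhood contained in $W$, so $W\in\mathcal O(X)\times\mathcal O(Y)$, completing the verification of $(d)$ and hence the proof.
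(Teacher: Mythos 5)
Your choice $M_n=K_n\times L_n$ and your reduction to condition $(d)$ match the paper's proof (Lawson's), and the forward direction of $(d)$ is fine. But the reverse direction has a genuine gap at exactly the step you flag as delicate, and the mechanism you propose for it fails: a \emph{single} application of Wallace's Lemma at one level $n$ produces sets $U_1$ open in $K_n$ and $U_2$ open in $L_n$, and condition $(d)$ for the factors cannot ``promote'' these to open sets of $X$ and $Y$. Condition $(d)$ of Definition \ref{def-lc-omega-space} characterizes $U\subseteq X$ as open only when $U\cap K_m$ is open in $K_m$ for \emph{every} $m$; a set manufactured at level $n$ gives you no control over its traces on $K_{n+1},K_{n+2},\dots$ (the witnessing topologies may be strictly finer than the subspace topologies, and $K_n$ itself need not be open in $X$ --- in the intended application to $\ell c_{\omega}$-posets the $K_n$ are merely $d$-sub-posets). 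Applied at one level, condition $(d)$ yields nothing, so your closing step does not go through as stated. A minor additional remark: at the initial level $n$ Wallace's Lemma is not even needed, since $M_n$ carries the product topology, so $W\cap M_n$ already contains a basic open box around $(x,y)$; Wallace becomes essential only when passing between levels, which is precisely where your argument stops.

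The paper supplies the missing idea: an infinite inductive bootstrap through all levels. Starting from $(x,y)\in U_n\times V_n\subseteq C_n\times D_n\subseteq W\cap M_n$ with $U_n$ open and $C_n$ compact in $K_n$ (similarly $V_n,D_n$ in $L_n$), one notes that $C_n$ and $D_n$ are compact in $K_{n+1}$ and $L_{n+1}$ by continuity of the inclusions, applies Lemma \ref{Wallace lemma} \emph{at level} $n+1$ to trap $C_n\times D_n$ in a relatively open box $P_{n+1}\times Q_{n+1}\subseteq W\cap M_{n+1}$, interpolates by local compactness of $K_{n+1}$ (a finite cover of $C_n$ by open-inside-compact pairs) to obtain $C_n\subseteq U_{n+1}\subseteq C_{n+1}\subseteq P_{n+1}$ with $U_{n+1}$ open and $C_{n+1}$ compact in $K_{n+1}$, similarly $D_n\subseteq V_{n+1}\subseteq D_{n+1}\subseteq Q_{n+1}$, and iterates, producing increasing sequences $(U_{n+k})_{k\geq 0}$, $(V_{n+k})_{k\geq 0}$ sandwiched between compacta inside $W$. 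Only for the unions $U=\bigcup_{k\geq 0}U_{n+k}$ and $V=\bigcup_{k\geq 0}V_{n+k}$ can condition $(d)$ of the factors be invoked: $U\cap K_{n+j}=\bigcup_{k\geq j}(U_{n+k}\cap K_{n+j})$ is open in $K_{n+j}$ because each term is open there by continuity of the inclusions, and for $m<n$ one has $U\cap K_m=(U\cap K_n)\cap K_m$, open for the same reason; hence $U$ is open in $X$, $V$ is open in $Y$, and $U\times V\subseteq W$. So Wallace's Lemma must be applied cofinally often, and it is this limit construction --- not a one-step transfer at level $n$ --- that makes condition $(d)$ applicable; this inductive expansion is the content your proposal is missing.
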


\begin{proof} It was proved by Lawson in \cite{Lawson-2019}. For the sake of completeness, we present his proof here.

Since $X$ (resp., $Y$) is an $\ell c_{\omega}$-spaces, there is a sequence $(J_n)_{n\in \mathbb{N}}$ of locally compact subsets of $X$ (resp., a sequence $(K_n)_{n\in \mathbb{N}}$ of locally compact subsets of $Y$) satisfying Conditions (a)-(d) of Definition \ref{def-lc-omega-space}.

For each $n\in \mathbb{N}$, set $L_{n}=J_{n}\times K_{n}$ (equipped with the product topology). Then $(L_n)_{n\in \mathbb{N}}$ is a sequence of locally compact subsets of $X\times Y$ satisfying Conditions (a)-(c) of Definition \ref{def-lc-omega-space}. Now we verify Condition (d). Define $$\tau =\{W\subseteq X\times Y : W\cap L_{n}\mbox{ is open in }~L_{n} \mbox{ for all}~n\}.$$ It is straightforward to verify that $\tau$ is a topology on $X\times Y$. If $U$ is an open subset of $X$ and $V$ is an open subset of $Y$, then $U\cap J_{n}$ is open in $J_{n}$ and $V\cap K_{n}$ is open in $K_{n}$. Hence $(U\times V)\cap L_{n}=(U\cap J_{n})\times (V\cap K_{n})$ is open in $L_{n}=J_{n}\times K_{n}$. It follows that the product open sets $U\times V$ are in $\tau$, and since such sets form a basis for the product topology, $\tau$ contains the product topology.

Conversely suppose that $W\in \tau$ and let $(a,b)\in W$. Since $X\times Y=\bigcup_{n}L_{n}$, it follows that $(a,b)\in J_{n}\times K_{n}=L_{n}$ for some $n$. Since $W\cap L_{n}$ is open, we can pick sets $P_{n}$ open in $J_{n}$ and $Q_{n}$ open in $K_{n}$ such that $(a,b)\in P_{n}\times Q_{n}\subseteq W\cap L_{n}$. Since $J_{n}$ and $K_{n}$ are locally compact, we can pick neighborhoods $U_{n}$ of $a$ in $J_{n}$ and $V_{n}$ of $b$ in $K_{n}$ and sets $C_{n}$ and $D_{n}$ compact in $J_{n}$ and $K_{n}$ resp. with $$(a,b)\in U_{n}\times V_{n}\subseteq C_{n}\times D_{n}\subseteq P_{n}\times Q_{n}\subseteq W\cap L_{n}.$$

By Definition \ref{def-lc-omega-space} (c), $C_{n}$ is compact in $J_{n+1}$ and $D_{n}$ is compact in $K_{n+1}$. By Lemma \ref{Wallace lemma}, there exists $P_{n+1}$ open in $J_{n+1}$ and $Q_{n+1}$ open in $K_{n+1}$ such that $$C_{n}\times D_{n}\subseteq P_{n+1}\times Q_{n+1}\subseteq W\cap J_{n+1}\times K_{n+1}=W\cap L_{n+1}.$$ By local compactness of $J_{n+1}$, for each $x\in C_{n}$ choose a set $U_{x}$ open in $J_{n+1}$ and $C_{x}$ compact in $J_{n+1}$ such that $x\in U_{x}\subseteq C_{x}\subseteq P_{n+1}$. Finitely many of the $U_{x}$ cover $C_{n}$; let $U_{n+1}$ be their union and $C_{n+1}$ be the union of the corresponding $C_{x}$. Then $C_{n+1}$ is compact in $J_{n+1}$ and $C_{n}\subseteq U_{n+1}\subseteq C_{n+1}\subseteq P_{n+1}$. Pick $V_{n+1}$ and $D_{n+1}$ similarly in $K_{n+1}$ such that $D_{n}\subseteq V_{n+1}\subseteq D_{n+1}\subseteq Q_{n+1}$ with $V_{n+1}$ open in $K_{n+1}$ and $D_{n+1}$ compact in $K_{n+1}$. Then $U_{n+1}\times V_{n+1}\subseteq P_{n+1}\times Q_{n+1}\subseteq W$.

We can continue the process of the preceding paragraph by induction obtaining increasing sequences of relative open sets $(U_{n+k})_{k\geq 0}$ and $(V_{n+k})_{k\geq 0}$ and compact sets $(C_{n+k})_{k\geq 0}$ and $(D_{n+k})_{k\geq 0}$ (from Definition \ref{def-lc-omega-space} (c)), where $$C_{n+k}\times D_{n+k}\subseteq U_{n+k+1}\times V_{n+k+1}\subseteq C_{n+k+1}\times D_{n+k+1}\subseteq U_{n+k}\times V_{n+k}\subseteq W$$ for all $k\geq 0$.

We set $U=\bigcup\limits_{k=0}^{\infty}U_{n+k}$ and $V=\bigcup\limits_{k=0}^{\infty}V_{n+k}$. For $j<k$ we note $U_{n+k}\cap J_{n+j}$ is open in $J_{n+j}$ by Definition \ref{def-lc-omega-space} (c). Further, since $(U_{n+k})_{k\geq 0}$ is increasing, we obtain $$U\cap J_{n+j}=\bigcup_{k}(U_{n+k}\cap J_{n+j})=\bigcup_{k\geq j}(U_{n+k}\cap U_{n+j})=U_{n+j}.$$ Each member of the righthand union is open in $J_{n+j}$ (again Definition \ref{def-lc-omega-space} (c)), and hence the union is. For $m<n$, $U\cap J_{m}=U\cap (J_{m}\cap J_{n})=(U\cap J_{n})\cap J_{m}$, which is open in $J_{m}$ by Definition \ref{def-lc-omega-space} (c) and the preceding calculation. Since $(X,\{J_{n}\})$ is an $\ell c_{\omega}$-space, we conclude that $U$ is an open subset of $X$. Similarly, $V$ is an open set of $Y$. From the preceding paragraph we have $U\times V\subseteq W$. We have thus found an open set $U\times V$ of $(a,b)$ in the product topology contained in $W$. It follows that the product topology contains the $\tau$-topology, which completes the proof.
\end{proof}

\begin{definition}\label{def-sub-dcpo} A \emph{sub-dcpo} $Q$ of a dcpo $P$ is a subset of $P$ with the restricted order such that if $D$ is a directed subset of $Q$, then $\vee_P D\in Q$. More generally, a \emph{d-sub-poset} $S$ of a poset $T$ is a subset of $T$ with the restricted order such that if $D$ is a directed subset of $S$ for which $\vee_T D$ exists, then $\vee_T D\in S$.
\end{definition}

\begin{definition}\label{def-lc-omega-dcpo}  (\cite{Lawson-2019})
A poset (resp., dcpo) $P$ is said to
be an $\ell c_{\omega}$-\emph{poset} (resp., $\ell c_{\omega}$-\emph{dcpo}) if there exists a sequence $(K_n)_{n\in \mathbb{N}}$ of $d$-sub-posets (resp., sub-dcpos) satisfying the following conditions:

\smallskip
$(a)$\, $\Sigma~\!\!K_{n}$ is locally compact for all $n\in \mathbb{N}$;

\smallskip

$(b)$\, $K_{n}\subseteq K_{m}$ whenever $n\leq m$;

\smallskip
$(c)$\, $P=\bigcup_{n}K_{n}$;

\smallskip
$(d)$\, a subset $U$ of $P$ is Scott open if and only if $U\cap K_{n}$ is Scott open in $K_n$ for each $n$.
\end{definition}

\begin{remark}\label{Lawson-condition-c}
Since $K_{n}$ and $K_{n+1}$ are $d$-sub-posets of $P$, the inclusion of $K_{n}$ into $K_{n+1}$ is automatically (Scott) continuous, so condition (c)
of Definition \ref{def-lc-omega-space} is automatically satisfied. We note that similarly the
inclusion of $K_n$ into $P$ is also (Scott) continuous.
\end{remark}

\begin{theorem}\label{Scott-topology-on-product-of-two-lc-omega-poset} (\cite{Lawson-2019})
Let $P, Q$ be two $\ell c_{\omega}$-posets. Then $\Sigma~\!\! (P\times Q)=\Sigma~\!\!P\times \Sigma~\!\!Q$.
\end{theorem}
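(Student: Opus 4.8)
The plan is to realize $\Sigma~\!\!P\times\Sigma~\!\!Q$ as an $\ell c_{\omega}$-space and to compare it with $\Sigma~\!\!(P\times Q)$ through the locally compact ``building blocks''. Let $(J_n)_{n\in\mathbb N}$ and $(K_n)_{n\in\mathbb N}$ be the sequences of $d$-sub-posets witnessing that $P$ and $Q$ are $\ell c_{\omega}$-posets (Definition \ref{def-lc-omega-dcpo}), so that each $\Sigma~\!\!J_n$ and each $\Sigma~\!\!K_n$ is locally compact. First, the inclusion $\sigma(P)\times\sigma(Q)\subseteq\sigma(P\times Q)$ is automatic: the two projections preserve existing directed sups, hence are Scott continuous by Lemma \ref{Scott-cont-charac}, so the subbasic product-open sets $U\times Q$ and $P\times V$ (with $U\in\sigma(P)$, $V\in\sigma(Q)$), being their preimages of Scott-open sets, are Scott open in $P\times Q$. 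Thus only the reverse inclusion $\sigma(P\times Q)\subseteq\sigma(P)\times\sigma(Q)$ requires work.

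For the reverse inclusion I would first note that $\Sigma~\!\!P$ is an $\ell c_{\omega}$-space with the locally compact pieces $\Sigma~\!\!J_n$: conditions (a)--(d) of Definition \ref{def-lc-omega-space} follow at once from the corresponding conditions in Definition \ref{def-lc-omega-dcpo}, using Remark \ref{Lawson-condition-c} to see that the relevant inclusions are (Scott) continuous. Likewise $\Sigma~\!\!Q$ is an $\ell c_{\omega}$-space with pieces $\Sigma~\!\!K_n$. By Proposition \ref{lc-omega-space-two-product}, the product $\Sigma~\!\!P\times\Sigma~\!\!Q$ is then an $\ell c_{\omega}$-space whose defining sequence is $L_n=\Sigma~\!\!J_n\times\Sigma~\!\!K_n$ (the product topology on $J_n\times K_n$). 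By condition (d) of Definition \ref{def-lc-omega-space} applied to this structure, a set $W\subseteq P\times Q$ is open in $\Sigma~\!\!P\times\Sigma~\!\!Q$ if and only if $W\cap L_n$ is open in $L_n$ for every $n$. Hence it suffices to show, for each Scott-open $W\subseteq P\times Q$ and each $n$, that $W\cap(J_n\times K_n)$ is open in $L_n=\Sigma~\!\!J_n\times\Sigma~\!\!K_n$.

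To this end I would verify that $J_n\times K_n$ is a $d$-sub-poset of $P\times Q$: a directed subset of a product has a sup precisely when both of its projections do, and then the sup is computed coordinatewise, so the defining closure property of $J_n\times K_n$ is inherited from those of $J_n$ and $K_n$. Consequently the inclusion $J_n\times K_n\hookrightarrow P\times Q$ is Scott continuous (Remark \ref{Lawson-condition-c}), so the restriction $W\cap(J_n\times K_n)$ is Scott open in the poset $J_n\times K_n$. Now comes the crucial leverage: since $\Sigma~\!\!J_n$ is locally compact, Corollary \ref{Scott-topology-product-cor} yields $\Sigma~\!\!(J_n\times K_n)=\Sigma~\!\!J_n\times\Sigma~\!\!K_n=L_n$. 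Therefore $W\cap(J_n\times K_n)$, being Scott open in $J_n\times K_n$, is open in $L_n$. As this holds for every $n$, condition (d) above gives that $W$ is open in $\Sigma~\!\!P\times\Sigma~\!\!Q$, establishing $\sigma(P\times Q)\subseteq\sigma(P)\times\sigma(Q)$ and hence the desired equality.

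The main obstacle, and the real content, is exactly the matching step of the previous paragraph: identifying the $\ell c_{\omega}$-space building blocks $L_n$ of $\Sigma~\!\!P\times\Sigma~\!\!Q$ with the intrinsic Scott spaces $\Sigma~\!\!(J_n\times K_n)$, which is where the local compactness of each $\Sigma~\!\!J_n$ is consumed via Corollary \ref{Scott-topology-product-cor}. The subsidiary points demanding care are the bookkeeping of which topology each piece $J_n$ (its intrinsic Scott topology, possibly finer than the subspace topology) actually carries, and the verification that restricting a Scott-open subset of $P\times Q$ to the $d$-sub-poset $J_n\times K_n$ keeps it Scott open; both are handled by the inclusion-continuity asserted in Remark \ref{Lawson-condition-c}.
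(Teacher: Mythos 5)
Your proposal is correct and takes essentially the same route as the paper's proof (taken from Lawson \cite{Lawson-2019}): both reduce the problem via Proposition \ref{lc-omega-space-two-product} to showing that a Scott-open $W\subseteq P\times Q$ meets each block $J_n\times K_n$ in a set open for $\Sigma~\!\!J_n\times \Sigma~\!\!K_n$, using Scott continuity of the inclusion of the $d$-sub-poset $J_n\times K_n$ and then Corollary \ref{Scott-topology-product-cor} to identify $\Sigma~\!\!(J_n\times K_n)$ with the product of the Scott topologies. The only cosmetic difference is that you consume local compactness of $\Sigma~\!\!J_n$ where the paper cites that of $\Sigma~\!\!K_n$; since the corollary needs only one locally compact factor, this is immaterial.
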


\begin{proof} The following proof is taken from \cite{Lawson-2019}. Let $(J_n)_{n\in \mathbb{N}}$ resp. $(K_n)_{n\in \mathbb{N}}$ be a sequence of $d$-sub-posets satisfying Conditions (a)-(d) of Definition \ref{def-lc-omega-dcpo} for $P$ resp. $Q$.

It follows from Proposition \ref{lc-omega-space-two-product} that the product topology $\sigma (P)\times \sigma (Q)$ is equal to the topology $$\tau =\{W\subseteq P\times Q:\forall n, W\cap (J_{n}\times K_{n}) \mbox{ is open in } J_{n}\times K_n\}.$$
Since the Scott topology of the product always contains the product of the Scott topologies, it suffices to show that a Scott open set of $P\times Q$ is $\tau$-open.

Let $W$ be a Scott-open subset of $P\times Q$. As noted in Remark the inclusions $j_n : J_n\rightarrow P$ and $k_n:K_{n}\rightarrow Q$ are Scott continuous. It follows directly that $j_{n}\times k_{n}:J_{n}\times K_{n}\rightarrow P\times Q$ preserves directed sups and hence is Scott continuous, where both domain and codomain are given their Scott topologies. Thus $W\cap (J_{n}\times K_{n})$ is Scott-open in $J_{n}\times K_{n}$. By Corollary \ref{Scott-topology-product-cor}, the local compactness of $\Sigma~\!\!K_{n}$ implies that the Scott topology on $J_{n}\times K_{n}$ is equal to the product of the Scott topologies on $J_n$ and $K_n$ respectively. Thus $W$ is $\tau$-open.
\end{proof}

\begin{proposition}\label{lc-omega-poset-two-product}
If $P, Q$ are two $\ell f_{\omega}$-poset (resp., $\ell f_{\omega}$-dcpo), then the product $P\times Q$ is an $\ell f_{\omega}$-poset (resp., $\ell c_{\omega}$-dcpo).
\end{proposition}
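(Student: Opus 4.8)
The plan is to build a witnessing sequence for $P\times Q$ out of those for $P$ and $Q$, copying the construction used in Proposition~\ref{lc-omega-space-two-product} and Theorem~\ref{Scott-topology-on-product-of-two-lc-omega-poset}. Let $(J_n)_{n\in\mathbb N}$ and $(K_n)_{n\in\mathbb N}$ be sequences of $d$-sub-posets witnessing that $P$ and $Q$ are $\ell f_{\omega}$-posets, so that each $\Sigma~\!\!J_n$ and $\Sigma~\!\!K_n$ is locally hypercompact and conditions (a)--(d) (the locally hypercompact analogue of Definition~\ref{def-lc-omega-dcpo}) hold. Set $L_n=J_n\times K_n\subseteq P\times Q$. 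I would verify that $(L_n)_{n\in\mathbb N}$ witnesses that $P\times Q$ is an $\ell f_{\omega}$-poset, i.e.\ I would check that each $L_n$ is a $d$-sub-poset and that the four conditions hold.

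First I would dispose of the easy structural parts. Conditions (b) and (c) are immediate from $J_n\subseteq J_m$, $K_n\subseteq K_m$ and $P\times Q=\bigcup_n(J_n\times K_n)$ (using the chain property to absorb any pair $(p,q)$ into a common $L_n$). That each $L_n$ is a $d$-sub-poset follows from the coordinatewise computation of directed suprema in a product: if $D\subseteq L_n$ is directed and $\vee_{P\times Q}D$ exists, then it equals $(\vee_P\pi_1(D),\vee_Q\pi_2(D))$ with both coordinate suprema existing, and these lie in $J_n$, $K_n$ since $J_n$, $K_n$ are $d$-sub-posets; hence $\vee_{P\times Q}D\in L_n$ (in the dcpo case the coordinate suprema always exist, so each $L_n$ is a sub-dcpo).

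Next I would establish condition (a) via the sublemma that the product of two locally hypercompact spaces is locally hypercompact. Given $(x,y)$ and an open $W$, choose a basic open $U\times V$ with $(x,y)\in U\times V\subseteq W$, and pick $\uparrow F\in\mathbf{Fin}X$, $\uparrow G\in\mathbf{Fin}Y$ with $x\in\mathrm{int}\,\uparrow F\subseteq\uparrow F\subseteq U$ and $y\in\mathrm{int}\,\uparrow G\subseteq\uparrow G\subseteq V$. Since $F\times G$ is finite and $\uparrow(F\times G)=\uparrow F\times\uparrow G$ in the product order, while $\mathrm{int}\,\uparrow F\times\mathrm{int}\,\uparrow G$ is an open subset of $\uparrow(F\times G)$ containing $(x,y)$, one gets $(x,y)\in\mathrm{int}\,\uparrow(F\times G)\subseteq\uparrow(F\times G)\subseteq W$. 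Now each $\Sigma~\!\!K_n$, being locally hypercompact, is locally compact, so Corollary~\ref{Scott-topology-product-cor} gives $\Sigma~\!\!L_n=\Sigma~\!\!(J_n\times K_n)=\Sigma~\!\!J_n\times\Sigma~\!\!K_n$, a product of two locally hypercompact spaces, hence locally hypercompact; this is condition (a).

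The one delicate point is condition (d), and I would obtain it by reduction to the already-proved $\ell c_{\omega}$ machinery rather than re-running Wallace's Lemma. Since every locally hypercompact space is locally compact, $P$ and $Q$ are in particular $\ell c_{\omega}$-posets (the same sequences work), so by Definition~\ref{def-lc-omega-dcpo} and Remark~\ref{Lawson-condition-c} the spaces $\Sigma~\!\!P$, $\Sigma~\!\!Q$ are $\ell c_{\omega}$-spaces with sequences $(\Sigma~\!\!J_n)$, $(\Sigma~\!\!K_n)$. By Proposition~\ref{lc-omega-space-two-product}, $\Sigma~\!\!P\times\Sigma~\!\!Q$ is an $\ell c_{\omega}$-space witnessed by $(\Sigma~\!\!J_n\times\Sigma~\!\!K_n)$, whose condition (d) reads: $W$ is open in $\Sigma~\!\!P\times\Sigma~\!\!Q$ iff $W\cap(\Sigma~\!\!J_n\times\Sigma~\!\!K_n)$ is open in $\Sigma~\!\!J_n\times\Sigma~\!\!K_n$ for every $n$. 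I would then translate both sides: on the left, Theorem~\ref{Scott-topology-on-product-of-two-lc-omega-poset} gives $\Sigma~\!\!(P\times Q)=\Sigma~\!\!P\times\Sigma~\!\!Q$, so ``open in $\Sigma~\!\!P\times\Sigma~\!\!Q$'' means ``Scott open in $P\times Q$''; on the right, Corollary~\ref{Scott-topology-product-cor} gives $\Sigma~\!\!J_n\times\Sigma~\!\!K_n=\Sigma~\!\!L_n$, so ``open in $\Sigma~\!\!J_n\times\Sigma~\!\!K_n$'' means ``Scott open in $L_n$''. This is exactly condition (d) for $P\times Q$. I expect the main obstacle to be precisely this bookkeeping: one must keep the Scott topology of the product, the product of the Scott topologies, and the Scott topologies of the pieces $L_n$ carefully distinguished, and invoke the exact hypotheses under which they coincide. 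Finally, in the dcpo case the same argument yields an $\ell f_{\omega}$-dcpo, which is a fortiori an $\ell c_{\omega}$-dcpo, matching the stated conclusion.
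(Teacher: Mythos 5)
Your proof follows the paper's route exactly: the paper likewise sets $H_n=J_n\times K_n$, observes that these form a sequence of $d$-sub-posets (your coordinatewise-supremum computation is the detail behind this), and settles conditions (a)--(d) by combining product stability of the relevant local property with Theorem~\ref{Scott-topology-product} (via Corollary~\ref{Scott-topology-product-cor}) and Theorem~\ref{Scott-topology-on-product-of-two-lc-omega-poset}; your translation of condition (d) through Proposition~\ref{lc-omega-space-two-product} is precisely the bookkeeping that the paper compresses into ``it is easy to verify''.

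The one substantive slip is your reading of $\ell f_{\omega}$: you take condition (a) to mean that each $\Sigma~\!\!K_n$ is locally hypercompact, calling this ``the locally hypercompact analogue of Definition~\ref{def-lc-omega-dcpo}'', but Definition~\ref{def-lf-omega-dcpo} actually demands local finiteness in the sense of Definition~\ref{def-LF-space}, i.e.\ neighborhoods of the form $x\in \ii\,\ua F=\ua F\subseteq U$ with $\ua F$ itself open, which is strictly stronger than local hypercompactness ($x\in\ii\,\ua F\subseteq\ua F\subseteq U$). As written, your sublemma for condition (a) therefore establishes less than the stated conclusion requires. The repair is immediate and is exactly the fact the paper invokes: if $\ua F$ and $\ua G$ are open, then $\ua F\times\ua G=\ua(F\times G)$ is open in the product, so the product of two locally finite spaces is locally finite; and since local finiteness still implies local compactness (each $\ua F$ is compact), every other step of your argument --- Corollary~\ref{Scott-topology-product-cor} giving $\Sigma~\!\!L_n=\Sigma~\!\!J_n\times\Sigma~\!\!K_n$, and the three-way identification of topologies in condition (d) --- goes through unchanged. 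You should also be aware that the statement under this label is typographically garbled: the proof the paper supplies here is really the $\ell c_{\omega}$ version (resting on the product of two locally compact spaces being locally compact), with the genuine $\ell f_{\omega}$ statement deferred to Proposition~\ref{lf-omega-poset-two-product}, whose proof is declared ``completely similar''. Your argument, being generic in the local property (any property implying local compactness and stable under binary products), covers both cases at once, which matches the paper's intent.
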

\begin{proof} As $P$ and $Q$ are two $\ell f_{\omega}$-poset (resp., $\ell _{\omega}$-dcpo), there exists a sequence $(J_n)_{n\in \mathbb{N}}$ of $d$-sub-posets (resp., sub-dcpos) of $P$ satisfying Conditions (a)-(d) of Definition \ref{def-lc-omega-dcpo} for $P$, and a sequence $(K_n)_{n\in \mathbb{N}}$ of $d$-sub-posets (resp., sub-dcpos) of $Q$ satisfying Conditions (a)-(d) of Definition \ref{def-lc-omega-dcpo} for $Q$. For each $n\in \mathbb{N}$, let $H_n=J_n\times K_n$. Then $(H_n)_{n\in \mathbb{N}}$ is a sequence of $d$-sub-posets (resp., sub-dcpos) of $P\times Q$. Since the product space of two locally compact spaces is locally compact, by Theorem \ref{Scott-topology-product} and Theorem \ref{Scott-topology-on-product-of-two-lc-omega-poset}, it is easy to verify that $(H_n)_{n\in \mathbb{N}}$ satisfies Conditions (a)-(d) of Definition \ref{def-lc-omega-dcpo} for $P\times Q$. Thus $P\times Q$ is an $\ell c_{\omega}$-poset (resp., $\ell c_{\omega}$-dcpo).
\end{proof}

 \begin{definition}\label{def-LF-space} A $T_0$ space $X$ is called  \emph{locally finite} if for each $x\in X$ and each open neighborhood $U$ of $x$, there is a finite subset $F$ of $X$ such that $x\in\ii\,\ua F=\ua F\subseteq U$ or, equivalently, there is an $F\in X^{(<\omega)}$ such that $\ua F\in \mathcal O(X)$ and $x\in\ua F\subseteq U$.
\end{definition}

Similar to the concept of $\ell c_{\omega}$-posets, we give the following.

\begin{definition}\label{def-lf-omega-dcpo}
A poset (resp., dcpo) $P$ is said to be an $\ell f_{\omega}$-\emph{poset} (resp., $\ell f_{\omega}$-\emph{dcpo}) if there is a sequence $(K_n)_{n\in \mathbb{N}}$ of $d$-sub-posets (resp., sub-dcpos) satisfying the following conditions:

\smallskip
$(a)$\, $\Sigma~\!\!K_{n}$ is locally finite;

\smallskip

$(b)$\, $K_{n}\subseteq K_{m}$ whenever $n\leq m$;

\smallskip
$(c)$\, $P=\bigcup_{n\in \mathbb{N}}K_{n}$;

\smallskip
$(d)$\, a subset $U$ of $P$ is Scott open if and only if $U\cap K_{n}$ is Scott open in $K_n$ for each $n$.
\end{definition}

Clearly, if $P$ is an $\ell f_{\omega}$-poset (resp., $\ell f_{\omega}$-dcpo), then $P$ is an $\ell c_{\omega}$-poset (resp., $\ell c_{\omega}$-dcpo). The following example shows that the reversion does not hold in general.

\begin{example}\label{lc-omega-dcpo-not-imply-ln-omega-dcpo} Let $W$ be the unital compact topological semilattice, constructed in \cite[VI-4]{GHKLMS-2003}, which has no basis of subsemilattices. It was proved in \cite{Xu-2016-1} (see also \cite{Erne-1985}) that $W$ is a meet continuous lattice and $\sigma(W)$ is a continuous lattice, but $W$ is not a continuous lattice. Hence $W$ is not quasicontinuous. Then by Theorem \ref{LC-sober=LC-wf=CC-sober} and Proposition \ref{Lawson-compact-Scott-wf-2}, $\Sigma~\!\!W$ is locally compact and hence $W$ is an $\ell c_{\omega}$-dcpo. By Lemma \ref{quasicontinuou-domain=Scott-topology-LHC}, $\Sigma~\!\!W$ is not locally hypercompact. It is not difficult to show that $W$ is not an $\ell f_{\omega}$-dcpo.
\end{example}

Similar to Proposition \ref{lc-omega-poset-two-product}, we have the following.

\begin{proposition}\label{lf-omega-poset-two-product}
If $P, Q$ are two $\ell f_{\omega}$-poset (resp., $\ell f_{\omega}$-dcpo), then the product $P\times Q$ is an $\ell f_{\omega}$-poset (resp., $\ell c_{\omega}$-dcpo).
\end{proposition}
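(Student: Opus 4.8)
The plan is to mirror the proof of Proposition~\ref{lc-omega-poset-two-product}, replacing ``locally compact'' by ``locally finite'' throughout, so that the only genuinely new ingredient is the stability of local finiteness under finite products. First I would fix sequences $(J_n)_{n\in\mathbb{N}}$ and $(K_n)_{n\in\mathbb{N}}$ of $d$-sub-posets (resp.\ sub-dcpos) witnessing that $P$ and $Q$ are $\ell f_{\omega}$-posets (resp.\ $\ell f_{\omega}$-dcpos) in the sense of Definition~\ref{def-lf-omega-dcpo}, and set $H_n=J_n\times K_n$ for each $n$. That each $H_n$ is a $d$-sub-poset (resp.\ sub-dcpo) of $P\times Q$ is immediate: the supremum of a directed subset of a product is computed coordinatewise, and the two coordinate projections of a directed set are again directed, so their suprema land back in $J_n$ and $K_n$ respectively. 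Conditions~(b) and~(c) of Definition~\ref{def-lf-omega-dcpo} for $(H_n)$ follow at once from the monotonicity of $(J_n)$ and $(K_n)$ and from the fact that, both chains being increasing, $\bigcup_n (J_n\times K_n)=\left(\bigcup_n J_n\right)\times\left(\bigcup_n K_n\right)=P\times Q$.

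The main step is condition~(a): each $\Sigma~\!\!H_n=\Sigma~\!\!(J_n\times K_n)$ must be locally finite. Since $\Sigma~\!\!J_n$ is locally finite it is in particular locally compact, and a locally compact space is core compact (Remark~\ref{core-compact-not-LC}); hence Theorem~\ref{Scott-topology-product} gives $\Sigma~\!\!(J_n\times K_n)=\Sigma~\!\!J_n\times\Sigma~\!\!K_n$, the product topology. It then suffices to establish the key lemma that the product $X\times Y$ of two locally finite $T_0$ spaces is locally finite. For this, given $(x,y)$ and an open neighbourhood $W$, I would first shrink $W$ to a basic product neighbourhood $U\times V$ with $x\in U$, $y\in V$; use local finiteness of $X$ and $Y$ to get finite sets $F\subseteq X$ and $G\subseteq Y$ with $x\in\ua F=\ii\,\ua F\subseteq U$ and $y\in\ua G=\ii\,\ua G\subseteq V$, where $\ua F$ and $\ua G$ are open; and then invoke the identity $\ua(F\times G)=\ua F\times\ua G$ (valid because in the product order the two coordinates can be bounded below independently). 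As $\ua F\times\ua G$ is open in $X\times Y$, the finite set $F\times G$ satisfies $(x,y)\in\ua(F\times G)=\ii\,\ua(F\times G)\subseteq U\times V\subseteq W$, which is precisely local finiteness at $(x,y)$.

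For condition~(d) I would use that an $\ell f_{\omega}$-poset is in particular an $\ell c_{\omega}$-poset, so Theorem~\ref{Scott-topology-on-product-of-two-lc-omega-poset} applies to $P$ and $Q$ and yields $\Sigma~\!\!(P\times Q)=\Sigma~\!\!P\times\Sigma~\!\!Q$. More precisely, the proof of that theorem identifies this common topology with $\tau=\{W\subseteq P\times Q:\ W\cap(J_n\times K_n)\text{ is open in }J_n\times K_n\text{ for every }n\}$, and since $\Sigma~\!\!J_n$ is core compact the phrase ``open in $J_n\times K_n$'' means Scott open in $H_n$. The identification $\sigma(P\times Q)=\tau$ is exactly the assertion that $W$ is Scott open in $P\times Q$ if and only if $W\cap H_n$ is Scott open in $H_n$ for every $n$, i.e.\ condition~(d). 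Having verified (a)--(d), I conclude that $(H_n)$ witnesses $P\times Q$ as an $\ell f_{\omega}$-poset in the poset case and as an $\ell f_{\omega}$-dcpo (a fortiori an $\ell c_{\omega}$-dcpo) in the dcpo case.

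I expect the only real obstacle to be the local-finiteness-times-local-finiteness lemma feeding condition~(a): the identity $\ua(F\times G)=\ua F\times\ua G$ together with the openness of a product of open upper sets makes it short, but it is the one place where the argument genuinely departs from the $\ell c_{\omega}$-case rather than merely citing it. Everything else is a transcription of Proposition~\ref{lc-omega-poset-two-product} and of the $\tau$-identification already carried out inside the proof of Theorem~\ref{Scott-topology-on-product-of-two-lc-omega-poset}.
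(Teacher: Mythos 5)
Your proposal is correct and follows essentially the same route as the paper, whose proof of Proposition~\ref{lf-omega-poset-two-product} consists precisely of the remark that the product of two locally finite spaces is locally finite together with the argument of Proposition~\ref{lc-omega-poset-two-product} (i.e.\ taking $H_n=J_n\times K_n$ and invoking Theorem~\ref{Scott-topology-product} and Theorem~\ref{Scott-topology-on-product-of-two-lc-omega-poset} for condition~(d)). You merely supply the details the paper leaves implicit --- the coordinatewise-sup verification that $H_n$ is a $d$-sub-poset, the short proof via $\ua(F\times G)=\ua F\times\ua G$ of the locally finite product lemma, and the identification of $\tau$-openness with Scott openness in $H_n$ --- and you also sensibly read the conclusion in the dcpo case as ``$\ell f_{\omega}$-dcpo'' (the statement's ``$\ell c_{\omega}$-dcpo'' is evidently a typo).
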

\begin{proof} Based on the fact that the product space of two locally finite spaces is locally finite, the proof is completely similar to that of Proposition \ref{lc-omega-poset-two-product}.
\end{proof}

For a poset $P$, $I\subseteq P$ is called an \emph{ideal} of $P$ if $I$ is a directed lower subset of $P$. Let $\mathrm{Id} (P)$ be the poset (with the order of set inclusion) of all ideals of $P$. The ideals $\da x$ are called \emph{principal ideals}. $I\in \mathrm{Id} (P)$ is called a \emph{non-trivial ideal} if $I$ is not a principal ideal, namely, $I\neq \downarrow x$ for all $x\in P$. Let $\mathrm{Id}_p(P)=\{\da x : x\in P\}$ and $\mathrm{Id}_{\neg p}(P)=\mathrm{Id}(P)\setminus \mathrm{Id}_p(P)$, namely, the family of all non-trivial ideals of $P$.

\begin{remark}\label{rem-no-principal-ideal=directed-set-has-largest-element} For a poset $P$, consider the following conditions:
\begin{enumerate}[\rm (1)]
\item There is no non-trivial ideal in $P$ (i.e., $\mathrm{Id}(P)=\{\da_{P} x : x\in P\}$).
\item Every directed subset of $P$ has a largest element.
\item Every element of $P$ is compact.
\item $\sigma (P)=\alpha (P)$.
\end{enumerate}
Then (1) $\Leftrightarrow$ (2) $\Rightarrow$ (3) $\Leftrightarrow$ (4), and the four conditions are equivalent if $P$ is a dcpo.
\end{remark}

Now we give one of the main results of this paper.

\begin{theorem}\label{c-poset-is-lf-omega-poset} Let $P$ be a poset (resp., a dcpo) satisfying the following conditions:
\begin{enumerate}[\rm (i)]
\item $\mathrm{Id}_{\neg p} (P)=\{I_n : n\in \mathbb{N}\}$ is countable;
\item for each $n\in \mathbb{N}$, there is a countably strict chain $c_{(n, 1)}<c_{(n, 2)}< ... < c_{(n, m)}<c_{(n, m+1)}<...$ such that $I_n=\da
\{c_{(n, m)} : m\in \mathbb{N}\}$;
\item there is a countable family $\{T_n : n\in \mathbb{N}\}$ of subsets of $P$ such that $P\setminus \bigcup_{n\in\mathbb{N}} H_n=\bigcup_{n\in \mathbb{N}}T_n$ and $\mathrm{Id}(T_l)=\{\da_{T_l} x : x\in T_l\}$ (that is, $\mathrm{Id}_{\neg p} (T_l)=\emptyset$) for all $l\in \mathbb{N}$, where $H_n=\{c_{(n, m)} : m\in \mathbb{N}\}\cup \{\bigvee\limits_{m\in \mathbb{N}}c_{(n, m)}\}$ if $\bigvee\limits_{m\in \mathbb{N}}c_{(n, m)}$ exists in $P$ and $H_n=\{c_{(n, m)} : m\in \mathbb{N}\}$ if $\bigvee\limits_{m\in \mathbb{N}}c_{(n, m)}$ does not exist in $P$.
\end{enumerate}
Then $P$ is an $\ell f_{\omega}$-poset (resp., an $\ell f_{\omega}$-dcpo).
\end{theorem}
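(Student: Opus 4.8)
The goal is to exhibit an explicit sequence $(G_k)_{k\in\mathbb N}$ of $d$-sub-posets (resp.\ sub-dcpos) of $P$ witnessing conditions (a)--(d) of Definition~\ref{def-lf-omega-dcpo}. The natural building blocks are already supplied by the hypotheses: the sets $H_n$ (each the strict chain $\{c_{(n,m)}:m\in\mathbb N\}$ together with its sup when it exists) and the sets $T_l$ (which carry \emph{no} non-trivial ideals). The plan is to assemble finite unions of these, namely to set $G_k=\bigcup_{n\le k}H_n\cup\bigcup_{l\le k}T_l$, and to verify the four conditions for this ascending sequence. Condition (b) is immediate from the nested definition, and condition (c) holds because $P=\big(P\setminus\bigcup_n H_n\big)\cup\bigcup_n H_n=\bigcup_l T_l\cup\bigcup_n H_n=\bigcup_k G_k$ by hypothesis~(iii).

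The substantive work is in conditions (a) and (d). For (a), I would argue that $\Sigma\,G_k$ is locally finite by combining two observations. Each $H_n$, being a chain with at most its sup adjoined, has the property that every directed subset has a largest element or cofinally approximates the adjoined top; by Remark~\ref{rem-no-principal-ideal=directed-set-has-largest-element} and the strictness of the chain, its Scott topology is governed by finitely generated upper sets, so $\Sigma\,H_n$ is locally finite. Each $T_l$ has $\mathrm{Id}_{\neg p}(T_l)=\emptyset$, so by Remark~\ref{rem-no-principal-ideal=directed-set-has-largest-element} every directed subset of $T_l$ has a largest element and $\sigma(T_l)=\alpha(T_l)$; in the Alexandroff topology $\ua F$ is open for every finite $F$, which forces local finiteness of $\Sigma\,T_l$. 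The remaining point is that a finite union of locally finite $d$-sub-posets is again locally finite in its induced Scott topology---here I expect to use that each $G_k$ is a finite union and that a basic Scott-open neighborhood in the union can be refined to a finitely-generated upper set by taking, for a point $x$, the union over the finitely many pieces containing a suitable approximant of $x$ of the finite generating sets supplied within each piece.

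For condition (d) the nontrivial direction is: if $U\cap G_k$ is Scott open in $G_k$ for every $k$, then $U$ is Scott open in $P$. I would show $U=\ua U$ and that $U$ meets every directed $D$ with $\bigvee D\in U$. Upper-set-ness transfers because every $x\in P$ lies in some $G_k$ and comparabilities are detected inside a common $G_k$. For directed sups: given $D\in\mathcal D(P)$ with $\bigvee D=:p\in U$, I would split into cases according to whether the ideal $\da D$ is principal or not. If $\da D=\da x$ is principal the sup is attained and $x\in U$ forces $D\cap U\ne\emptyset$ by upper-set-ness. If $\da D$ is non-trivial, then $\da D=\da I_n$ for some $n$ by hypothesis~(i), and by~(ii) the chain $\{c_{(n,m)}\}$ is cofinal in $D$; since this chain and its sup all lie in $H_n\subseteq G_n$, and $U\cap G_n$ is Scott open in $G_n$ with $\bigvee_m c_{(n,m)}=p\in U\cap G_n$, Scott-openness in $G_n$ yields some $c_{(n,m_0)}\in U$, whence by upper-set-ness $D\cap U\ne\emptyset$.

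\textbf{The main obstacle.} The delicate step is condition~(d) in the non-principal case, specifically the reduction from an arbitrary directed set $D$ with non-trivial downset to the \emph{specific} chain $\{c_{(n,m)}\}$ sitting inside a single $G_n$. This requires that $\da D=\da\{c_{(n,m)}:m\in\mathbb N\}$ as ideals of $P$ (so the two have the same sup and the chain is cofinal in $D$), which is exactly the content of hypotheses~(i) and~(ii)---the identification $\mathrm{Id}_{\neg p}(P)=\{I_n\}$ with each $I_n$ realized by a prescribed strict chain is what makes every non-attained directed sup factor through one of the chains $H_n$. Care is needed that the sup $\bigvee D$ in $P$ coincides with the sup computed inside $G_n$; this is secured by the $d$-sub-poset property of $H_n$ (Definition~\ref{def-sub-dcpo}) together with Lemma~\ref{count-directed-set-sup-count-chain-sup}, which guarantees the cofinal chain can be taken strictly ascending and sup-preserving. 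Once this reduction is in place, local finiteness of each piece and the finite-union argument for~(a) are routine.
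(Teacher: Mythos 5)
Your decomposition $G_k=\bigcup_{n\le k}H_n\cup\bigcup_{l\le k}T_l$ is exactly the sequence $K_n$ used in the paper, and your treatment of conditions (b), (c) and of the hard direction of (d) — reducing a directed $D$ with non-principal downset to the chain $C_n=\{c_{(n,m)}:m\in\mathbb{N}\}$ via $\da D=\da C_n$, applying Scott-openness of $U\cap G_n$ in $G_n$, and passing back to $D$ by upper-set-ness — matches the paper's Claims 3 and 5 (modulo the loose phrase that the chain ``is cofinal in $D$''; it need not be a subset of $D$, though what you actually use, namely that $c_{(n,m_0)}\in U$ has an upper bound in $D$, is correct). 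But two parts of the argument are genuinely missing, and they trace to the same omitted ingredient. You never verify that each $G_k$ is a $d$-sub-poset of $P$, which is part of Definition \ref{def-lf-omega-dcpo} and is also what licenses the easy direction of (d). This is not automatic: one must first show that every ideal of the finite union $\bigcup_{l\le k}T_l$ is principal (the hypothesis grants this only for each $T_l$ separately), and then that every directed $D\subseteq G_k$ without a largest element is cofinally contained in one of the finitely many chains $C_1,\dots,C_k$, whence $\vee_P D=\vee_P C_i\in H_i\subseteq G_k$. Both steps rest on Lemma \ref{cofinal-subset-of-directed-set} (a directed set covered by finitely many subsets has one of them cofinal), which your proposal never invokes.

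The main gap is condition (a). Your route relies on the principle that a finite union of locally finite $d$-sub-posets is locally finite in its Scott topology; this is not a theorem, and the construction you sketch fails as stated. If for $x\in U\in\sigma(G_k)$ you take $F$ to be the union of finite generating sets drawn only from the pieces containing (an approximant of) $x$, then $\ua_{G_k}F$ need not be Scott open in $G_k$: a chain $C_j$ meeting none of those pieces can have $\vee C_j\in\ua F$ (for instance $\vee C_j\ge x$) while $C_j\cap\ua F=\emptyset$, so the directed set $E=C_j$ witnesses the failure of openness. The paper's Claim 4 avoids this with a global choice: set $\mathbb{N}_U=\{j\le k : U\cap C_j\ne\emptyset\}$, for each $j\in\mathbb{N}_U$ let $m_j$ be minimal with $c_{(j,m_j)}\in U$, and take $F_U=\{x\}\cup\{c_{(j,m_j)} : j\in\mathbb{N}_U\}$. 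Then any directed $E\subseteq G_k$ with $\vee_{G_k}E\in\ua_{G_k}F_U$ either has a largest element or, by the cofinality reduction above, satisfies that some $E\cap C_l$ is cofinal in $E$; Scott-openness of $U$ in $G_k$ then yields $c_{(l,m)}\in E\cap C_l\cap U$, and minimality of $m_l$ forces $c_{(l,m_l)}\le c_{(l,m)}$, so $E$ meets $\ua_{G_k}F_U$. So the single missing idea in both places is the cofinality reduction supplied by Lemma \ref{cofinal-subset-of-directed-set}, together with the minimal-entry-point trick; without them your proof of (a), and the $d$-sub-poset requirement itself, do not go through.
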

\begin{proof} We only give a proof for the case that $P$ is a poset. When $P$ is a dcpo, the proof is clearly valid.

For each $n\in \mathbb{N}$, let $C_n=\{c_{(n, m)} : m\in \mathbb{N}\}$ and let $K_n=\bigcup\limits_{i=1}^{n}H_i\cup\bigcup\limits_{i=1}^{n} T_i$. Now we show that $(K_n)_{n\in \mathbb{N}}$ is a sequence of $d$-sub-posets of $P$ satisfying Conditions (a)-(d) of Definition \ref{def-lf-omega-dcpo}.

{\bf Claim 1:} For each $n\in \mathbb{N}$, let $S_n=\bigcup\limits_{i=1}^{n} T_i$. Then every ideal of $S_n$ is principal, that is, $\mathrm{Id} (S_n)=\{\da_{S_n} x : x\in S_n\}$.

Let $E\in \mathcal D(S_n)$. Then by Lemma \ref{cofinal-subset-of-directed-set} there is $1\leq j\leq n$ such that $E\cap T_j$ is a cofinal subset of $E$. Since every ideal of $T_j$ is principal by assumption (Condition (iii)), $E\cap T_j$ (as a directed subset of $T_j$) has a largest element $e$. Clearly,  $e$ is the largest element of $E$ and hence $\da_{S_n} E=\da_{S_n} e$. Thus $\mathrm{Id} (S_n)=\{\da_{S_n} x : x\in S_n\}$.

{\bf Claim 2:} For each $n\in \mathbb{N}$, $K_n$ is a $d$-sub-poset of $P$.

 For $D\in \mathcal D(K_n)$ with $\vee D$ existing in $P$, if $D$ has no largest element, then by Claim 1, $D\cap S_n$ is not a cofinal subset of $D$. Hence by Lemma \ref{cofinal-subset-of-directed-set}, there is $1\leq i\leq n$ such that $D\cap H_i$ is a cofinal subset of $D$ (equivalently, $D\cap C_i$ is a cofinal subset of $D$ since $D$ has no largest element) and hence $\vee (D\cap H_i)=\vee D$. As $D$ has no largest element, $D\cap H_i$ is a countable infinite sub-chain of $H_i$. It follows that $\vee H_i$ exists in $P$ (equivalently, $\vee C_i$ exists in $P$) and $\vee D=\vee (D\cap H_i)=\vee H_i=\vee C_i\in H_i\subseteq K_n$. Thus $K_n$ is a $d$-sub-poset of $P$.

{\bf Claim 3:} $K_{n}\subseteq K_{m}$ whenever $n\leq m$ and $P=\bigcup_{n\in \mathbb{N}}K_{n}$.

Clearly, $K_n\subseteq K_{n+1}$ for each $n\in \mathbb{N}$ and $\bigcup_{n\in\mathbb{N}}K_{n}=\bigcup\limits_{n\in \mathbb{N}}(\bigcup\limits_{i=1}^{n}H_i\cup\bigcup\limits_{i=1}^{n} T_i)=\bigcup\limits_{n\in \mathbb{N}}H_n\cup \bigcup\limits_{n\in \mathbb{N}}T_n=P$.

{\bf Claim 4:} For each $n\in \mathbb{N}$, $\Sigma~\!\!K_{n}$ is locally finite.

Let $x\in U\in \sigma(K_n)$. We denote by $\mathbb{N}_U$ the finite set $\{i\in \mathbb{N} : 1\leq i\leq n, U\cap C_i\neq\emptyset\}$. Then for each $j\in \mathbb{N}_U$, $c_{(j, k)}\in U$ for some $k\in \mathbb{N}$. Let $m_j=\mathrm{min}\{m\in\mathbb{N} : c_{(j, m)}\in U\}$ and let $F_U=\{x\}\cup\{c_{(j, m_j)} : j\in\mathbb{N}_U\}$. Then $F\in (K_n)^{(<\omega)}$ and $x\in \ua_{K_n}F\subseteq U$. Now we show that $\ua_{K_n}F$ is Scott open in $K_n$. Suppose that $E\in \mathcal D(K_n)$ with $\vee_{K_n} E$ existing and $\vee_{K_n} E\in \ua_{K_n}F$. If $E$ has a largest element $e$, then $e=\vee_{K_n} E\in E\cap \ua_{K_n}F$. Now we assume that $E$ has no largest element. Then carrying out a proof similar to that of Claim 2, we know that there is some $l\in\{1, 2, ..., n\}$ such that $E\cap C_l$ is a cofinal subset of $C_l$ and hence $\vee_{K_n} (E\cap C_l)=\vee_{K_n} E\in \ua_{K_n}F\subseteq U\in \sigma(K_n)$. It follows that there is $e^{\prime}=c_{(l, m)}\in E\cap C_l\cap U$. Then $e^{\prime}\in \ua_{K_n}c_{(l, m_l)}\subseteq \ua_{K_n}F$, completing the proof of the Scott-openness of $\ua_{K_n}F$ in $K_n$.

{\bf Claim 5:} For any $U\subseteq P$, $U\in \sigma(P)$ iff $U\cap K_{n}\in \sigma(K_n)$ for each $n$.

For each $n\in \mathbb{N}$, as $K_n$ is a $d$-sub-poset of $P$, we clearly have that $U\in \sigma(P)$ implies $U\cap K_{n}\in \sigma(K_n)$. Conversely, suppose that $U\cap K_{n}\in \sigma(K_n)$ for each $n\in \mathbb{N}$. Now we show that $U$ is Scott open in $P$. First, we verify that $U=\ua U$. Let $y\geq x\in U$. By $P=\bigcup_{n\in \mathbb{N}}K_{n}$, there is $m\in \mathbb{N}$ such that $x, y\in K_m$. Then $y\geq x\in U\cap K_m\in \sigma (K_m)$. Hence $y\in U\cap K_m\subseteq U$. For $D\in \mathcal D(P)$ with $\vee D$ existing and $\vee D\in U$, if $D$ has a largest element $d^{\ast}$, then $d^{\ast}=\vee D\in U$. If $D$ has no largest element, then $\da D\in \mathrm{Id}_{\neg p} (P)$, whence $\da D=\da C_n$ for some $n\in \mathbb{N}$. Since $K_n$ is a $d$-sub-poset of $P$, we have that $\vee_{K_n}C_n=\vee C_n=\vee D\in U\cap K_n\in\sigma(K_n)$ by assumption. It follows that $c_{(n, l)}\in U\cap K_n$ for some $l\in \mathbb{N}$. By $\da D=\da C_n$, there is $d_l\in D$ with $c_{(n, l)}\leq d_l$, and consequently, $d_l\in \ua U=U$. Thus $U\in \sigma (P)$.

By Claims 1-4, $P$ is an $\ell f_{\omega}$-poset.
\end{proof}

\begin{definition}\label{def-c-poset} A poset (resp., dcpo) $P$ is called a $c$-\emph{poset} (resp., a $c$-\emph{dcpo}) if $P$ satisfies Conditions (i)-(iii) of Theorem \ref{c-poset-is-lf-omega-poset}.
\end{definition}

By Theorem \ref{Scott-topology-on-product-of-two-lc-omega-poset}, Proposition \ref{lf-omega-poset-two-product} and Theorem \ref{c-poset-is-lf-omega-poset}, we get the following corollary.

\begin{corollary}\label{product-of-finite-c-poset-is-lc-omega-poset} Let $\{P_i : i=1, 2, 3, ..., n\}$ ($n\in\mathbb{N}$) be a finite family of $c$-posets (resp., $c$-dcpos). Then the product $\prod\limits_{i=1}^{n}P_i$ is an $\ell f_{\omega}$-poset (resp., an $\ell f_{\omega}$-dcpo) and $\Sigma~\!\!\prod\limits_{i=1}^{n}P_i=\prod\limits_{i=1}^{n}\Sigma~\!\!P_i$.
\end{corollary}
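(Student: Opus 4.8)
The plan is to reduce everything to the already-established two-factor results and then run a short induction on $n$. First I would observe that, by Definition \ref{def-c-poset}, each $c$-poset (resp. $c$-dcpo) $P_i$ satisfies Conditions (i)--(iii) of Theorem \ref{c-poset-is-lf-omega-poset}, so that theorem immediately gives that each $P_i$ is an $\ell f_{\omega}$-poset (resp. an $\ell f_{\omega}$-dcpo). This disposes of the $c$-poset hypotheses and turns the statement into one purely about finite products of $\ell f_{\omega}$-posets.

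Next I would prove by induction on $n$ that $\prod_{i=1}^{n}P_i$ is an $\ell f_{\omega}$-poset (resp. $\ell f_{\omega}$-dcpo). The case $n=1$ is immediate. For the inductive step I would identify $\prod_{i=1}^{n}P_i$ with $\left(\prod_{i=1}^{n-1}P_i\right)\times P_n$; by the induction hypothesis the first factor is an $\ell f_{\omega}$-poset (resp. $\ell f_{\omega}$-dcpo), and by the first paragraph so is $P_n$, whence Proposition \ref{lf-omega-poset-two-product} (whose proof rests on the fact that a finite product of locally finite spaces is locally finite) yields that $\prod_{i=1}^{n}P_i$ is again an $\ell f_{\omega}$-poset (resp. $\ell f_{\omega}$-dcpo).

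For the topological identity I would induct on $n$ in parallel, using that every $\ell f_{\omega}$-poset is an $\ell c_{\omega}$-poset, as noted immediately after Definition \ref{def-lf-omega-dcpo}. Thus both $\prod_{i=1}^{n-1}P_i$ and $P_n$ are $\ell c_{\omega}$-posets, so Theorem \ref{Scott-topology-on-product-of-two-lc-omega-poset} applies and gives
\[
\Sigma~\!\!\prod_{i=1}^{n}P_i=\Sigma~\!\!\left(\left(\prod_{i=1}^{n-1}P_i\right)\times P_n\right)=\Sigma~\!\!\left(\prod_{i=1}^{n-1}P_i\right)\times\Sigma~\!\!P_n.
\]
Combining this with the induction hypothesis $\Sigma~\!\!\prod_{i=1}^{n-1}P_i=\prod_{i=1}^{n-1}\Sigma~\!\!P_i$ and the associativity of finite topological products then yields $\Sigma~\!\!\prod_{i=1}^{n}P_i=\prod_{i=1}^{n}\Sigma~\!\!P_i$, completing the induction.

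The point requiring the most care --- and the one I would regard as the crux --- is that the induction for the topological identity must be fed by the induction for the structural property: before Theorem \ref{Scott-topology-on-product-of-two-lc-omega-poset} can be invoked at stage $n$, one must already know that the partial product $\prod_{i=1}^{n-1}P_i$ is an $\ell f_{\omega}$-poset (hence an $\ell c_{\omega}$-poset). This is exactly what Proposition \ref{lf-omega-poset-two-product} supplies, so the two inductions can be carried out simultaneously. The remaining ingredients --- re-bracketing the finite product as $\left(\prod_{i=1}^{n-1}P_i\right)\times P_n$ and the standard associativity of product topologies --- are routine and pose no real obstacle.
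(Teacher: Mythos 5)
Your proof is correct and is essentially the paper's argument: the paper derives this corollary directly from Theorem \ref{c-poset-is-lf-omega-poset}, Proposition \ref{lf-omega-poset-two-product} and Theorem \ref{Scott-topology-on-product-of-two-lc-omega-poset} without spelling out the details, and your parallel induction on $n$ (using that every $\ell f_{\omega}$-poset is an $\ell c_{\omega}$-poset before invoking Theorem \ref{Scott-topology-on-product-of-two-lc-omega-poset}) is exactly the intended fleshing-out of that citation.
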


\begin{proposition}\label{poset-P-Id-P-countable-imply-P-is-c-poset} Let $P$ be a poset (resp., a dcpo) for which $\mathrm{Id} (P)$ is countable. Then $P$ is a $c$-poset (resp., a $c$-dcpo) and hence an $\ell f_{\omega}$-poset (resp., an $\ell f_{\omega}$-dcpo).
\end{proposition}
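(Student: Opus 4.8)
The plan is to verify directly that $P$ satisfies Conditions (i)--(iii) of Theorem~\ref{c-poset-is-lf-omega-poset}; this immediately makes $P$ a $c$-poset by Definition~\ref{def-c-poset}, and the $\ell f_\omega$-property then follows from Theorem~\ref{c-poset-is-lf-omega-poset} itself. The whole argument rests on one preliminary observation that I would establish first: the countability of $\mathrm{Id}(P)$ forces $P$ itself to be countable. Indeed, the assignment $x \mapsto \da x$ maps $P$ into $\mathrm{Id}(P)$, and it is injective, since $\da x = \da y$ gives $x \le y$ and $y \le x$, whence $x = y$ by antisymmetry. Thus $|P| \le |\mathrm{Id}(P)| \le \omega$. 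Once $P$ is known to be countable, every subset of $P$, and in particular every ideal, is countable, which is precisely what unlocks the remaining conditions.

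For Condition (i), I would simply observe that $\mathrm{Id}_{\neg p}(P) \subseteq \mathrm{Id}(P)$ is countable, so it may be enumerated as $\{I_n : n \in \mathbb{N}\}$. For Condition (ii), fix $n$ and note that $I_n$, being a non-trivial ideal, is a directed lower set with no largest element: if it had a largest element $m$ then, being a directed lower set, it would equal $\da m$ and hence be principal, a contradiction. Since $I_n$ is moreover countable, I would apply Lemma~\ref{count-directed-set-sup-count-chain-sup} to obtain a strictly ascending chain $c_{(n,1)} < c_{(n,2)} < \cdots$ contained in $I_n$ with $\da I_n = \da\{c_{(n,m)} : m \in \mathbb{N}\}$; as $I_n$ is a lower set this is exactly $I_n = \da\{c_{(n,m)} : m \in \mathbb{N}\}$, giving Condition (ii).

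Finally, for Condition (iii), the set $P \setminus \bigcup_{n} H_n$ is a subset of the countable set $P$ and is therefore countable, so I would enumerate it as $\{t_n : n \in \mathbb{N}\}$ and take each $T_n = \{t_n\}$ to be a singleton. A one-point poset has a single ideal, namely itself, which is principal, so $\mathrm{Id}(T_l) = \{\da_{T_l} x : x \in T_l\}$ holds trivially, while $\bigcup_n T_n = P \setminus \bigcup_n H_n$ by construction. If this set happens to be finite or empty, the same choice with finitely many, or with all, $T_n$ empty still works, reading $\mathrm{Id}(\emptyset) = \emptyset$. This verifies Conditions (i)--(iii), so $P$ is a $c$-poset; the $c$-dcpo case is word-for-word identical, as the argument never invokes directed completeness.

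I do not expect a genuine obstacle in this proof: the single point carrying all the weight is the passage from countability of $\mathrm{Id}(P)$ to countability of $P$ via the injection $x \mapsto \da x$. After that, Condition (i) is immediate, Condition (ii) is delegated wholesale to Lemma~\ref{count-directed-set-sup-count-chain-sup}, and Condition (iii) is settled by the degenerate choice of singleton $T_n$'s, whose ideals are principal for trivial reasons. The only points demanding a little care are the standing verification that a non-trivial ideal has no largest element (so that the strictly ascending branch of Lemma~\ref{count-directed-set-sup-count-chain-sup} applies) and the boundary bookkeeping when $P \setminus \bigcup_n H_n$ is finite or empty, neither of which presents any real difficulty.
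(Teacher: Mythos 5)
Your proof is correct and takes essentially the same route as the paper's: countability of $P$ via the injection $x \mapsto \da x$, an application of Lemma~\ref{count-directed-set-sup-count-chain-sup} to each non-trivial ideal, and a covering of $P \setminus \bigcup_{n} H_n$ by singletons (an option the paper itself notes with its ``or $T_n=\{y_n\}$'' remark). If anything, you are slightly more careful than the paper, spelling out why a non-trivial ideal has no largest element (so the strictly ascending branch of the lemma applies) and handling the degenerate case where $P \setminus \bigcup_n H_n$ is finite or empty, which the paper treats only via its separate finite-$P$ case.
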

\begin{proof} Since $|\mathrm{Id} (P)|\leq \omega$ and $\{\downarrow x : x\in P\}\subseteq \mathrm{Id} (P)$, we have that $\mathrm{Id}_{\neg p} (P)$ is countable and $|P|=|\{\downarrow x : x\in P\}|\leq |\mathrm{Id} (P)|\leq \omega$ (i.e.,  $P$ is countable). If $P$ is finite (equivalently, $\mathrm{Id} (P)$ is finite), then $x\ll x$ for all $x\in P$, and consequently, $\sigma (P)=\{\ua A : A\subseteq P\}\cup\{\emptyset\}$. Hence $P$ is a $c$-poset. Now suppose that $P$ is countable infinite. Then $\mathrm{Id} (P)$ is also countable infinite. Let $\mathrm{Id}_{\neg p} (P)=\{I_n : n\in \mathbb{N}\}$. For each $n\in \mathbb{N}$, by Lemma \ref{count-directed-set-sup-count-chain-sup}, there exists a countable strictly chain $c_{(n,1)}< c_{(n, 2)}< c_{(n, 3)}< ... < c_{(n, m)}< c_{(n, m+1)}<...$ in $I_n$ such that $I=\da \{c_{(n, m)} : m\in \mathbb{N}\}$. For each $n\in \mathbb{N}$, let $C_n=\{c_{(n, m)} : m\in \mathbb{N}\}$ and let

$$H_n=
	\begin{cases}
	C_n\cup \{\vee C_n\}, & \mathrm{~if~} \vee C_n \mathrm{~exists~in~} P \\
	C_n,& \mathrm{~if~} \vee C_n \mathrm{~does~not~exist~in~} P .
	\end{cases}$$

\noindent Since $P$ is countable, the subset $P\setminus \bigcup_{n\in \mathbb{N}}H_n$ is countable. Let $P\setminus \bigcup_{n\in \mathbb{N}}H_n=\{y_m : m\in \mathbb{N}\}$ and let $T_n=\{y_j : j=1, 2, ..., n\}$ (or $T_n=\{y_n\})$ for each $n\in \mathbb{N}$. Clearly, Conditions (i)-(iii) of Theorem \ref{c-poset-is-lf-omega-poset} are satisfied. Hence $P$ is a $c$-poset and hence an $\ell f_{\omega}$-poset by Theorem \ref{c-poset-is-lf-omega-poset}.
\end{proof}

\begin{corollary}\label{c-poset-is-lf-omega-poset-cor} Let $\{P_i : i=1, 2, 3, ..., n\}$ ($n\in\mathbb{N}$) be a finite family of posets (resp., dcpos) for which all $\mathrm{Id} (P_i)$ are countable. Then $\mathrm{Id} (\prod\limits_{i=1}^{n}P_i)$ is countable. So $\prod\limits_{i=1}^{n}P_i$ is a $c$-poset (resp., a $c$-dcpo) and hence an $\ell f_{\omega}$-poset (resp., an $\ell f_{\omega}$-dcpo).
\end{corollary}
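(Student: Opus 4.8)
The plan is to prove the numerical claim first, namely that $\mathrm{Id}(\prod_{i=1}^n P_i)$ is countable, and then to feed this directly into Proposition \ref{poset-P-Id-P-countable-imply-P-is-c-poset}, which converts countability of the ideal poset into the conclusion that $\prod_{i=1}^n P_i$ is a $c$-poset and hence an $\ell f_{\omega}$-poset. Since a finite product of dcpos is again a dcpo (with coordinatewise directed sups), the same argument handles the dcpo case verbatim, so the ``resp.'' statements need no separate treatment. Thus the whole weight of the proof rests on controlling the ideals of a finite product.

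To bound $|\mathrm{Id}(\prod_{i=1}^n P_i)|$, I would exhibit an injection $\Phi:\mathrm{Id}(\prod_{i=1}^n P_i)\to \prod_{i=1}^n \mathrm{Id}(P_i)$ given by $\Phi(I)=(\downarrow_{P_1}\pi_1(I),\dots,\downarrow_{P_n}\pi_n(I))$, where $\pi_i$ denotes the $i$-th coordinate projection. First I would check each coordinate lands where claimed: since $I$ is directed and each $\pi_i$ is order-preserving, $\pi_i(I)$ is directed, and the down-closure of a directed set is a directed lower set, so $\downarrow_{P_i}\pi_i(I)\in\mathrm{Id}(P_i)$.

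The heart of the argument, and the step I expect to be the main obstacle, is the decomposition identity
\[
I=\prod_{i=1}^n \downarrow_{P_i}\pi_i(I),
\]
which shows $\Phi$ is injective (indeed a bijection), since $I$ is then recoverable from $\Phi(I)$. The inclusion $\subseteq$ is immediate from the definitions. For $\supseteq$ one cannot argue coordinatewise, because a general lower set of a product need not be the product of its coordinate lower sets; here the directedness of $I$ is essential. Given $x=(x_1,\dots,x_n)$ with each $x_i\leq a_i$ for some $a_i\in\pi_i(I)$, I would choose witnesses $p^{(i)}\in I$ with $\pi_i(p^{(i)})=a_i$; directedness of $I$ supplies $q\in I$ with $q\geq p^{(i)}$ for all $i$, whence $\pi_i(q)\geq a_i\geq x_i$ for every $i$, i.e.\ $q\geq x$, and since $I$ is a lower set, $x\in I$.

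With $\Phi$ injective, countability follows at once: a finite product of countable sets is countable, so $|\mathrm{Id}(\prod_{i=1}^n P_i)|\leq \prod_{i=1}^n|\mathrm{Id}(P_i)|\leq\omega$. Applying Proposition \ref{poset-P-Id-P-countable-imply-P-is-c-poset} to $\prod_{i=1}^n P_i$ then yields that it is a $c$-poset (resp.\ $c$-dcpo) and hence an $\ell f_{\omega}$-poset (resp.\ $\ell f_{\omega}$-dcpo), completing the proof. Alternatively, once $\prod_{i=1}^n P_i$ is known to be a $c$-poset, one could invoke Corollary \ref{product-of-finite-c-poset-is-lc-omega-poset}, but the route through Proposition \ref{poset-P-Id-P-countable-imply-P-is-c-poset} is the most direct.
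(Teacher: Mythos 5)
Your proof is correct and takes essentially the same route as the paper: the paper's own proof simply asserts that $\mathrm{Id}(\prod_{i=1}^{n}P_i)=\prod_{i=1}^{n}\mathrm{Id}(P_i)$ is ``easy to verify'' and then applies Proposition \ref{poset-P-Id-P-countable-imply-P-is-c-poset}, exactly as you do. Your explicit argument for the decomposition $I=\prod_{i=1}^{n}\downarrow_{P_i}\pi_i(I)$ via directedness of $I$ is just a correct spelling-out of that omitted verification.
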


\begin{proof} It is easy to verify that $\mathrm{Id} (\prod\limits_{i=1}^{n}P_i)=\prod\limits_{i=1}^{n}\mathrm{Id} (P_i)$. Hence $\mathrm{Id} (\prod\limits_{i=1}^{n}P_i)$ is countable. By Proposition \ref{poset-P-Id-P-countable-imply-P-is-c-poset}, $\prod\limits_{i=1}^{n} P_i$ is a $c$-poset (resp., a $c$-dcpo) and hence an $\ell f_{\omega}$-poset (resp., an $\ell f_{\omega}$-dcpo).
\end{proof}

As an immediate corollary of Corollary \ref{product-of-finite-c-poset-is-lc-omega-poset} and Proposition \ref{poset-P-Id-P-countable-imply-P-is-c-poset},  we get the following.

\begin{corollary}\label{Scott-topology-on-product-of-two-special-countable-dcpos} (\cite{Miao-Xi-Li-Zhao-2022})
Let $\{P_i :  i=1, 2, 3, ..., n\}$ ($n\in\mathbb{N}$) be a finite family of posets. If all $\mathrm{Id} (P_i)$ are countable, then $\Sigma~\!\!\prod\limits_{i=1}^{n}P_i=\prod\limits_{i=1}^{n}\Sigma~\!\!P_i$.
\end{corollary}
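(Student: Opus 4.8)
The plan is to obtain this statement as an immediate consequence of two results already established, with no new argument required. First I would apply Proposition~\ref{poset-P-Id-P-countable-imply-P-is-c-poset} to each factor separately: the hypothesis that $\mathrm{Id}(P_i)$ is countable is precisely the hypothesis of that proposition, so it yields that every $P_i$ is a $c$-poset (in fact an $\ell f_{\omega}$-poset, though only the weaker conclusion is needed here).

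Next I would feed the finite family $\{P_i : i=1,2,\dots,n\}$ of $c$-posets directly into Corollary~\ref{product-of-finite-c-poset-is-lc-omega-poset}, whose conclusion is exactly that $\Sigma\,\prod_{i=1}^{n}P_i=\prod_{i=1}^{n}\Sigma~\!\!P_i$. This closes the argument. Since the corollary is already stated for an arbitrary finite index set, no separate induction on $n$ is needed at this stage.

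There is essentially no obstacle to surmount: all the genuine content has been packaged into the two invoked results. For orientation, I would note where that content resides. Corollary~\ref{product-of-finite-c-poset-is-lc-omega-poset} rests on Theorem~\ref{c-poset-is-lf-omega-poset} (every $c$-poset is an $\ell f_{\omega}$-poset), on Proposition~\ref{lf-omega-poset-two-product} (a finite product of $\ell f_{\omega}$-posets is again an $\ell f_{\omega}$-poset, hence an $\ell c_{\omega}$-poset), and on Theorem~\ref{Scott-topology-on-product-of-two-lc-omega-poset} (for two $\ell c_{\omega}$-posets the Scott topology of the product coincides with the product of the Scott topologies), together with an easy induction to pass from two factors to $n$. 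Thus the only thing the present proof must verify is that the countability hypothesis triggers Proposition~\ref{poset-P-Id-P-countable-imply-P-is-c-poset}, after which Corollary~\ref{product-of-finite-c-poset-is-lc-omega-poset} delivers the desired equality and recovers the result of \cite{Miao-Xi-Li-Zhao-2022}.
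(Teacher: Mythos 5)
Your proposal is correct and coincides with the paper's own derivation: the paper obtains this statement as an immediate consequence of Proposition~\ref{poset-P-Id-P-countable-imply-P-is-c-poset} (each $P_i$ with $\mathrm{Id}(P_i)$ countable is a $c$-poset) and Corollary~\ref{product-of-finite-c-poset-is-lc-omega-poset}, exactly as you do. Your accounting of where the real content lives (Theorem~\ref{c-poset-is-lf-omega-poset}, Proposition~\ref{lf-omega-poset-two-product} and Theorem~\ref{Scott-topology-on-product-of-two-lc-omega-poset}) is also accurate.
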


\begin{example}\label{examp-Scott-topology-on-product-of-two-Johnstone-dcpos}  Let $\mathbb{J}=\mathbb{N}\times (\mathbb{N}\cup \{\infty\})$ with ordering defined by $(j, k)\leq (m, n)$ if{}f $j = m$ and $k \leq n$, or $n =\infty$ and $k\leq m$ (see (see Figure 1). $\mathbb{J}$ is a well-known dcpo constructed by Johnstone in \cite{Johnstone-1981}. Clearly, $\mathbb{J}_{max}=\{(n, \infty) : n\in \mathbb{N}\}$ is the set of all maximal elements of $\mathbb{J}$, and for each $n\in \mathbb{N}$, $(n, 1)<(n, 2)< ... < (n, m) < (n, m+1) < ...$ is a countably strict chain with $(n, \infty)=\bigvee\limits_{m\in \mathbb{N}}(n, m)$.
\begin{figure}[ht]
	\centering
	\includegraphics[height=4.5cm,width=4.5cm]{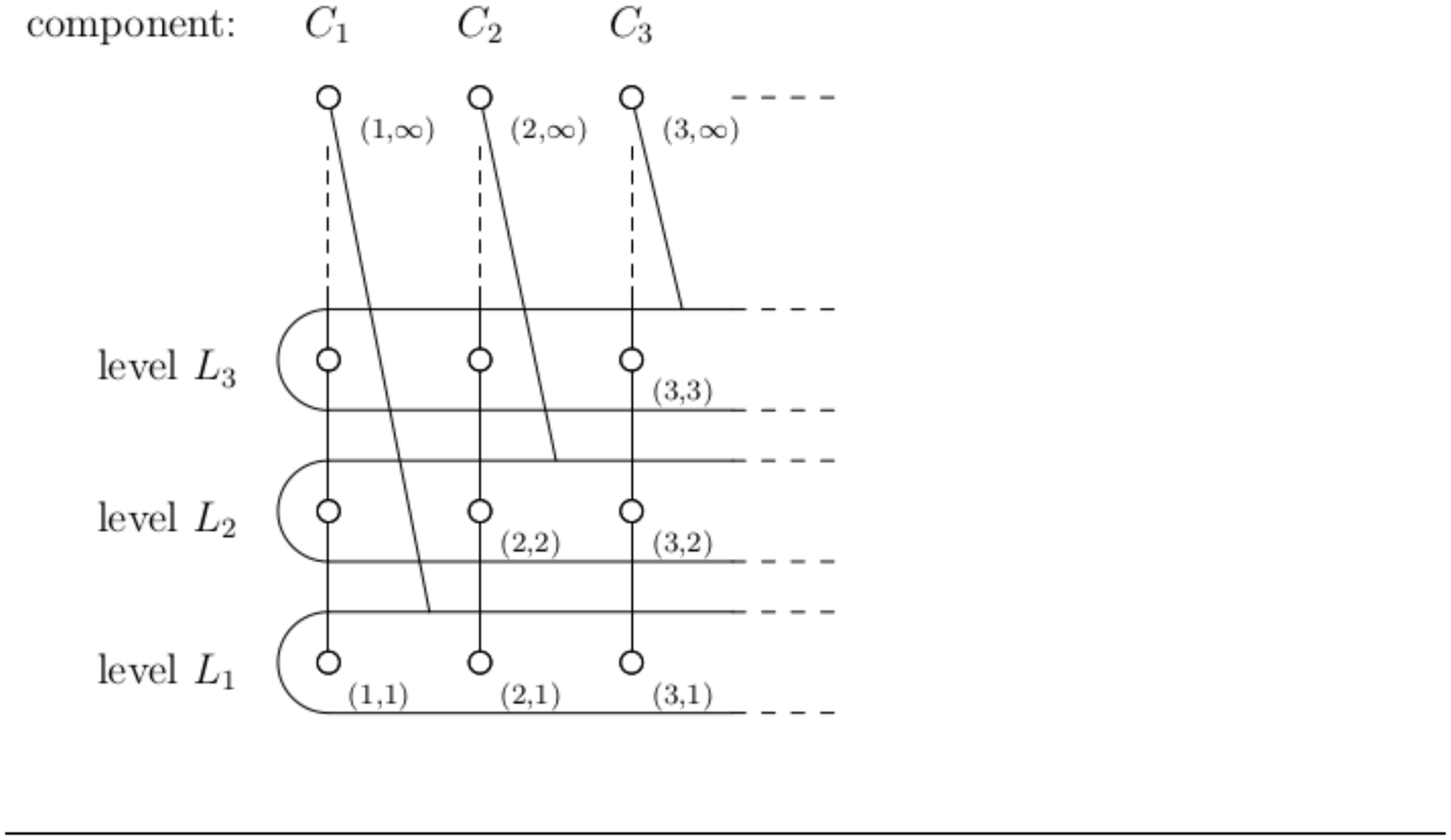}
	\caption{Johnstone's dcpo $\mathbb{J}$}
\end{figure}

The following three conclusions about $\Sigma~\!\mathbb{J}$ are known (see, for example, \cite[Example 3.1]{Lu-Li-2017} and \cite[Lemma 3.1]{Miao-Li-Zhao-2021}):
\begin{enumerate}[\rm (i)]
\item $\ir_c (\Sigma~\!\mathbb{J})=\{\overline{\{x\}}=\da_{\mathbb{J}} x : x\in \mathbb{J}\}\cup \{\mathbb{J}\}$.
\item $\mathsf{K}(\Sigma~\!\mathbb{J})=(2^{\mathbb{J}_{max}} \setminus \{\emptyset\})\bigcup \mathbf{Fin}~\!\mathbb{J}$.
\item $\Sigma~\!\mathbb{J}$ is coherent by (ii).
\item $\Sigma~\!\mathbb{J}$ is not well-filtered and hence non-sober (cf. \cite[Exercise 8.3.9]{Goubault-2013}).

Let $\mathcal G=\{\mathbb{J}\setminus F : F\in (\mathbb{J}_{max})^{(<\omega)}\}$. Then by (ii), $\mathcal G\subseteq \mathsf{K}(\Sigma~\!\mathbb{J}_\top)$ is a filtered family and $\bigcap\mathcal{G}=\bigcap_{F\in (\mathbb{J}_{max})^{(<\omega)}} (\mathbb{J}\setminus F)=\mathbb{J}_{max}\setminus \bigcup (\mathbb{J}_{max})^{(<\omega)}=\emptyset$, but $\mathbb{J}\setminus F=\emptyset$ for no $F\in (\mathbb{J}_{max})^{(<\omega)}$. Hence $\Sigma~\!\mathbb{J}$ is not well-filtered.
\end{enumerate}

Let $X$ be any set ($X$ can be the empty set, or a countable set, or a uncountably infinite set). $X$ can be considered as a poset equipped with the discrete order. Let $P=\mathbb{J}\cup X$ with ordering defined by
$$x\leq y \mbox{ if and only if }
\begin{cases}
	x\leq y\mbox{ in } \mathbb{J}, \mathrm{if}~ x,y\in \mathbb{J}, \mathrm{or}\\
	x=y, x, y\in X.
	\end{cases}$$

\noindent Clearly, when $|X|>\omega$, $\mathrm{Id}(P)$ is not countable. Indeed, $|\mathrm{Id}(P)|=|X|>\omega$. Now we show that $P$ is a $c$-dcpo.
For each $n\in \mathbb{N}$, let $C_n=\{n\}\times \mathbb{N}$ and $H_n=C_n\cup \{\vee C_n=(n, \infty)\}$. It is straightforward to verify that $\mathrm{Id}_{\neg p}(P)=\{\da C_n: n\in\mathbb{N}\}$. Clearly, $P\setminus \bigcup_{n\in\mathbb{N}} H_n=X$ and $\mathrm{Id}(X)=\{\da_{X} x=\{x\} : x\in X\}$. Therefore, Conditions (i)-(iii) of Theorem \ref{c-poset-is-lf-omega-poset} are satisfied and hence $P$ is a $c$-dcpo. By Corollary \ref{product-of-finite-c-poset-is-lc-omega-poset}, $\Sigma~\!\!P\times P=\Sigma~\!\!P\times \Sigma~\!\!P$. In particular, when $X=\emptyset$ we have that $\mathrm{Id} (\mathbb{J})=\{\downarrow x : x\in \mathbb{J}\}\cup \{\downarrow C_n : n\in \mathbb{N}\}$ is countable and hence $\Sigma~\!\mathbb{J}\times \mathbb{J}=\Sigma~\!\mathbb{J}\times \Sigma~\!\mathbb{J}$ by Corollary \ref{Scott-topology-on-product-of-two-special-countable-dcpos}.
\end{example}

The following example shows that there is a countable complete lattice $L$ such that $\Sigma~\!\! L$ is not first-countable but $\Sigma~\!\!\sigma(L)$ is second-countable (cf. \cite{Chen-Kou-Lyu-2022, Hertling-2022, Xu-Shen-Xi-Zhao-2020-1}).
	
\begin{example}\label{exam-a-countable-complete-lattice-non-first-countable}
	Let $L=\{\bot\}\cup(\mn \times \mn)\cup\{\top\}$ and define a partial order $\leq$ on $L$ as follows (see Figure 2):
	\begin{itemize}
		\item [(i)] $\forall (n,m)\in \mn\times\mn$, $\bot\leq (n,m) \leq\top$;
		\item [(ii)] $\forall (n_1,m_1), (n_2,m_2)\in\mn\times\mn$, $(n_1,m_1)\leq(n_2,m_2)$ iff $n_1=n_2$ and $m_1\leq m_2$.
	\end{itemize}
\begin{figure}[ht]
	\centering
	\includegraphics[height=4.5cm,width=4.5cm]{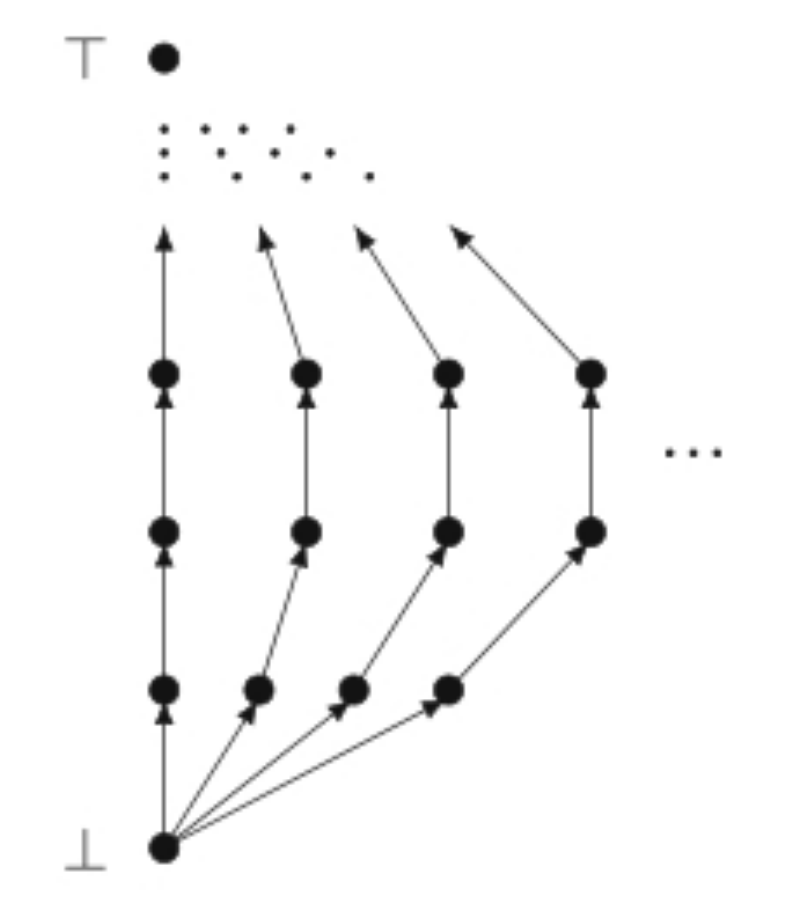}
	\caption{Complete lattice $L$ in Example 4.28.}
\end{figure}

\begin{enumerate}[\rm (1)]
\item It was shown in \cite[Example 4.8.]{Xu-Shen-Xi-Zhao-2020-1} that $\Sigma~\!\!L$ is not first-countable.
\item It is easy to see that $\mathrm{Id}(L)$ is countable. Hence by Proposition \ref{poset-P-Id-P-countable-imply-P-is-c-poset} and Corollary \ref{Scott-topology-on-product-of-two-special-countable-dcpos}, $L$ is a $c$-dcpo and an $\ell f_{\omega}$-dcpo, and $\Sigma~\!\! (L\times L)=\Sigma~\!\!L\times \Sigma~\!\!L$ (see also \cite[Lemma 4.12]{Hertling-2022}). So by Corollary \ref{Scott-topology-on-product-is-Scott-topology-product-imply-sober} $\Sigma~\!\!L$ is sober.
\item Hertling proved in \cite[Lemma 4.13 and Corollary 4.16]{Hertling-2022} that $\Sigma ~\!\!\sigma (L)$ is second-countable (i.e., $\sigma (L)$ has a countable base), whence by Proposition \ref{de-Brecht}, $\Sigma~\!\! (\sigma (L)\times \sigma (L))=\Sigma~\!\!\sigma (L)\times \Sigma~\!\!\sigma (L)$.
\item One can easily check that $\Sigma ~\!\!L$ is not core compact (indeed, for $U, V\in \sigma (L)$, $U\ll_{\sigma (L)}V$ iff $U=\emptyset$ or $U=V=L$). Therefore, by Theorem \ref{Scott-topology-product}, $\Sigma~\!\!(L\times \sigma (L))\neq\Sigma~\!\!L\times \Sigma~\!\!\sigma (L)$ (see also \cite[Proposition 4.6]{Hertling-2022}).

\end{enumerate}
\end{example}

\begin{example}\label{exam-Scott-topology-on-product-of-two-Jia-dcpos}

Let $\mathcal{L}=\mathbb{N}\times \mathbb{N}\times (\mathbb{N}\cup \{\infty\})$, where $\mathbb{N}$ is the set of natural numbers with the usual order. Define an order $\leq$ on $\mathcal L$ as follows:

$(i_1, j_1, k_1)\leq (i_2, j_2, k_2)$ if and only if:

(1) $i_1=i_2, j_1=j_2, k_1\leq k_2\leq \infty$; or

(2) $i_2=i_1+1, k_1\leq j_2, k_2=\infty$.

 $\mathcal L$ is a known dcpo constructed by Jia in \cite[Example 2.6.1]{Jia-2018}. It can be easily depicted as in Figure 3 taken from \cite{Jia-2018}.

\begin{figure}[ht]
	\centering
	\includegraphics[height=5cm,width=12cm]{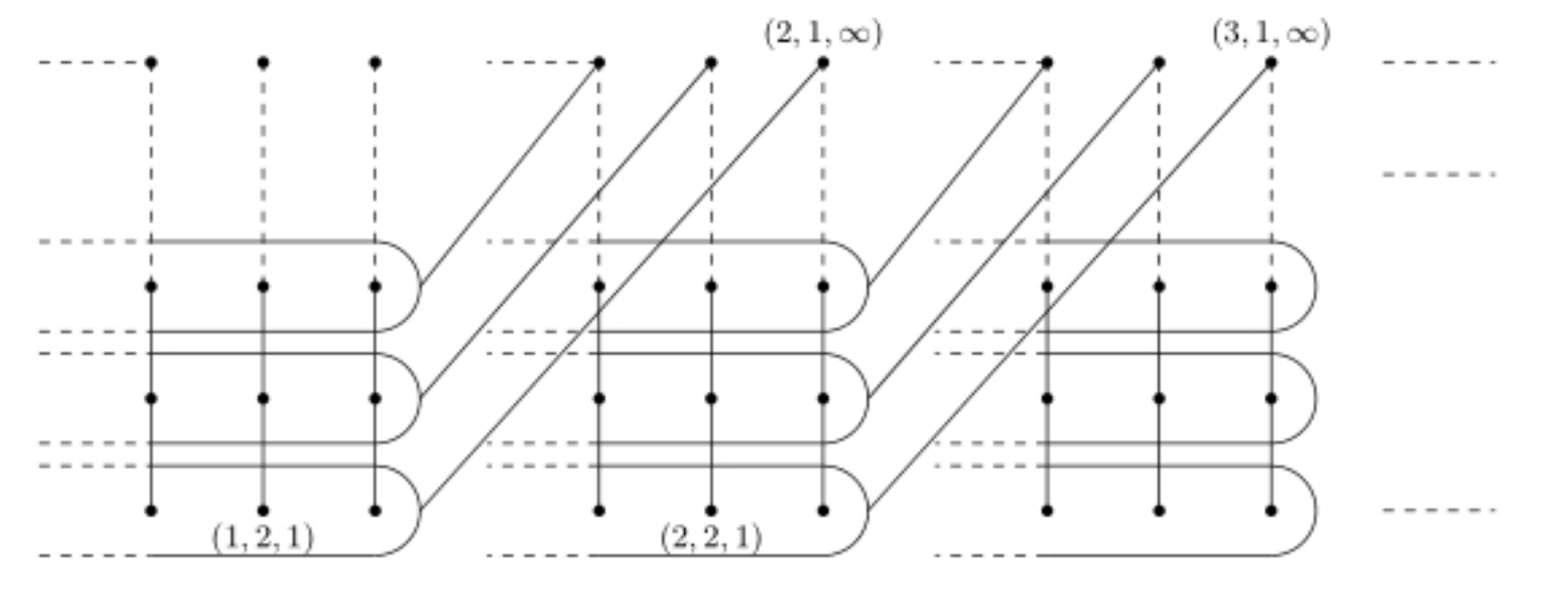}
	\caption{Jias dcpo $\mathcal L$}
\end{figure}

Let $\mathcal L^{\infty}=\{(i, j, \infty) : (i, j)\in \mathbb{N}\times\mathbb{N}\}$ (the set of all maximal elements of $\mathcal L$).

\begin{enumerate}[\rm (1)]
\item It was shown in \cite[Example 2.6.1]{Jia-2018} that $\Sigma~\!\!\mathcal{L}$ is well-filtered but non-sober.

\item  $\Sigma~\!\!L$ is not coherent.

By \cite[Claim 2.6.3]{Jia-2018}, $\ua (1, 1, 1)\cap \ua (1, 2, 1)=\{(2, m+1, \infty) : m\in \mathbb{N}\}$ is not Scott compact. Thus $\Sigma~\!\!\mathcal{L}$ is not coherent.

\item Suppose that $D$ is an infinite directed subset of $\mathcal L$. Then there is a unique $(i_D, j_D, \infty)\in \mathcal L^{\infty}$ such that $(i_D, j_D, \infty)$ is a largest element of $D$ or the following three conditions are satisfied:
\begin{enumerate}[\rm (i)]
\item $(i_D, j_D, \infty)\not\in D$,
\item $D\subseteq \{(i_D, j_D, l) : l\in \mathbb{N}\}$, and
\item  $(i_D, j_D, \infty)=\bigvee_{\mathcal L} D$.
\end{enumerate}
{\bf Proof.} If there is $(i_0, j_0, \infty)\in D\cap \mathcal L^{\infty}$, then for each $d=(i_d, j_d, l_d)\in D$, there is $d^*=(i_{d^*}, j_{d^*}, l_{d^*})\in D$ such that $(i_0, j_0, \infty)\leq d^*=(i_{d^*}, j_{d^*}, l_{d^*})$ and $d=(i_d, j_d, l_d)\leq d^*=(i_{d^*}, j_{d^*}, l_{d^*})$, whence $l_{d^*}=\infty, i_{d^*}=i_0, j_{d^*}=j_0$ (i.e., $d^*=(i_0, j_0, \infty)$) and $d\leq d^*=(i_0, j_0, \infty)$. Hence $(i_D, j_D, \infty)=(i_0, j_0, \infty)$ is the (unique) largest element of $D$.

Now suppose that $D\cap \mathcal L^{\infty}=\emptyset$, that is, $D\subseteq \mathbb{N}\times \mathbb{N}\times \mathbb{N}$. Select a $d_1=(i_{d_1}, j_{d_1}, l_{d_1})\in D$. Then for each $d=(i_d, j_d, l_d)\in D$, by the directedness of $D$, there is $d^{\prime}=(i_{d^{\prime}}, j_{d^{\prime}}, l_{d^{\prime}})\in D$ such that $d_1=(i_{d_1}, j_{d_1}, l_{d_1})\leq d^{\prime}=(i_{d^{\prime}}, j_{d^{\prime}}, l_{d^{\prime}})$ and $d=(i_d, j_d, l_d)\leq d^{\prime}=(i_{d^{\prime}}, j_{d^{\prime}}, l_{d^{\prime}})$. Hence $i_{d^{\prime}}=i_d=i_{d_0}, j_{d^{\prime}}=j_d=j_{d_0}$ and $l_{d_0}\leq l_{d^{\prime}}, l_d\leq l_{d^{\prime}}$. Let $i_D=i_{d_1}$ and $j_D=j_{d_1}$. Then $D\subseteq \{(i_D, j_D, l) : l\in \mathbb{N}\}$. Clearly, $(i_D, j_D, \infty)$ is the unique element of $\mathcal L^{\infty}$ satisfying Conditions (i)-(iii).

For any $D\in \mathcal D(\mathcal L)$, if $D$ has a largest element $s$, then $s=\vee D$; if $D$ has no largest element, then $D$ is infinite directed subset of $\mathcal L$, whence there is a unique $(i, j, \infty)\in \mathcal L^{\infty}$ such that Conditions (i)-(iii) are satisfied. Then $(i, j, \infty)=\bigvee_{\mathcal L} D$.

\item $\mathcal L$ is a dcpo and $\mathrm{Id} (\mathcal L)$ is countable by (3).

\item $\Sigma~\!\! (\mathcal L\times \mathcal L)=\Sigma~\!\! \mathcal L\times \Sigma~\!\!\mathcal L$ by (4) and Corollary \ref{Scott-topology-on-product-of-two-special-countable-dcpos}.

\end{enumerate}

\end{example}

\section{Property R and sobriety of Scott topology on dcpos}\label{Sec-Sobriety-of-Scott-topology-on-dcpos}

The following concept was first introduced in \cite[Definition 10.2.11]{Xu-2016-2} (see also \cite{Wen-Xu-2018}).

\begin{definition}\label{def-property-R}  (\cite{Xu-2016-2}) A $T_0$ space $X$ is said to have \emph{property R} if for any family $\{\mathord{\uparrow}F_{i}:i\in I\}\subseteq \mathbf{Fin}~P$ and any $U\in \mathcal O(X)$, $\bigcap_{i\in I}\mathord{\uparrow}F_{i}\subseteq U$ implies  $\bigcap_{i\in I_{0}}\mathord{\uparrow}F_{i}\subseteq U$ for some $\exists I_{0}\in I^{(<\omega)}$. For a poset $P$, when $\Sigma~\!\!P$ has property R, we will simply say that $P$ has property R.
\end{definition}

\begin{proposition}\label{basic-property-of-property-R} For a poset $P$, consider the following conditions:
\begin{enumerate}[\rm (1)]
\item $(P,\lambda(P))$ is compact.
\item $(P,\lambda(P))$ is upper semicompact.
 \item $P$ has property R.
 \item For any filtered family $\{\mathord{\uparrow}F_{i}:i\in I\}\subseteq \mathbf{Fin}~P$ and any $U\in \sigma (P)$, $\bigcap_{i\in I}\mathord{\uparrow}F_{i}\subseteq U$ implies $\mathord{\uparrow}F_{j}\subseteq U$ for some $j\in I$.
 \item $P$ is a dcpo.
  \end{enumerate}
  \noindent Then (1) $\Rightarrow$ (2) $\Rightarrow$ (3) $\Rightarrow$ (4) $\Rightarrow$ (5).
\end{proposition}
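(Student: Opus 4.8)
The plan is to prove the four implications in order, noting that $(1)\Rightarrow(2)$, $(2)\Rightarrow(3)$ and $(3)\Rightarrow(4)$ are comparatively routine while the real work lies in $(4)\Rightarrow(5)$.

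For $(1)\Rightarrow(2)$ I would observe that every $\ua x$ is $\lambda(P)$-closed, because $P\setminus\ua x$ is one of the subbasic sets generating the lower topology $\omega(P)\subseteq\lambda(P)$; since a closed subset of a compact space is compact, compactness of $(P,\lambda(P))$ makes each $\ua x$ be $\lambda(P)$-compact, which is precisely upper semicompactness.

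For $(2)\Rightarrow(3)$ the argument is a finite-intersection-property computation carried out in the Lawson topology. Given $\{\ua F_i:i\in I\}\subseteq\mathbf{Fin}P$ and $U\in\sigma(P)$ with $\bigcap_{i\in I}\ua F_i\subseteq U$ (the case $I=\emptyset$ being trivial), I would fix an index $i_0$ and set $K=\ua F_{i_0}=\bigcup_{f\in F_{i_0}}\ua f$. By upper semicompactness each $\ua f$ is $\lambda(P)$-compact, so $K$ is $\lambda(P)$-compact as a finite union. Every $\ua F_i$ is $\lambda(P)$-closed and, as $U\in\sigma(P)\subseteq\lambda(P)$, so is $P\setminus U$; hence the family $\{(P\setminus U)\cap\ua F_i:i\in I\}$ consists of $\lambda(P)$-closed sets whose total intersection is empty. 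Intersecting with the compact set $K$ and invoking the finite intersection property yields a finite $I_1\subseteq I$ with $K\cap(P\setminus U)\cap\bigcap_{i\in I_1}\ua F_i=\emptyset$, and then $I_0=I_1\cup\{i_0\}$ satisfies $\bigcap_{i\in I_0}\ua F_i\subseteq U$, which is property R. The step $(3)\Rightarrow(4)$ is then immediate: property R supplies a finite $I_0$ with $\bigcap_{i\in I_0}\ua F_i\subseteq U$, and filteredness of the family furnishes a single $j\in I$ with $\ua F_j\subseteq\bigcap_{i\in I_0}\ua F_i\subseteq U$.

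The \textbf{main obstacle} is $(4)\Rightarrow(5)$, where directed suprema must be extracted from the seemingly weak condition (4). Let $D$ be directed and write $M=\bigcap_{d\in D}\ua d$ for the set of upper bounds of $D$; the family $\{\ua d:d\in D\}\subseteq\mathbf{Fin}P$ is filtered because $D$ is directed. Applying (4) with $U=\emptyset\in\sigma(P)$ first shows $M\neq\emptyset$, since otherwise some $\ua d_j=\emptyset$, which is absurd. The decisive idea is the choice of test open set $U=P\setminus\overline{D}$, where $\overline{D}$ denotes the Scott closure of $D$. The lemma I would verify is that any $m\in M\cap\overline{D}$ is automatically the least upper bound of $D$: for an arbitrary $m''\in M$ one has $D\subseteq\,\da m''$, and since $\da m''$ is Scott closed this forces $\overline{D}\subseteq\,\da m''$ and hence $m\leq m''$. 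Consequently, if $\vee D$ did not exist then $M\cap\overline{D}=\emptyset$, i.e. $\bigcap_{d\in D}\ua d=M\subseteq P\setminus\overline{D}=U$; condition (4) would then produce some $j$ with $\ua d_j\subseteq P\setminus\overline{D}$, contradicting $d_j\in D\subseteq\overline{D}$. Therefore $\vee D$ exists for every directed $D$, so $P$ is a dcpo. I expect the only genuine subtlety to be confirming that $M\cap\overline{D}\neq\emptyset$ characterizes exactly the existence of $\vee D$; everything else is bookkeeping.
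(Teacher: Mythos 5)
Your proposal is correct and follows essentially the same route as the paper: closedness of $\ua x$ in $\lambda(P)$ for (1)$\Rightarrow$(2), compactness of a fixed $\ua F_{i_0}$ against the cover by $U$ and the complements $P\setminus\ua F_i$ (your finite-intersection-property phrasing is just the dual formulation) for (2)$\Rightarrow$(3), filteredness for (3)$\Rightarrow$(4), and the test open set $U=P\setminus\overline{D}$ together with the observation that any upper bound of $D$ lying in $\overline{D}$ is $\vee D$ for (4)$\Rightarrow$(5). Your preliminary step applying (4) with $U=\emptyset$ is harmless but redundant, since the main contradiction argument already covers the case $M=\emptyset$.
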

\begin{proof} (1) $\Rightarrow$ (2): Trivial.

(2) $\Rightarrow$ (3): Suppose that $\{\mathord{\uparrow}F_{j}:j\in J\}\subseteq \mathbf{Fin}~P$ and $U\in \sigma (P)$ with $\bigcap_{j\in J}\mathord{\uparrow}F_{j}\subseteq U$. Select an $j_0\in J$. Then $\uparrow F_{j_0}\subseteq U\cup \bigcup_{j\in J\setminus\{j_0\}}(P\setminus \mathord{\uparrow}F_{j})$. As $(P,\lambda(P))$ is upper semicompact and $F_{j_0}$ is finite, $\uparrow F_{j_0}$ is Lawson compact, whence there is $J_{0}\in (J\setminus\{j_0\})^{(<\omega)}$ such that $\uparrow F_{j_0}\subseteq U\cup \bigcup_{j\in J_0}(P\setminus \mathord{\uparrow}F_{j})$ or, equivalently, $\bigcap_{j\in J_{0}\cup\{j_o\}}\mathord{\uparrow}F_{j}\subseteq U$. Thus $P$ has property R.

(3) $\Rightarrow$ (4): For a filtered family $\{\mathord{\uparrow}F_{i}:i\in I\}\subseteq \mathbf{Fin}~P$ and $U\in \sigma (P)$ with $\bigcap_{i\in I}\mathord{\uparrow}F_{i}\subseteq U$, since $P$ has property R, there is $I_{0}\in I^{(<\omega)}$ such that $\bigcap_{i\in I_{0}}\mathord{\uparrow}F_{i}\subseteq U$. As $\{\mathord{\uparrow}F_{i}:i\in I\}$ is filtered, there exists a $j\in I$ such that $F_j\subseteq  \bigcap_{i\in I}\mathord{\uparrow}F_{i}$. Hence $\mathord{\uparrow}F_{j}\subseteq U$.

(4) $\Rightarrow$ (5): Let $D\in \mathcal D(P)$. Then $\{\uparrow d : d\in D\}$ is a filtered family. We show that $\overline{D}\cap \bigcap_{d\in D}\{\uparrow d : d\in D\}\neq\emptyset$. Assume, on the contrary, that $\overline{D}\cap \bigcap_{d\in D}\{\uparrow d : d\in D\}=\emptyset$, then $\bigcap_{d\in D}\{\uparrow d : d\in D\}\subseteq P\setminus \overline{D}\in \sigma (P)$. By (3), $\uparrow d\subseteq P\setminus \overline{D}$ for some $d\in D$, a contradiction. So $\overline{D}\cap \bigcap_{d\in D}\{\uparrow d : d\in D\}\neq\emptyset$. Select an $x\in \overline{D}\cap \bigcap_{d\in D}\{\uparrow d : d\in D\}$. It is easy to verify that $x=\vee D$. Therefore, $P$ is a dcpo.
\end{proof}

\begin{theorem}\label{property-R-charac} For a poset $P$, the following conditions are equivalent:
\begin{enumerate}[\rm (1)]
\item $P$ has property R.
\item For any $\{x_{i}:i\in I\}\subseteq P$ and any $U\in \sigma (P)$ with $\bigcap_{i\in I}\mathord{\uparrow}x_{i}\subseteq U$, there is $I_{0}\in I^{(<\omega)}$ such that $\bigcap_{i\in I_{0}}\mathord{\uparrow}x_{i}\subseteq U$.
\item The compact saturated subsets of $(P, \omega(P))$ are exactly the closed subsets of $\Sigma~\!\!P$.
\item Every Scott closet subset of $P$ is compact in $\Sigma~\!\!P$.
\end{enumerate}
\end{theorem}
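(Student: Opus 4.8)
The plan is to show the four conditions are equivalent by running the cycle $(1)\Rightarrow(2)\Rightarrow(3)\Rightarrow(4)\Rightarrow(1)$. The implication $(1)\Rightarrow(2)$ is immediate: each $\ua x_i=\ua\{x_i\}$ with $\{x_i\}\in P^{(<\omega)}$, so (2) is merely property R restricted to singleton generating sets. The engine for the rest is a single dictionary. A Scott-closed set is exactly a lower set of $P$ closed under existing directed sups; the saturated sets of $(P,\omega(P))$ are precisely the lower sets of $P$, since the specialization order of the lower topology is the reverse of $\leq$ (each $\da x$ is $\omega$-open as a complement of the subbasic closed set $\ua x$); and for any $U\in\sigma(P)$ one has $\bigcap_{i\in I}\ua F_i\subseteq U$ if and only if the lower-open sets $\{P\setminus\ua F_i:i\in I\}$ cover the Scott-closed set $P\setminus U$. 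Through this correspondence, ``a finite subfamily suffices'' in property R is literally ``a finite subcover of $P\setminus U$ exists'' in $(P,\omega(P))$.

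For $(2)\Rightarrow(3)$ I would prove the two inclusions defining (3) separately. That a compact saturated set $K$ of $(P,\omega(P))$ is Scott-closed needs no hypothesis: $K$ is a lower set, and if $D\subseteq K$ is directed with $s=\vee D$ existing but $s\notin K$, then each $c\in K$ must fail to lie above some $d\in D$ (otherwise $c\geq s$ and the lower set $K$ would contain $s$), so $\{P\setminus\ua d:d\in D\}$ is a lower-open cover of $K$; a finite subcover together with an upper bound $d^{\ast}\in D$ of the finitely many indices produces $d^{\ast}\in K$ yet $d^{\ast}\notin K$, a contradiction. The reverse inclusion---every Scott-closed $C$ is compact in $(P,\omega(P))$---is the main obstacle, and here I would invoke Alexander's subbase lemma: it suffices to extract a finite subcover from an arbitrary cover of $C$ by the \emph{subbasic} lower-open sets $P\setminus\ua x_k$. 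Such a cover says exactly $\bigcap_k\ua x_k\subseteq P\setminus C\in\sigma(P)$, whereupon (2) hands back a finite $K_0$ with $\bigcap_{k\in K_0}\ua x_k\subseteq P\setminus C$, i.e.\ a finite subcover. This reduction from finitely generated upper sets to singletons, legitimized by Alexander's lemma, is precisely what lets the ostensibly weaker singleton condition (2) recover the full strength of property R.

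Finally, $(3)\Rightarrow(4)$ is a weakening, since (3) asserts in particular that every Scott-closed set is compact (in the lower topology). For $(4)\Rightarrow(1)$ I would run the dictionary backwards: given $\{\ua F_i:i\in I\}\subseteq\mathbf{Fin}\,P$ and $U\in\sigma(P)$ with $\bigcap_{i\in I}\ua F_i\subseteq U$, the set $C=P\setminus U$ is Scott-closed, hence compact by (4), while $\{P\setminus\ua F_i:i\in I\}$ is a cover of $C$ by lower-open sets; a finite subcover indexed by some $I_0\in I^{(<\omega)}$ gives $\bigcap_{i\in I_0}\ua F_i\subseteq U$, which is property R. I expect the Alexander-subbase step in $(2)\Rightarrow(3)$ to be the crux of the whole argument; every other step is bookkeeping with complements and with the reversed specialization order of $\omega(P)$.
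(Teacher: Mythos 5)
Your proof is correct and follows essentially the same route as the paper's: (1)$\Rightarrow$(2) is trivial, (2)$\Rightarrow$(3) is split into the same two inclusions with Alexander's Subbase Lemma doing the work in the compactness direction, and (3)$\Rightarrow$(4)$\Rightarrow$(1) is the same complementation bookkeeping. The one real difference is local: for the inclusion ``compact saturated in $(P,\omega(P))$ implies Scott closed'' the paper simply cites Lemma~\ref{upper-semiclosed-closed} (the order of $(P,\omega(P))$ is upper semiclosed, so compact sets have Scott-closed down-closures), whereas you inline a self-contained argument (cover the compact lower set by the $\omega$-open sets $P\setminus\ua d$, $d\in D$, and use directedness of $D$ to contradict a finite subcover); this is in effect the standard proof of that cited lemma, so you lose nothing and gain self-containedness. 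Two small repairs to your write-up: the parenthetical claim that each $\da x$ is $\omega(P)$-open ``as the complement of $\ua x$'' is false in general ($P\setminus\ua x$ is a lower set but usually properly contains $\da x$, and $\da x$ need not be $\omega$-open at all, e.g.\ for $P=\mathbb{R}$, where the nonempty proper $\omega$-open sets are exactly the open lower rays); the correct one-line justification of your dictionary is $\cl_{\omega(P)}\{x\}=\ua x$, which yields $\leq_{\omega(P)}=(\leq_P)^{op}$ and hence that the saturated sets of $(P,\omega(P))$ are exactly the lower sets of $P$ --- nothing downstream is affected. Finally, you correctly (and silently) read condition (4) as compactness in $(P,\omega(P))$ rather than in $\Sigma~\!\!P$ as literally printed; this matches the paper's own use of (4) in the step (4)$\Rightarrow$(1) (so the statement's ``compact in $\Sigma~\!\!P$'' is evidently a typo), and the literal Scott-topology reading would in fact break the equivalence: an infinite antichain has property R, yet its Scott topology is discrete, so its infinite Scott-closed subsets are not Scott-compact.
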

\begin{proof} (1)$\Rightarrow$(2): Trivial.

(2)$\Rightarrow$(3): Suppose that $C$ is a compact saturated subset of $(P,\omega(P))$, then $C=\downarrow_P C$ (note that $\leq_{\omega (P)}=(\leq_P)^{op}$). By Lemma \ref{upper-semiclosed-closed}, $C\in\mathcal C(\Sigma~\!\!P)$. Conversely, assume that $C$ is a Scott closed subset of $P$ and $\{x_{i}:i\in I\}\subseteq P$ with $C\subseteq \bigcup_{i\in I}(P\setminus \mathord{\uparrow}x_{i})=P\setminus \bigcap_{i\in I}\mathord{\uparrow}x_{i}$. Then $\bigcap_{i\in I}\mathord{\uparrow}x_{i}\subseteq P\setminus C\in \sigma(P)$. By (2), there is $I_{0}\in I^{(<\omega)}$ such that $\bigcap_{i\in I_{0}}\mathord{\uparrow}x_{i}\subseteq P\setminus C$ or, equivalently, $C\subseteq \bigcup_{i\in I_{0}}(P\setminus \mathord{\uparrow}x_{i})$. Thus $C$ is compact in $(P,\omega(P))$ by Alexander's Subbase Lemma (see, eg., \cite[Proposition I-3.22]{GHKLMS-2003}).

(3)$\Rightarrow$(4): Trivial.

(4)$\Rightarrow$(1): Suppose that $\{\mathord{\uparrow}F_{i}:i\in I\}\subseteq \mathbf{Fin}~P$ and $U\in \sigma (P)$ with $\bigcap_{i\in I}\mathord{\uparrow}F_{i}\subseteq U$. Then $P\setminus U\subseteq \bigcup_{i\in I}(P\setminus \mathord{\uparrow}F_{i})$. By (4), $P\setminus U$ is compact in $(P,\omega(P))$, and hence there is $I_{0}\in I^{(<\omega)}$ such that $P\setminus U\subseteq \bigcup_{i\in I_{0}}(P\setminus \mathord{\uparrow}F_{i})$, that is, $\bigcap_{i\in I_{0}}\mathord{\uparrow}F_{i}\subseteq U$. Thus $P$ has property R.
\end{proof}

\begin{proposition}\label{Scott-compact-Lawson-compact} Let $P$ be a poset. If $\Sigma~\!\!P$ is compact and $\bigcap_{x\in F}\uparrow x$ is compact in $\Sigma~\!\!P$ for each $F\in P^{(<\omega)}$, then the following two conditions are equivalent:
\begin{enumerate}[\rm (1)]
\item $(P,\lambda(P))$ is compact.
\item $P$ has property R.
\end{enumerate}
\end{proposition}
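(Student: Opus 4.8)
The plan is to dispose of the direction (1) $\Rightarrow$ (2) immediately and concentrate all the work on (2) $\Rightarrow$ (1). Indeed, (1) $\Rightarrow$ (2) needs no extra hypotheses: by Proposition~\ref{basic-property-of-property-R}, compactness of $(P,\lambda(P))$ already implies upper semicompactness, which in turn implies property R. So I would assume $P$ has property R, together with the standing hypotheses that $\Sigma~\!\!P$ is compact and that $\bigcap_{x\in F}\ua x$ is compact in $\Sigma~\!\!P$ for every $F\in P^{(<\omega)}$, and aim to show that $(P,\lambda(P))$ is compact.

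The natural tool is Alexander's Subbase Lemma (as cited in \cite[Proposition I-3.22]{GHKLMS-2003}), applied to the subbase $\mathcal S=\sigma(P)\cup\{P\setminus\ua x : x\in P\}$ of the Lawson topology $\lambda(P)=\sigma(P)\vee\omega(P)$. I would start from an arbitrary cover of $P$ by members of $\mathcal S$ and split it as a family $\{U_j : j\in J\}\subseteq\sigma(P)$ together with a family $\{P\setminus\ua x_i : i\in I\}$. Writing $U=\bigcup_{j\in J}U_j\in\sigma(P)$, the covering condition $P=U\cup\bigcup_{i\in I}(P\setminus\ua x_i)$ is exactly the containment $\bigcap_{i\in I}\ua x_i\subseteq U$. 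This is precisely the hypothesis in the characterization of property R given by Theorem~\ref{property-R-charac}(2), so there is a finite $I_0\in I^{(<\omega)}$ with $\bigcap_{i\in I_0}\ua x_i\subseteq U$.

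The two standing compactness assumptions then finish the argument. The set $\bigcap_{i\in I_0}\ua x_i=\bigcap_{x\in F}\ua x$, where $F=\{x_i : i\in I_0\}\in P^{(<\omega)}$, is compact in $\Sigma~\!\!P$ by hypothesis; since $\{U_j : j\in J\}$ is a Scott-open cover of it, there is a finite $J_0\in J^{(<\omega)}$ with $\bigcap_{i\in I_0}\ua x_i\subseteq\bigcup_{j\in J_0}U_j$. Then $\{U_j : j\in J_0\}\cup\{P\setminus\ua x_i : i\in I_0\}$ is a finite subcover, because $P\setminus\bigcap_{i\in I_0}\ua x_i=\bigcup_{i\in I_0}(P\setminus\ua x_i)$. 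The one degenerate case, $I=\emptyset$ (a cover by Scott-open sets only), is not covered by Theorem~\ref{property-R-charac}(2), since $I^{(<\omega)}$ contains only nonempty sets; here $P=U$ and the compactness of $\Sigma~\!\!P$ directly yields a finite subcover. By Alexander's Subbase Lemma, $(P,\lambda(P))$ is compact.

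The only real subtlety I anticipate is bookkeeping the order-theoretic duality correctly: a union of lower subbasic sets is the complement of an intersection of principal filters, so that property R --- phrased for such an intersection sitting inside a Scott-open set --- is exactly what converts the arbitrary index set $I$ into a finite $I_0$. The main point to verify carefully is that each of the two compactness assumptions is genuinely used: the hypothesis on finite intersections $\bigcap_{x\in F}\ua x$ handles the generic case, while compactness of $\Sigma~\!\!P$ is needed only for the purely Scott-open covers. One should also confirm that $\mathcal S$ is indeed a subbase of $\lambda(P)$, so that Alexander's Lemma legitimately applies.
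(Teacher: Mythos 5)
Your proposal is correct and takes essentially the same route as the paper's proof: reduce to subbasic Lawson covers, split off the Scott-open part $U$, apply property R to shrink $\bigcap_{i\in I}\ua x_i\subseteq U$ to a finite intersection, cover that finite intersection (Scott-compact by hypothesis) by finitely many Scott opens, and use compactness of $\Sigma~\!\!P$ for the degenerate purely Scott-open cover, which is the paper's Case 1. The only (harmless) differences are that you use the point-generated subbase $\{P\setminus\ua x : x\in P\}$ so that Theorem~\ref{property-R-charac}(2) applies directly, whereas the paper covers with basic sets $P\setminus\ua F_j$ and must decompose $\bigcap_{j\in J_0}\ua F_j$ as the finite union $\bigcup\bigl\{\bigcap_{j\in J_0}\ua\varphi(j):\varphi\in\prod_{j\in J_0}F_j\bigr\}$ of hypothesis-compact sets, and that you make explicit the appeal to Alexander's Subbase Lemma, which the paper leaves implicit in restricting attention to covers of this special form.
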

\begin{proof} (1)$\Rightarrow$(2): By Proposition \ref{basic-property-of-property-R}.

(2) $\Rightarrow$ (1): Let $\{U_{i}\in \sigma(P) : i\in I\}\cup \{P\setminus \mathord{\uparrow}F_{j}:\mathord{\uparrow}F_{j}\in \mathbf{Fin}~P, j\in J\}$ be a Lawson open cover of $P$. Then $P=\bigcup_{i\in I}U_{i}\cup \bigcup_{j\in J}(P\setminus \mathord{\uparrow}F_{j})=\bigcup_{i\in I}U_{i}\cup (P\setminus \bigcap_{j\in J}\mathord{\uparrow}F_{j})$.

{\bf Case 1:} $\bigcup_{j\in J}(P\setminus \mathord{\uparrow}F_{j})=\emptyset$, that is, $P=\bigcup_{i\in I}U_{i}$.

Then by the compactness of $\Sigma~\!\!P$, there is $I_{0}\in I^{(<\omega)}$ such that $P=\mathord{\uparrow}F=\bigcup_{i\in I_{0}}U_{i}$.

{\bf Case 2:} $\bigcup_{j\in J}(P\setminus \mathord{\uparrow}F_{j})\not=\emptyset$.

Then $\bigcap_{j\in J}\mathord{\uparrow}F_{j}\subseteq \bigcup_{i\in I}U_{i}\in \sigma(P)$. By the property R of $P$, there exists $J_{0}\in J^{(<\omega)}$ such that $\bigcap_{j\in J_{0}}\mathord{\uparrow}F_{j}\subseteq \bigcup_{i\in I}U_{i}$. By assumption, $\bigcap_{j\in J_{0}}\mathord{\uparrow}F_{j}=\bigcup \{\bigcap_{j\in J_{0}}\mathord{\uparrow}\varphi(j):\varphi\in \prod_{j\in J_{0}}F_{j}\}$ is compact in $(P,\sigma(P))$, whence $\bigcap_{j\in J_{0}}\mathord{\uparrow}F_{j}\subseteq \bigcup_{i\in I_{0}}U_{i}$ for some $I_{0}\in I^{(<\omega)}$. So $P=\bigcup_{i\in I_{0}}U_{i}\cup \bigcup_{j\in J_{0}}(P\setminus \mathord{\uparrow}F_{j})$.

Thus $(P,\lambda(P))$ is compact.
\end{proof}

Clearly, if $(P,\lambda(P))$ is compact, then $(P,\sigma(P))$ is compact and for each $F=\{x_1, x_2, ..., x_n\}\in P^{(<\omega)}$, $\mathord{\uparrow}x_{1}\cap \mathord{\uparrow}x_{2}\cap \ldots \cap \mathord{\uparrow}x_{n}$ is a closed subset of $(P,\lambda(P))$, whence it is compact in $(P,\lambda(P))$, and consequently, compact in $(P,\sigma(P))$. So Proposition \ref{Scott-compact-Lawson-compact} can be restated as the following one.

\begin{proposition}\label{Scott-compact-Lawson-compact+1} For a poset, the following two conditions are equivalent:
 \begin{enumerate}[\rm (1)]
\item $(P,\lambda(P))$ is compact.
\item $\Sigma~\!\!P$ is compact, $\mathord{\uparrow}x_{1}\cap \mathord{\uparrow}x_{2}\cap \ldots \cap \mathord{\uparrow}x_{n}$ is Scott compact (i.e., compact in $\Sigma~\!\!P$) for each $\{x_1, x_2, ..., x_n\}\in P^{(<\omega)}$ and $P$ has property R.
\end{enumerate}
\end{proposition}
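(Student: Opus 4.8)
The plan is to deduce this directly from Proposition~\ref{Scott-compact-Lawson-compact}, of which it is essentially a self-contained reformulation: the two extra clauses appended to property R in condition~(2) are precisely the standing hypotheses of that proposition. So the only real work is the forward implication (1)~$\Rightarrow$~(2), which amounts to recording a few routine facts about how compactness behaves between the Scott and Lawson topologies.

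For (1)~$\Rightarrow$~(2), I would assume $(P,\lambda(P))$ is compact. First, since $\sigma(P)\subseteq\lambda(P)$, every Scott-open cover is a Lawson-open cover, so compactness of $(P,\lambda(P))$ forces $\Sigma~\!\!P$ to be compact; the same inclusion shows that any $\lambda(P)$-compact set is automatically $\sigma(P)$-compact. Next, for a finite $F=\{x_1,\dots,x_n\}$, each $\uparrow x_i$ is closed in $\omega(P)$ by the very definition of the lower topology, hence closed in $\lambda(P)$, so $\bigcap_{i=1}^{n}\uparrow x_i$ is $\lambda(P)$-closed. Being a closed subset of the compact space $(P,\lambda(P))$, it is $\lambda(P)$-compact, and therefore Scott compact by the previous remark. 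Finally, property R of $P$ follows at once from the implication (1)~$\Rightarrow$~(3) of Proposition~\ref{basic-property-of-property-R}. This establishes all three clauses of condition~(2).

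For (2)~$\Rightarrow$~(1), I would simply invoke Proposition~\ref{Scott-compact-Lawson-compact}. Condition~(2) asserts that $\Sigma~\!\!P$ is compact and that $\uparrow x_1\cap\cdots\cap\uparrow x_n$ is Scott compact for every $\{x_1,\dots,x_n\}\in P^{(<\omega)}$, which are exactly the hypotheses under which that proposition applies, together with property R. The equivalence (1)~$\Leftrightarrow$~(2) established there then yields that $(P,\lambda(P))$ is compact, completing the argument. I do not anticipate a genuine obstacle; the only point demanding care is the bookkeeping of which topology each compactness claim refers to, and in particular the direction in which compactness transfers between the comparable topologies $\sigma(P)\subseteq\lambda(P)$.
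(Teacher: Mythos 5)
Your proposal is correct and matches the paper's own argument essentially verbatim: the paper also obtains (1) $\Rightarrow$ (2) by noting that each $\uparrow x_1\cap\cdots\cap\uparrow x_n$ is Lawson-closed, hence Lawson-compact, hence Scott-compact (with property R from Proposition~\ref{basic-property-of-property-R}), and then treats the proposition as a restatement of Proposition~\ref{Scott-compact-Lawson-compact} for the converse. No gaps; your care about the direction in which compactness transfers between $\sigma(P)\subseteq\lambda(P)$ is exactly the right (and only) delicate point.
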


The following concept was introduced by Lawson, Wu and Xi in \cite{Lawson-Wu-Xi-2020}.

\begin{definition}\label{def-Lawson-Omega-star-compact} (\cite{Lawson-Wu-Xi-2020}) A $T_0$ space $X$ is said to be $\Omega^{\ast}$-\emph{compact} if every closed subset of $X$ is compact in $(X, \omega(X))$.
\end{definition}

As $\{X\setminus \ua F : F\in \mathbf{Fin} X\}$ is a base of $\omega (X)$, we clearly have the following result.

\begin{proposition}\label{property-R-equiv-Omega-star-compact} Let $X$ be a $T_0$ space. Then the following two conditions are equivalent:
\begin{enumerate}[\rm (1)]
\item $X$ has property R.
\item $X$ is $\Omega^{\ast}$-compact.
\end{enumerate}
\end{proposition}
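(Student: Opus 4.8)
The plan is to exploit the fact, noted just above the statement, that the collection $\{X\setminus\uparrow F : F\in X^{(<\omega)}\}$ is a base for the lower topology $\omega(X)$. This holds because $\omega(X)$ is generated by the subbase $\{X\setminus\uparrow x : x\in X\}$, and a finite intersection of such sets is precisely $\bigcap_{x\in F}(X\setminus\uparrow x)=X\setminus\uparrow F$. With this base in hand, both implications reduce to a routine passage to complements, so I expect no genuine obstacle; the entire content lies in matching the quantifiers of Definition \ref{def-property-R} against those of Definition \ref{def-Lawson-Omega-star-compact}.

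For $(1)\Rightarrow(2)$, first I would fix a closed set $C\subseteq X$ and, to test its compactness in $(X,\omega(X))$, reduce to covers by basic open sets $\{X\setminus\uparrow F_i : i\in I\}$, since compactness may be verified on a fixed base. The covering condition $C\subseteq\bigcup_{i\in I}(X\setminus\uparrow F_i)$ is equivalent to $\bigcap_{i\in I}\uparrow F_i\subseteq X\setminus C$, and $X\setminus C\in\mathcal O(X)$ because $C$ is closed. Property R then yields a finite $I_0\in I^{(<\omega)}$ with $\bigcap_{i\in I_0}\uparrow F_i\subseteq X\setminus C$, that is, $C\subseteq\bigcup_{i\in I_0}(X\setminus\uparrow F_i)$; hence $C$ is compact in $(X,\omega(X))$ and $X$ is $\Omega^{\ast}$-compact.

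For the converse $(2)\Rightarrow(1)$, I would run the same correspondence backwards. Given $\{\uparrow F_i : i\in I\}\subseteq\mathbf{Fin}\,X$ and $U\in\mathcal O(X)$ with $\bigcap_{i\in I}\uparrow F_i\subseteq U$, passing to complements gives $X\setminus U\subseteq\bigcup_{i\in I}(X\setminus\uparrow F_i)$. Since $U$ is open, $X\setminus U$ is closed, hence compact in $(X,\omega(X))$ by hypothesis; as each $X\setminus\uparrow F_i$ is $\omega(X)$-open, there is a finite $I_0$ with $X\setminus U\subseteq\bigcup_{i\in I_0}(X\setminus\uparrow F_i)$, which is exactly $\bigcap_{i\in I_0}\uparrow F_i\subseteq U$. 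This is precisely property R.

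The only point demanding any care, and the closest thing to an obstacle, is the justification that compactness of $C$ in $(X,\omega(X))$ can be verified using only the distinguished base $\{X\setminus\uparrow F\}$ rather than arbitrary open covers. This is the standard fact that a space is compact if and only if every cover by members of a fixed base admits a finite subcover (equivalently, Alexander's Subbase Lemma applied to the subbase $\{X\setminus\uparrow x\}$). Everything else is bookkeeping with complements.
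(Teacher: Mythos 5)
Your proof is correct and follows exactly the route the paper intends: the paper gives no separate proof, remarking only that the equivalence is clear because $\{X\setminus \mathord{\uparrow}F : F\in \mathbf{Fin}\,X\}$ is a base of $\omega(X)$, and your argument simply spells out that complement-passing, including the standard reduction of compactness to covers by members of a fixed base. Nothing is missing and nothing differs in substance.
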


\begin{remark}\label{Xu-book-property-R} Proposition \ref{basic-property-of-property-R}, Theorem \ref{property-R-charac}, Proposition \ref{Scott-compact-Lawson-compact} and Proposition \ref{Scott-compact-Lawson-compact+1} were first given in \cite{Xu-2016-2}. Since the book \cite{Xu-2016-2} was written in Chinese, we present them and their proofs here. See \cite{Xu-2016-2} for more discussions about property R.
\end{remark}

\begin{lemma}\label{lem-Jia-Jung-Li} (\cite{Jia-Jung-2016})  Let $P$ be a dcpo for which $\Sigma~\!\!P$ is well-filtered. Then the following three conditions are equivalent:
\begin{enumerate}[\rm (1)]
\item $\Sigma~\!\!P$ is coherent.
\item $\mathord{\uparrow}x_{1}\cap \mathord{\uparrow}x_{2}\cap \ldots \cap \mathord{\uparrow}x_{n}$ is Scott compact for all finite nonempty set $\{x_1, x_2, ..., x_n\}$ of $P$.
\item $\uparrow x \cap \uparrow y$ is Scott compact for all $x, y\in P$.
\end{enumerate}
\end{lemma}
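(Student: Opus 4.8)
The plan is to establish the cycle (1) $\Rightarrow$ (2) $\Rightarrow$ (3) $\Rightarrow$ (1), with the real content concentrated in the last implication. The implications (1) $\Rightarrow$ (2) and (2) $\Rightarrow$ (3) are routine: in any $T_0$ space each $\ua x$ is compact saturated (any open cover contains a member through $x$, which, being an upper set, already swallows $\ua x$), so coherence together with an easy induction on $n$ shows that $\ua x_{1}\cap\cdots\cap\ua x_{n}$ is compact, giving (1) $\Rightarrow$ (2); and (2) $\Rightarrow$ (3) is just the case $n=2$.

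For the hard direction (3) $\Rightarrow$ (1) I would prove coherence by a two-step bootstrap, the engine in both steps being Proposition \ref{WF-filtered-compact-sets-cap-compact}: in a well-filtered space the intersection of a filtered family of compact saturated sets is again compact saturated. The key auxiliary observation is an openness lemma. Suppose $A$ is a saturated set such that $A\cap\ua w$ is compact for every $w\in P$, and let $U\in\sigma(P)$; then $V_{U}=\{w\in P:A\cap\ua w\subseteq U\}$ is Scott open. That $V_{U}$ is an upper set is immediate, since $w\le w'$ gives $\ua w'\subseteq\ua w$. For the directed-sup condition, given $D\in\mathcal D(P)$ with $\vee D\in V_{U}$, I would use that $P$ is a dcpo to write $\ua(\vee D)=\bigcap_{d\in D}\ua d$, hence $A\cap\ua(\vee D)=\bigcap_{d\in D}(A\cap\ua d)$; the family $\{A\cap\ua d:d\in D\}$ is a filtered family of compact saturated sets whose intersection lies in $U$, so well-filteredness yields a single $d\in D$ with $A\cap\ua d\subseteq U$, i.e.\ $d\in V_{U}$.

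With this lemma in hand the two steps read as follows. First, taking $A=\ua y$ (so that $A\cap\ua w=\ua w\cap\ua y$ is compact by (3)) and a fixed compact saturated $K$, I would attach to a directed Scott-open cover $\mathcal U$ of $K\cap\ua y$ the directed family $\{V_{U}:U\in\mathcal U\}$; each $x\in K$ lies in some $V_{U}$ because $\ua x\cap\ua y$ is compact and contained in $\bigcup\mathcal U$, so $K\subseteq\bigcup_{U}V_{U}$, and compactness of $K$ produces one $U$ with $K\subseteq V_{U}$, whence $K\cap\ua y=\bigcup_{x\in K}(\ua x\cap\ua y)\subseteq U$, proving $K\cap\ua y$ compact. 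Second, I would run the identical argument with $A=K_{1}$ (now $A\cap\ua w=K_{1}\cap\ua w$ is compact by the first step) and a compact saturated $K_{2}$ in the role of $K$, concluding that $K_{1}\cap K_{2}$ is compact; this is exactly coherence, closing the cycle.

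The step I expect to be the main obstacle is precisely (3) $\Rightarrow$ (1): passing from a hypothesis on principal filters $\ua x\cap\ua y$ to arbitrary compact saturated sets, which in a dcpo need not be finitely generated, so no purely finitary union argument can suffice. The device that overcomes this is the openness lemma above, and the only place the full strength of well-filteredness is used is in verifying the directed-sup condition for $V_{U}$; it is what converts the filtered inclusion $\bigcap_{d\in D}(A\cap\ua d)\subseteq U$ into a single inclusion $A\cap\ua d\subseteq U$. I would also take care to first reduce compactness to the covering condition for directed open families (closing an arbitrary cover under finite unions), since the whole argument is phrased for directed covers.
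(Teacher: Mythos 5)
Your proposal is correct, but note that the paper itself contains no proof of this lemma: it is imported verbatim from Jia--Jung--Li \cite{Jia-Jung-2016}, so there is no in-paper argument to compare against. What you have written is, in substance, a sound reconstruction of the known proof of that reference: the easy implications (1) $\Rightarrow$ (2) $\Rightarrow$ (3) are as you say, and your openness lemma (that $V_U=\{w\in P: A\cap\ua w\subseteq U\}$ is Scott open whenever $A$ is saturated with all $A\cap\ua w$ compact), fed twice through a bootstrap --- first with $A=\ua y$ against an arbitrary $K\in\mk(\Sigma~\!\!P)$, then with $A=K_1$ against $K_2$ --- is exactly the right mechanism for passing from principal filters to arbitrary compact saturated sets, which, as you observe, need not be finitely generated. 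Two small points of polish. First, the engine of your directed-sup verification is the defining covering property of well-filteredness (a filtered inclusion $\bigcap_{d\in D}(A\cap\ua d)\subseteq U$ collapses to a single $A\cap\ua d\subseteq U$), not Proposition \ref{WF-filtered-compact-sets-cap-compact}, which only records that the intersection is again compact; your labelling of the latter as ``the engine'' is inaccurate, though harmless, since you never actually need it. Second, since the paper's definition of well-filteredness quantifies over families in $\mk(X)$, whose members are nonempty, you should dispose of the degenerate case separately: if some $A\cap\ua d=\emptyset$ then that $d$ already lies in $V_U$ and there is nothing to prove. Neither point affects the validity of the argument.
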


\begin{proposition}\label{cor-Jia-Jung-Li}  Let $P$ be a dcpo for which $\Sigma~\!\!P$ is well-filtered and coherent. Then $P$ has property R.
\end{proposition}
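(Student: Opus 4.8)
The plan is to verify property R straight from its definition, feeding coherence and well-filteredness into the machinery already assembled in the excerpt: the characterization of coherence in Lemma \ref{lem-Jia-Jung-Li} and the filtered-intersection property of well-filtered spaces in Proposition \ref{WF-filtered-compact-sets-cap-compact}. So I would fix a family $\{\uparrow F_{i}:i\in I\}\subseteq \mathbf{Fin}\,P$ and a Scott open set $U$ with $\bigcap_{i\in I}\uparrow F_{i}\subseteq U$, and aim to produce a finite $I_{0}\subseteq I$ with $\bigcap_{i\in I_{0}}\uparrow F_{i}\subseteq U$. The natural device is the family $\mathcal{K}=\{\bigcap_{i\in I_{0}}\uparrow F_{i}:\emptyset\neq I_{0}\in I^{(<\omega)}\}$ of all nonempty finite intersections. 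This family is filtered (the intersection of two of its members is again one of its members, hence a lower bound of both in the Smyth order), and its total intersection equals $\bigcap_{i\in I}\uparrow F_{i}$. If some finite intersection is empty there is nothing to prove, so we may assume every member of $\mathcal{K}$ is nonempty.

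The key step is to show that each member of $\mathcal{K}$ is a nonempty compact saturated subset of $\Sigma P$, so that $\mathcal{K}\subseteq \mk(\Sigma P)$. Each $\uparrow F_{i}$ is a finite union of the supercompact sets $\uparrow x$, hence compact, and it is clearly saturated. Since $\Sigma P$ is well-filtered and coherent, Lemma \ref{lem-Jia-Jung-Li} applies and guarantees that every finite intersection $\uparrow x_{1}\cap\cdots\cap\uparrow x_{n}$ of principal filters is Scott compact. Expanding $\bigcap_{i\in I_{0}}\uparrow F_{i}=\bigcup\{\bigcap_{i\in I_{0}}\uparrow\varphi(i):\varphi\in\prod_{i\in I_{0}}F_{i}\}$ exhibits each member of $\mathcal{K}$ as a finite union of such intersections, hence compact; being an intersection of upper sets it is also saturated. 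Thus $\mathcal{K}$ is a filtered family of nonempty compact saturated sets.

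Finally, since $\Sigma P$ is well-filtered and $\bigcap\mathcal{K}=\bigcap_{i\in I}\uparrow F_{i}\subseteq U$ with $U$ Scott open, well-filteredness delivers some $K=\bigcap_{i\in I_{0}}\uparrow F_{i}\in\mathcal{K}$ with $K\subseteq U$; this is exactly the finite subfamily required, so $P$ has property R. The step demanding the most care is the middle one, where coherence (via Lemma \ref{lem-Jia-Jung-Li}) and the distributive expansion into a finite union of intersections of principal filters are needed to keep the finite intersections $\bigcap_{i\in I_{0}}\uparrow F_{i}$ inside $\mk(\Sigma P)$; the filteredness of $\mathcal{K}$, the identification of $\bigcap\mathcal{K}$, and the closing appeal to well-filteredness are then routine.
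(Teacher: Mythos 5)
Your proof is correct and follows essentially the same route as the paper: both arguments form the filtered family of nonempty finite intersections, obtain their Scott compactness from Lemma \ref{lem-Jia-Jung-Li}, dispose of the empty-intersection case trivially, and conclude by applying well-filteredness to that family. The only immaterial differences are that the paper first reduces to principal filters $\mathord{\uparrow}x_i$ via condition (2) of Theorem \ref{property-R-charac}, while you handle the sets $\mathord{\uparrow}F_i$ directly through the expansion $\bigcap_{i\in I_0}\mathord{\uparrow}F_i=\bigcup\{\bigcap_{i\in I_0}\mathord{\uparrow}\varphi(i) : \varphi\in\prod_{i\in I_0}F_i\}$ (the same expansion the paper uses in proving Proposition \ref{Scott-compact-Lawson-compact}), and that your parenthetical calling the common refinement a lower bound in the Smyth order should say upper bound (it is a lower bound for inclusion, which is exactly the filteredness that well-filteredness requires), a slip of terminology that does not affect the argument.
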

\begin{proof} Suppose that $\{x_{i}:i\in I\}\subseteq P$ and $U\in \sigma (P)$ with $\bigcap_{i\in I}\mathord{\uparrow}x_{i}\subseteq U$. For each $J\in I^{(<\omega)}$, let $K_{J}=\bigcap_{i\in J}\mathord{\uparrow} x_{i}$. If there is $J_0\in I^{(<\omega)}$ such that $K_{J_0}=\emptyset$, then $\bigcap_{i\in J_0}\mathord{\uparrow} x_{i}\subseteq U$. Now we assume that $K_{J}\neq\emptyset$ for all $J\in I^{(<\omega)}$. Then by Lemma \ref{lem-Jia-Jung-Li}, $\{K_J : J\in I^{(<\omega)}\}\subseteq \mathsf{K}(\Sigma~\!\!P)$ is a filtered family and $\bigcap\limits_{J\in I^{(<\omega)}}K_J= \bigcap_{i\in I}\mathord{\uparrow}x_{i}\subseteq U$. Hence by the well-filteredness of $\Sigma~\!\!P$, $\bigcap_{i\in J^{\prime}}\mathord{\uparrow} x_{i}=K_{J^{\prime}}\subseteq U$  for some $J^{\prime}\in I^{(<\omega)}$. Thus $P$ has property R.
\end{proof}

In what follows, for a poset $P$, let $\omega^{*}(P)$ denote the family of all closed subsets of $(P, \omega(P))$, namely, $\omega^{*}(P)=\{\emptyset, P\}\cup\{\bigcap \mathcal F : \mathcal F\subseteq \mathbf{Fin}P\}$.

\begin{lemma}\label{lem-Q-m-continuous} Let $P$ be a poset and $Q=(\omega^{*}(P),\supseteq)$ (i.e., the order on $Q$ is the reverse inclusion order). Then $m : P\times P\rightarrow Q$, $(x,y)\mapsto \mathord{\uparrow}x\cap\mathord{\uparrow} y$, is Scott continuous.
\end{lemma}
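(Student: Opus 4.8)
The plan is to apply the Scott-continuity criterion of Lemma \ref{Scott-cont-charac}, so that it suffices to check that $m$ preserves every existing directed supremum. First I would record two preliminary facts. On the one hand, $m$ is well defined, since for $x,y\in P$ we have $\ua x,\ua y\in\mathbf{Fin}P$ and hence $\ua x\cap\ua y=\bigcap\{\ua x,\ua y\}\in\omega^{*}(P)$. On the other hand, because the order of $Q$ is reverse inclusion and $\omega^{*}(P)$ is closed under arbitrary intersections, $Q$ is a complete lattice in which every join is computed as an intersection: for any family $\{A_i:i\in I\}\subseteq\omega^{*}(P)$ one has $\bigvee_{Q}A_i=\bigcap_{i\in I}A_i$ (an upper bound of the $A_i$ in $Q$ is exactly a member of $\omega^{*}(P)$ contained in every $A_i$, and the set-theoretically largest such set is $\bigcap_{i\in I}A_i$, which again lies in $\omega^{*}(P)$).

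With this in hand, by Lemma \ref{Scott-cont-charac} it remains to prove that whenever $\mathcal D\in\mathcal D(P\times P)$ has a supremum $(a,b)=\vee\mathcal D$, then
\[ m(a,b)=\ua a\cap\ua b=\bigcap_{(x,y)\in\mathcal D}(\ua x\cap\ua y)=\bigvee_{Q}m(\mathcal D). \]
The inclusion $\ua a\cap\ua b\subseteq\bigcap_{(x,y)\in\mathcal D}(\ua x\cap\ua y)$ is just monotonicity: each $(x,y)\in\mathcal D$ satisfies $(x,y)\leq(a,b)$, whence $\ua a\cap\ua b\subseteq\ua x\cap\ua y$.

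For the reverse inclusion I would use the following observation, which I expect to be the genuinely load-bearing step. Let $z\in\bigcap_{(x,y)\in\mathcal D}(\ua x\cap\ua y)$. Then $x\leq z$ and $y\leq z$ for every $(x,y)\in\mathcal D$, so the diagonal element $(z,z)$ is an upper bound of $\mathcal D$ in $P\times P$. Since $(a,b)$ is the \emph{least} upper bound of $\mathcal D$, this forces $(a,b)\leq(z,z)$, i.e. $a\leq z$ and $b\leq z$, so $z\in\ua a\cap\ua b$. This proves that the two sets coincide, hence $m(\vee\mathcal D)=\bigvee_{Q}m(\mathcal D)$, and $m$ is Scott continuous.

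The only subtlety to keep straight is the direction of the order on $Q$: because it is reverse inclusion, directed suprema in $Q$ are intersections, so ``preserving directed sups'' becomes the set equality displayed above. The crux is then the trick of testing the componentwise bound $z$ against $\mathcal D$ via the diagonal element $(z,z)$, which converts the pointwise information $z\in\ua x\cap\ua y$ for all $(x,y)\in\mathcal D$ into the single inequality $(a,b)\leq(z,z)$ using only that $(a,b)$ is the least upper bound of $\mathcal D$; in particular no decomposition of $\vee\mathcal D$ into coordinate suprema is needed.
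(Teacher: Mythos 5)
Your proof is correct, and it follows the paper's overall strategy --- invoke Lemma \ref{Scott-cont-charac} and verify that $m$ carries an existing directed supremum to the intersection $\bigcap_{(x,y)\in\mathcal D}(\ua x\cap\ua y)$, which is the join in $Q$ because $\omega^{*}(P)$ is closed under arbitrary intersections --- but you establish the key set equality by a genuinely different route. The paper first decomposes the supremum coordinatewise, writing $\bigvee_{d\in D}(x_d,y_d)=(\bigvee_{d\in D}x_d,\bigvee_{d\in D}y_d)$ (a standard fact about suprema in product posets, used there without proof), and then computes the chain $\ua(\bigvee_d x_d)\cap\ua(\bigvee_d y_d)=(\bigcap_d\ua x_d)\cap(\bigcap_d\ua y_d)=\bigcap_d(\ua x_d\cap\ua y_d)$. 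You instead test an arbitrary $z\in\bigcap_{(x,y)\in\mathcal D}(\ua x\cap\ua y)$ against the diagonal element $(z,z)$: it is an upper bound of $\mathcal D$, so leastness of $(a,b)$ forces $(a,b)\leq(z,z)$, i.e.\ $z\in\ua a\cap\ua b$. What your variant buys is a marginally more self-contained argument, since you never need the componentwise description of existing directed suprema in $P\times P$ (and, as you note, your argument in fact uses only the least-upper-bound property of $\vee\mathcal D$, not even directedness); what the paper's route buys is a one-line chain of equalities once the coordinatewise decomposition is granted. Both arguments are equally valid, and your preliminary checks (well-definedness of $m$, and that joins in $Q=(\omega^{*}(P),\supseteq)$ are computed as intersections) correctly fill in details the paper leaves implicit.
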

\begin{proof} Suppose that $\{(x_{d},y_{d}):d\in D\}\in\mathcal D (P\times P)$ for which $\bigvee_{d\in D}(x_d, y_d)$ exists in $P\times P$. Then $\{x_{d}:d\in D\}\in \mathcal D(P)$, $\{y_{d}:d\in D\}\in\mathcal D(P)$, and $\bigvee_{d\in D}x_d$ and $\bigvee_{d\in D} y_d$ exist in $P$. Clearly, $m(\bigvee_{d\in D}(x_{d},y_{d}))=m((\bigvee_{d\in D}x_{d},\bigvee_{d\in D}y_{d}))=(\mathord{\uparrow} \bigvee_{d\in D}x_{d})\cap(\mathord{\uparrow}\bigvee_{d\in D}y_{d})=(\bigcap_{d\in D}\mathord{\uparrow} x_{d})\cap(\bigcap_{d\in D}\mathord{\uparrow} y_{d})=\bigcap_{d\in D}(\mathord{\uparrow} x_{d}\cap \mathord{\uparrow} y_{d})=\bigcap_{d\in D}m(x_{d},y_{d})=\bigvee_{Q}\{m(x_{d},y_{d}):d\in D\}$. By Lemma \ref{Scott-cont-charac}, $m:\Sigma~\!\! (P\times P) \rightarrow \Sigma~\!\!Q$ is continuous.
\end{proof}

\begin{corollary}\label{cor-Q-m-continuous} Let $P$ be a poset and $Q=(\omega^{*}(P),\supseteq)$. If $\Sigma~\!\!(P\times P)=\Sigma~\!\!P\times\Sigma~\!\!P$, then $m: \Sigma~\!\!P\times \Sigma~\!\!P\rightarrow \Sigma~\!\!Q$, $m(x,y)=\uparrow x\cap\uparrow y$, is continuous.
\end{corollary}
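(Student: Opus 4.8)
The plan is to read this off directly from Lemma~\ref{lem-Q-m-continuous} once the hypothesis is taken into account. First I would recall that Lemma~\ref{lem-Q-m-continuous} already establishes that the \emph{very same} set map $m : P\times P\to Q$, $(x,y)\mapsto \ua x\cap\ua y$, is Scott continuous, i.e.\ continuous as a map $\Sigma~\!\!(P\times P)\to \Sigma~\!\!Q$. Nothing about the domain has been exploited there beyond its carrying the Scott topology $\sigma(P\times P)$, so the content of that lemma is precisely continuity of $m$ out of the Scott space of the product.

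Next I would invoke the standing hypothesis $\Sigma~\!\!(P\times P)=\Sigma~\!\!P\times\Sigma~\!\!P$ and stress that it is to be read as an \emph{equality of topological spaces}: the underlying set is $P\times P$ in both cases, and the Scott topology $\sigma(P\times P)$ coincides with the product topology $\sigma(P)\times\sigma(P)$. Consequently, continuity of a map out of $\Sigma~\!\!(P\times P)$ is literally the same assertion as continuity of that map out of $\Sigma~\!\!P\times\Sigma~\!\!P$. Substituting this identification of domains into the conclusion of Lemma~\ref{lem-Q-m-continuous} then gives that $m : \Sigma~\!\!P\times\Sigma~\!\!P\to\Sigma~\!\!Q$ is continuous, which is exactly the claim.

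The conceptual point worth flagging is where the hypothesis does its work. In general the Scott topology on the product is finer than the product of the Scott topologies, so continuity out of $\Sigma~\!\!(P\times P)$ is the \emph{weaker} statement, while continuity out of the coarser space $\Sigma~\!\!P\times\Sigma~\!\!P$ is what we actually want and is \emph{stronger}. The equality $\Sigma~\!\!(P\times P)=\Sigma~\!\!P\times\Sigma~\!\!P$ collapses this gap, upgrading the free continuity supplied by Lemma~\ref{lem-Q-m-continuous} to the desired one. Since there is no genuine obstacle beyond this observation, the corollary is obtained as an immediate restatement of Lemma~\ref{lem-Q-m-continuous}.
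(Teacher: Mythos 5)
Your proposal is correct and matches the paper's intended argument exactly: Lemma~\ref{lem-Q-m-continuous} gives continuity of $m$ out of $\Sigma~\!\!(P\times P)$, and the hypothesis $\Sigma~\!\!(P\times P)=\Sigma~\!\!P\times\Sigma~\!\!P$, read as an equality of topological spaces, turns this into continuity out of $\Sigma~\!\!P\times\Sigma~\!\!P$ (the paper states the corollary without further proof for precisely this reason). Your remark about which statement is weaker and where the hypothesis does its work is also accurate.
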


By Theorem \ref{Scott-topology-product} and Corollary \ref{cor-Q-m-continuous}, we have the following.

\begin{corollary}\label{cor-core-compact-Q-m-continuous}  Let $P$ be a poset and $Q=(\omega^{*}(P),\supseteq)$. If $\Sigma~\!\!P$ is core compact (especially, locally compact), then $m:\Sigma~\!\!P\times \Sigma~\!\!P\rightarrow \Sigma~\!\!Q$, $m(x,y)=\uparrow x\cap\uparrow y$, is continuous.
\end{corollary}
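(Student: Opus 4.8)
The statement is a direct consequence of the two results immediately preceding it, so the plan is simply to chain them. First I would unpack the hypothesis: by the definition of core compactness, $\Sigma~\!\!P$ being core compact means exactly that $\mathcal O(\Sigma~\!\!P)=\sigma(P)$ is a continuous lattice, which is precisely condition (1) of Theorem \ref{Scott-topology-product}. Applying the equivalence (1) $\Leftrightarrow$ (2) of that theorem with the test poset $S$ taken to be $P$ itself then yields $\Sigma~\!\!(P\times P)=\Sigma~\!\!P\times\Sigma~\!\!P$.

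With this product equality in hand, I would invoke Corollary \ref{cor-Q-m-continuous} verbatim: since $\Sigma~\!\!(P\times P)=\Sigma~\!\!P\times\Sigma~\!\!P$, the map $m:\Sigma~\!\!P\times\Sigma~\!\!P\to\Sigma~\!\!Q$, $(x,y)\mapsto\ua x\cap\ua y$, is continuous, which is exactly the conclusion sought. For the parenthetical ``especially, locally compact,'' I would note that every locally compact space is core compact (as recalled in Remark \ref{core-compact-not-LC}), so that case is subsumed under the main hypothesis and needs no separate argument.

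There is no genuine obstacle at this level, since the real work has already been carried out one step earlier. The substantive input is the Scott continuity of $m$ viewed as a map out of $\Sigma~\!\!(P\times P)$, which Lemma \ref{lem-Q-m-continuous} obtains from the sup-preservation criterion of Lemma \ref{Scott-cont-charac} together with the fact that binary intersections of principal upper sets coincide with suprema in $Q=(\omega^{*}(P),\supseteq)$; Corollary \ref{cor-Q-m-continuous} then transfers this continuity from the Scott topology of the product to the product of the individual Scott topologies under the equality $\Sigma~\!\!(P\times P)=\Sigma~\!\!P\times\Sigma~\!\!P$. The only point worth flagging is the legitimacy of specializing $S=P$ when applying Theorem \ref{Scott-topology-product}, which is sound precisely because that theorem quantifies over all posets $S$.
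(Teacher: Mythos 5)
Your proposal is correct and coincides with the paper's own argument: the paper likewise derives $\Sigma~\!\!(P\times P)=\Sigma~\!\!P\times\Sigma~\!\!P$ from core compactness via Theorem \ref{Scott-topology-product} and then applies Corollary \ref{cor-Q-m-continuous}, with the locally compact case subsumed since local compactness implies core compactness. Nothing is missing, and your remark about specializing $S=P$ is sound because the theorem quantifies over all posets $S$.
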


Using the Scott topology and the lower topology, we give the following useful characterization of property R.

\begin{lemma}\label{lem-charat-property-R-using-Scott-topology} Let $P$ be a poset and $Q=(\omega^{*}(P),\supseteq)$. Then the following two conditions are equivalent:
\begin{enumerate}[\rm (1)]
 \item $P$ has property R.
\item For any $U\in\sigma(P)$, $\Phi(U)=\{C\in\omega^{*}(P):C\subseteq U\}\in\sigma(Q)$.
\end{enumerate}
\end{lemma}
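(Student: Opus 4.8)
The plan is to unwind what Scott-openness in $Q=(\omega^{*}(P),\supseteq)$ actually means and then match it against the finitary condition in property R. First I would record the order-theoretic structure of $Q$. Since a directed subset of $Q$ is exactly a family $\{C_d:d\in D\}\subseteq\omega^{*}(P)$ that is \emph{filtered} under inclusion, and since $\omega^{*}(P)$ is closed under arbitrary intersections (an intersection of $\omega(P)$-closed sets is $\omega(P)$-closed), the supremum of such a family in $Q$ exists and equals $\bigcap_{d\in D}C_d$. Moreover $\Phi(U)=\{C\in\omega^{*}(P):C\subseteq U\}$ is always an upper set of $Q$: if $C\subseteq U$ and $C\leq_Q C'$, i.e.\ $C'\subseteq C$, then $C'\subseteq U$. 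Hence Condition (2) is equivalent to the purely set-theoretic statement $(\ast)$: for every filtered family $\{C_d:d\in D\}\subseteq\omega^{*}(P)$ and every $U\in\sigma(P)$, $\bigcap_{d\in D}C_d\subseteq U$ implies $C_{d}\subseteq U$ for some $d\in D$. Thus the lemma reduces to proving that property R is equivalent to $(\ast)$.

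For (1) $\Rightarrow$ (2), assume $P$ has property R and let $\{C_d:d\in D\}$ be filtered with $\bigcap_{d}C_d\subseteq U$, $U\in\sigma(P)$. Discarding the trivial case where some $C_d=\emptyset$, I would write each $C_d$ as $C_d=\bigcap_{j\in J_d}\uparrow F_{d,j}$ with $\uparrow F_{d,j}\in\mathbf{Fin}P$, using the description $\omega^{*}(P)=\{\emptyset,P\}\cup\{\bigcap\mathcal F:\mathcal F\subseteq\mathbf{Fin}P\}$ (with $P=\bigcap\emptyset$). The whole family $\{\uparrow F_{d,j}:d\in D,\ j\in J_d\}\subseteq\mathbf{Fin}P$ has intersection $\bigcap_{d}C_d\subseteq U$, so property R yields a \emph{finite} subfamily, involving only finitely many indices $d_1,\dots,d_k$, whose intersection lies in $U$; a fortiori $\bigcap_{l=1}^{k}C_{d_l}\subseteq U$. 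Finally, filteredness of $\{C_d\}$ produces a single $d^{\ast}\in D$ with $C_{d^{\ast}}\subseteq\bigcap_{l=1}^{k}C_{d_l}\subseteq U$, which is exactly $(\ast)$.

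For (2) $\Rightarrow$ (1), assume $(\ast)$ and let $\{\uparrow F_i:i\in I\}\subseteq\mathbf{Fin}P$ with $\bigcap_{i\in I}\uparrow F_i\subseteq U$, $U\in\sigma(P)$ (the case $I=\emptyset$ being trivial). For $J\in I^{(<\omega)}$ put $C_J=\bigcap_{i\in J}\uparrow F_i\in\omega^{*}(P)$. Then $\{C_J:J\in I^{(<\omega)}\}$ is filtered, since $C_{J_1}\cap C_{J_2}=C_{J_1\cup J_2}$, and $\bigcap_{J}C_J=\bigcap_{i\in I}\uparrow F_i\subseteq U$. Applying $(\ast)$ gives some finite $J$ with $C_J=\bigcap_{i\in J}\uparrow F_i\subseteq U$, which is property R with $I_0=J$.

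The main obstacle is not any single estimate but getting the translation in the first step exactly right: one must correctly identify the Scott topology of the \emph{reverse-inclusion} poset $Q$ (so that directed sets become filtered families of closed sets and directed suprema become intersections), and then bridge the gap between the generating family $\mathbf{Fin}P$ appearing in the definition of property R and the general members of $\omega^{*}(P)$ on which $\Phi(U)$ is defined. The step where property R only delivers a finite subfamily of the $\uparrow F_{d,j}$ spread across several $d$'s, and filteredness is then invoked to collapse this to one $C_{d^{\ast}}$, is the delicate point of the forward implication.
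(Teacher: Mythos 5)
Your proof is correct. The backward direction (2) $\Rightarrow$ (1) is essentially identical to the paper's: both form the filtered family $C_T=\bigcap_{i\in T}\uparrow F_i$ indexed by the finite subsets $T$ of $I$ and apply Scott-openness of $\Phi(U)$ to its supremum. The forward direction, however, takes a genuinely different route. The paper invokes Theorem \ref{property-R-charac}, by which property R implies that $P\setminus U$ is compact in $(P,\omega(P))$ (a fact whose proof in that theorem rests on Alexander's Subbase Lemma), and then runs a short covering argument: $P\setminus U\subseteq\bigcup_{d\in D}(P\setminus C_d)$, compactness yields a finite subcover, and filteredness of $\{C_d : d\in D\}$ collapses it to a single $C_d\subseteq U$. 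You instead work straight from the definition of property R: you decompose each $C_d$ as $\bigcap_{j\in J_d}\uparrow F_{d,j}$ with $\uparrow F_{d,j}\in\mathbf{Fin}\,P$, apply property R to the combined double-indexed family to extract a finite subfamily touching only finitely many indices $d_1,\dots,d_k$, deduce $\bigcap_{l=1}^{k}C_{d_l}\subseteq U$ a fortiori, and then use filteredness to find a single $d^{\ast}$ with $C_{d^{\ast}}\subseteq\bigcap_{l=1}^{k}C_{d_l}\subseteq U$ --- this last collapse, which you rightly flag as the delicate point, is carried out correctly. Your route is more self-contained, bypassing Theorem \ref{property-R-charac} and hence Alexander's Subbase Lemma, at the cost of the bookkeeping with the family $\{\uparrow F_{d,j}\}$ (and a harmless choice of a representation for each $C_d$, with the trivial cases $C_d=\emptyset$ and $C_d=P=\bigcap\emptyset$ handled as you indicate). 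Both arguments rest on the same structural facts you verify at the outset: $\omega^{*}(P)$ is closed under arbitrary intersections, so directed suprema in $Q$ exist and are intersections of filtered families, and $\Phi(U)$ is automatically an upper set of $Q$; the paper uses these silently, whereas you make the reduction of (2) to the filtered-intersection statement $(\ast)$ explicit, which is a clarity gain.
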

\begin{proof} (1)$\Rightarrow$(2): Clearly, $\Phi(U)$ is an upper set of $Q$. Suppose that $\mathcal G=\{C_{d}:d\in D\}\in\mathcal D(Q)$ with $\bigvee_{Q}\mathcal{G}=\bigcap_{d\in D}C_{d}\in\Phi(U)$, i.e., $\bigcap_{d\in D}C_{d}\subseteq U$. Then $P\setminus U\subseteq \bigcup_{d\in D}(P\setminus C_{d})$ and $\{P\setminus C_{d}:d\in D\}\subseteq \omega(P)$ is directed. By the property R of $P$ and
 Theorem \ref{property-R-charac}, $P\setminus U$ is compact in $(P,\omega(P))$, whence there is $d\in D$ such that $P\setminus U\subseteq P\setminus C_{d}$ or, equivalently, $C_{d}\in \Phi(U)$. Thus $\Phi(U)\in\sigma(Q)$.

(2)$\Rightarrow$(1): Let $\{\mathord{\uparrow} F_{i}:i\in I\}\subseteq\mathbf{Fin}~P$ and $U\in\sigma(P)$ with $\bigcap_{i\in I}\mathord{\uparrow} F_{i}\subseteq U$. For each $T\in I^{(<\omega)}$, let $C_{T}=\bigcap_{i\in T}\mathord{\uparrow} F_{i}$. Then $\{C_{T}:T\in I^{(<\omega)}\}\in \mathcal D(Q)$ and $\bigcap_{T\in I^{(<\omega)}}C_{T}=\bigcap_{i\in I}\mathord{\uparrow} F_{i}\subseteq U$, that is, $\bigvee_{Q}\{C_T : T\in I^{(<\omega)}\}=\bigcap_{T\in I^{(<\omega)}}C_{T}\in\Phi(U)$. By $\Phi(U)\in\sigma(Q)$, there exists $S\in I^{(<\omega)}$ such that $C_{S}=\bigcap_{i\in S}\mathord{\uparrow} F_{i}\in\Phi(U)$, i.e., $C_{S}=\bigcap_{i\in S}\mathord{\uparrow} F_{i}\subseteq U$. So $P$ has property R.
\end{proof}

Now we give another main result of this paper.

\begin{theorem}\label{property-R-Scott-sober} Let $P$ be a poset satisfying property R and $\Sigma~\!\!(P\times P)=\Sigma~\!\!P\times\Sigma~\!\!P$. Then $\Sigma~\!\!P$ is sober.
\end{theorem}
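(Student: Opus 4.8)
The plan is to show directly that every irreducible closed subset of $\Sigma~\!\!P$ is the closure of a (necessarily unique) point. Fix $A\in\ir_c(\Sigma~\!\!P)$. First I note that property R forces $P$ to be a dcpo by Proposition \ref{basic-property-of-property-R}, and that $A$, being Scott closed, is a lower set that contains the supremum of any directed subset of itself. Hence it suffices to prove that $A$ is \emph{directed}: once this is known, $\vee A$ exists in the dcpo $P$, lies in $A$ because $A$ is Scott closed (apply closedness to the directed set $A$ itself), and is therefore the largest element of $A$, so that $A=\da(\vee A)=\overline{\{\vee A\}}$; uniqueness of the representing point is automatic since $\Sigma~\!\!P$ is $T_0$.

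The engine of the directedness argument is the map $m:P\times P\to Q$, $(x,y)\mapsto\ua x\cap\ua y$, where $Q=(\omega^{*}(P),\supseteq)$. Two facts make it usable. By Corollary \ref{cor-Q-m-continuous}, the hypothesis $\Sigma~\!\!(P\times P)=\Sigma~\!\!P\times\Sigma~\!\!P$ makes $m:\Sigma~\!\!P\times\Sigma~\!\!P\to\Sigma~\!\!Q$ continuous with respect to the \emph{product} topology. And by Lemma \ref{lem-charat-property-R-using-Scott-topology}, property R guarantees that for every $U\in\sigma(P)$ the set $\Phi(U)=\{C\in\omega^{*}(P):C\subseteq U\}$ is Scott open in $Q$. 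I will apply the latter to the Scott-open set $W=P\setminus A$.

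Now take $a_1,a_2\in A$ and argue by contradiction, assuming $A\cap\ua a_1\cap\ua a_2=\emptyset$. This says precisely that $m(a_1,a_2)=\ua a_1\cap\ua a_2\subseteq W$, i.e.\ $m(a_1,a_2)\in\Phi(W)\in\sigma(Q)$. By continuity of $m$, the point $(a_1,a_2)$ lies in the product-open set $m^{-1}(\Phi(W))$, so there are $U_1,U_2\in\sigma(P)$ with $a_1\in U_1$, $a_2\in U_2$ and $m(U_1\times U_2)\subseteq\Phi(W)$; concretely, $\ua u_1\cap\ua u_2\cap A=\emptyset$ for all $u_1\in U_1$ and $u_2\in U_2$. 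Since $a_1\in A\cap U_1$ and $a_2\in A\cap U_2$ are both nonempty, the irreducibility of $A$ (Remark \ref{irr-open-charac}) yields some $b\in A\cap U_1\cap U_2$. Taking $u_1=u_2=b$ gives $\ua b\cap A=\emptyset$, contradicting $b\in A\cap\ua b$. Hence $A\cap\ua a_1\cap\ua a_2\neq\emptyset$, so $A$ is directed, and the reduction of the first paragraph completes the proof.

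The substantive content is packaged entirely in the two cited results, so no serious calculation remains; the delicate point I would treat most carefully is the passage from $\Sigma~\!\!(P\times P)=\Sigma~\!\!P\times\Sigma~\!\!P$ to continuity of $m$ against the product topology (Corollary \ref{cor-Q-m-continuous}), since $m$ is only automatically continuous as a map out of $\Sigma~\!\!(P\times P)$ (Lemma \ref{lem-Q-m-continuous}). Without the product hypothesis the final step would collapse, because that step genuinely needs an open rectangle $U_1\times U_2$ in order to feed two separate open neighborhoods into the irreducibility of $A$. It is worth emphasizing that, unlike the classical sup-semilattice route behind Proposition \ref{jointly-Scott-continuous-Scott-topology-sober}, this argument never uses binary suprema in $P$: the complete lattice $Q$ of $\omega(P)$-closed sets supplies the required \emph{join-like} operation $m$ in full generality.
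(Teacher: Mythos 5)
Your proof is correct and follows essentially the same route as the paper's own: both reduce sobriety to showing that an irreducible closed set $A$ is directed, and both derive directedness by combining the continuity of $m:\Sigma~\!\!P\times\Sigma~\!\!P\rightarrow\Sigma~\!\!Q$ (Corollary \ref{cor-Q-m-continuous}, via the product hypothesis) with the Scott-openness of $\Phi(P\setminus A)$ (Lemma \ref{lem-charat-property-R-using-Scott-topology}, via property R), extracting an open rectangle $U_1\times U_2$, invoking irreducibility to find $b\in A\cap U_1\cap U_2$, and reaching the contradiction $\uparrow b\cap A=\emptyset$. The only difference is expository: you argue contrapositively on an arbitrary pair $a_1,a_2\in A$ where the paper assumes non-directedness outright, which is the same argument.
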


\begin{proof} First, by Proposition \ref{basic-property-of-property-R}, $P$ is a dcpo. Suppose that $A\in \ir_c(\Sigma~\!\!P)$. We show that $A$ is directed. Assume, on the contrary, that $A$ is not directed. Then there exist $b, c\in A$ such that $\mathord{\uparrow} b\cap\mathord{\uparrow} c\cap A=\emptyset$, whence $\mathord{\uparrow} b\cap\mathord{\uparrow} c\subseteq P\setminus A\in \sigma(P)$. Let $Q=(\omega^{*}(P),\supseteq)$ and $U=P\setminus A$. Then by Lemma \ref{lem-charat-property-R-using-Scott-topology}, $\Phi(U)=\{C\in\omega^{*}(P):C\subseteq U\}\in \sigma(Q)$. By $\Sigma~\!\!(P\times P)=\Sigma~\!\!P\times\Sigma~\!\!P$ and Corollary \ref{cor-Q-m-continuous}, $m:\Sigma~\!\!P\times\Sigma~\!\!P \rightarrow (Q,\sigma(Q))$, $m(x,y)=\mathord{\uparrow} x\cap\mathord{\uparrow} y$, is continuous, and hence by $m(b,c)=\mathord{\uparrow} b\cap\mathord{\uparrow} c\in\Phi(U)\in\sigma(Q)$, there exist $V,W\in\sigma(P)$ such that $(b,c)\in V\times W\subseteq m^{-1}(\Phi(U))$. As $A\in \ir_c(\Sigma~\!\!P)$, $A\cap V\neq\emptyset$ and $A\cap W\neq\emptyset$, we have $A\cap V\cap W\neq\emptyset$. Select a $z\in A\cap V\cap W$. Then $m(z,z)=\mathord{\uparrow} z\cap\mathord{\uparrow} z=\mathord{\uparrow} z\in m^{-1}(\Phi(U))$, i.e., $\mathord{\uparrow} z\subseteq U=P\setminus A$, a contradiction. So $A$ is directed and hence $\vee A\in A$, and consequently, $A=\mathord{\downarrow} \vee A=\cl_{\sigma(P)}\{\vee A\}$. Thus $\Sigma~\!\!P$ is sober.
\end{proof}

\begin{theorem}\label{property-R-Scott-sober-cor1} Let $P$ be a poset satisfying one of the following conditions:
\begin{enumerate}[\rm (i)]
\item $\mathrm{Id} (P)$ is countable.
\item $\Sigma~\!\!P$ is a $c$-space.
 \item $P$ is an $\ell f_{\omega}$-poset.
 \item $P$ is an $\ell c_{\omega}$-poset.
 \item $\Sigma~\!\!(P\times P)=\Sigma~\!\!P\times\Sigma~\!\!P$.
\end{enumerate}
Suppose that $P$ also satisfies one of the following conditions:
 \begin{enumerate}[\rm (a)]
\item $(P, \lambda (P))$ is compact.
\item $(P, \lambda (P))$ is upper semicompact.
\item $\Sigma~\!\!P$ is well-filtered and coherent.
 \item $P$ has property R.
\end{enumerate}
 \noindent Then $\Sigma~\!\!P$ is sober.
\end{theorem}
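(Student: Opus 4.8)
The plan is to funnel every admissible combination of hypotheses into Theorem~\ref{property-R-Scott-sober}, whose two assumptions are that $P$ has property R and that $\Sigma~\!\!(P\times P)=\Sigma~\!\!P\times\Sigma~\!\!P$. Accordingly I would prove two separate reductions: that each of (i)--(v) forces $\Sigma~\!\!(P\times P)=\Sigma~\!\!P\times\Sigma~\!\!P$, and that each of (a)--(d) forces property R. Once both are in hand, any pairing of a condition from the first list with one from the second supplies exactly the hypotheses of Theorem~\ref{property-R-Scott-sober}, and the sobriety of $\Sigma~\!\!P$ follows immediately.

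For the first reduction I would assemble the chain (i) $\Rightarrow$ (iii) $\Rightarrow$ (iv) $\Rightarrow$ (v) from the results of Section~4. If $\mathrm{Id}(P)$ is countable, then $P$ is an $\ell f_{\omega}$-poset by Proposition~\ref{poset-P-Id-P-countable-imply-P-is-c-poset}; every $\ell f_{\omega}$-poset is an $\ell c_{\omega}$-poset, as noted immediately after Definition~\ref{def-lf-omega-dcpo}; and for an $\ell c_{\omega}$-poset $P$, Theorem~\ref{Scott-topology-on-product-of-two-lc-omega-poset} applied with $Q=P$ gives $\Sigma~\!\!(P\times P)=\Sigma~\!\!P\times\Sigma~\!\!P$. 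This settles (i), (iii), (iv) and (v) in one stroke. The remaining case (ii) I would treat separately: if $\Sigma~\!\!P$ is a $c$-space, then by Lemma~\ref{C-space=CD-topology} every point $x$ of an open set $U$ admits $u\in U$ with $x\in\ii\,\ua u\subseteq\ua u\subseteq U$, and since each $\ua u$ is (super)compact in $\Sigma~\!\!P$, the space $\Sigma~\!\!P$ is locally compact; Corollary~\ref{Scott-topology-product-cor} (with $S=P$) then again yields $\Sigma~\!\!(P\times P)=\Sigma~\!\!P\times\Sigma~\!\!P$.

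For the second reduction, (a) $\Rightarrow$ (b) $\Rightarrow$ (d) are precisely the implications (1) $\Rightarrow$ (2) $\Rightarrow$ (3) of Proposition~\ref{basic-property-of-property-R}, so either Lawson-compactness hypothesis delivers property R, while (d) is property R outright. Condition (c) needs one preliminary remark: since every well-filtered space is a $d$-space and hence a dcpo, a poset $P$ with $\Sigma~\!\!P$ well-filtered and coherent is automatically a dcpo, so Proposition~\ref{cor-Jia-Jung-Li} is applicable and gives property R.

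With both reductions established, I would finish by invoking Theorem~\ref{property-R-Scott-sober}. The argument is in essence an exercise in bookkeeping, the substantive work having already been carried out in the preceding propositions and in Theorem~\ref{property-R-Scott-sober} itself. The only two steps that are not a one-line citation are the passage (ii) $\Rightarrow$ (v), where one must observe that a $c$-space has locally compact Scott topology before applying Corollary~\ref{Scott-topology-product-cor}, and the passage (c) $\Rightarrow$ property R, where one must first note that well-filteredness already forces $P$ to be a dcpo so that Proposition~\ref{cor-Jia-Jung-Li} applies. I anticipate no deeper obstacle than verifying these two small reductions carefully.
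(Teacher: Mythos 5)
Your proposal is correct, and its global strategy --- reduce each of (i)--(v) to (v), reduce each of (a)--(d) to (d), and invoke Theorem~\ref{property-R-Scott-sober} --- is exactly the paper's. The one genuine divergence is condition (ii). The paper's proof chains (i) $\Rightarrow$ (ii) $\Rightarrow$ (iii) by reading (ii) as ``$P$ is a $c$-poset'' in the sense of Definition~\ref{def-c-poset} (the word ``$c$-space'' in the statement is evidently a slip), citing Proposition~\ref{poset-P-Id-P-countable-imply-P-is-c-poset} and Theorem~\ref{c-poset-is-lf-omega-poset}; you instead took (ii) at face value as the topological $C$-space condition and routed it separately through local compactness (each $\ua u$ being supercompact, hence compact) and Corollary~\ref{Scott-topology-product-cor}. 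Your reading is self-consistent and your argument for it is sound; indeed, under the literal $C$-space reading the paper's claimed implication (i) $\Rightarrow$ (ii) would fail --- Johnstone's dcpo $\mathbb{J}$ has countable $\mathrm{Id}(\mathbb{J})$ (Example~\ref{examp-Scott-topology-on-product-of-two-Johnstone-dcpos}), yet $\Sigma~\!\mathbb{J}$ is not a $C$-space, since by Lemma~\ref{continuous-domain=Scott-topology-CD} that would make $\mathbb{J}$ a continuous domain and hence sober --- so your decision to handle (ii) outside the chain is the right move for the statement as printed, at the cost of not recovering the paper's full chain of implications among (i)--(v). A small additional merit of your write-up: in case (c) you justified the applicability of Proposition~\ref{cor-Jia-Jung-Li}, which is stated for dcpos, by noting that well-filteredness makes $\Sigma~\!\!P$ a $d$-space and hence $P$ a dcpo; the paper leaves this step implicit.
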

\begin{proof} By Theorem \ref{Scott-topology-on-product-of-two-lc-omega-poset}, Theorem \ref{c-poset-is-lf-omega-poset} and Proposition \ref{poset-P-Id-P-countable-imply-P-is-c-poset}, we have that (i) $\Rightarrow$ (ii) $\Rightarrow$ (iii) $\Rightarrow$ (iv) $\Rightarrow$ (v).
From Proposition \ref{basic-property-of-property-R} and Proposition \ref{cor-Jia-Jung-Li} we deduce that (a) $\Rightarrow$ (b) $\Rightarrow$ (d), and (c) $\Rightarrow$ (d). Hence by Theorem \ref{property-R-Scott-sober} we get Theorem \ref{property-R-Scott-sober-cor1}.

\end{proof}

\begin{remark}\label{property-R-Scott-sober-cor4} The result that Condition (i) and Condition (c) in Theorem \ref{property-R-Scott-sober-cor1} together imply the sobriety of $\Sigma~\!\!P$ was proved in \cite{Miao-Xi-Li-Zhao-2022} using a different method.
\end{remark}

For a countable dcpo $P$, Example \ref{examp-Scott-topology-on-product-of-two-Johnstone-dcpos} and Example \ref{exam-Scott-topology-on-product-of-two-Jia-dcpos} show that if $P$ only satisfies one of Conditions (i)-(v) of Theorem \ref{property-R-Scott-sober-cor1} (even $\mathrm{Id} (L)$ is countable) but does not have property R, the Scott space $\Sigma~\!\!P$ may not be sober. As a complete lattice, the Isbell's non-sober lattice $L$ has property R but does not satisfy any of Conditions (i)-(v) of Theorem \ref{property-R-Scott-sober-cor1}.

Also, for a countable dcpo $P$, Example \ref{examp-Scott-topology-on-product-of-two-Johnstone-dcpos} shows that if $\Sigma~\!\! (P\times P)=\Sigma~\!\!P\times\Sigma~\!\!P$ and $\Sigma~\!\!P$ is coherent but not well-filtered, the Scott space $\Sigma~\!\!P$ may not be sober. Example \ref{exam-Scott-topology-on-product-of-two-Jia-dcpos} shows that if $\Sigma~\!\! (P\times P)=\Sigma~\!\!P\times\Sigma~\!\!P$ and $\Sigma~\!\!P$ is well-filtered but not coherent, the Scott space $\Sigma~\!\!P$ may not be sober.

As we all know, the Johnstone's dcpo $\mathbb{J}$ is a countably infinite dcpo whose Scott
space is non-sober and the Isbell's non-sober complete lattice $L$ is neither distributive nor countable. Thus in 1994, Abramsky and Jung asked
whether there is a distributive complete lattice whose Scott space is non-sober (see \cite[Exercises 7.3.19-6]{Abramsky-Jung-1994} or \cite{Jung-2019}). In \cite{Xu-Xi-Zhao-2021}, using Isbell's lattice, Xu, Xi and Zhao gave a positive answer to this problem.

\begin{theorem}\label{spacial frame Scott is non-sober} \emph{(\cite{Xu-Xi-Zhao-2021})} Let $L$ be the Isbell's non-sober lattice. Then $\mathbf{K}(\Sigma~\!\!L)$ is a spatial frame and the Scott space of $\mathbf{K}(\Sigma~\!\!L)$ is non-sober.
\end{theorem}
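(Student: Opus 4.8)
The plan is to prove the two assertions separately, noting that the first in fact holds for the Scott space of an \emph{arbitrary} complete lattice and that only the non-sobriety uses Isbell's construction. Write $M=\mathbf{K}(\Sigma~\!\!L)$ for the poset of nonempty compact saturated subsets of $\Sigma~\!\!L$ under the Smyth order $\sqsubseteq$. First I would record the ambient structure: since $L$ is a complete lattice its least element lies in no proper Scott-open set, so $\Sigma~\!\!L$ is compact and $L\in M$, while Proposition \ref{Lawson-compact-Scott-wf-2} makes $\Sigma~\!\!L$ well-filtered. Because $L$ is a lattice, $\ua x\cap\ua y=\ua(x\vee y)$ is a principal filter, hence Scott compact, for all $x,y$; therefore Lemma \ref{lem-Jia-Jung-Li} yields that $\Sigma~\!\!L$ is coherent. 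Coherence makes finite intersections of members of $M$ lie in $M$, and for an arbitrary family the downward directed family of its finite subintersections has, by Proposition \ref{WF-filtered-compact-sets-cap-compact}, a compact saturated intersection, which is nonempty because the top $\top$ of $L$ belongs to every nonempty saturated set. Thus arbitrary intersections of members of $M$ stay in $M$, so by Lemma \ref{KX-sup} every family has a join given by intersection; together with the finite meets $K_1\sqcap K_2=K_1\cup K_2$, the top $\{\top\}$ and the bottom $L$, this exhibits $M$ as a complete lattice.

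Next I would identify the lattice operations of $M$: the meet is set union and the arbitrary join is set intersection. Consequently the frame law $x\wedge\bigvee A=\bigvee_{a\in A}(x\wedge a)$ is exactly the set-theoretic identity $C\cup\bigcap_{a\in A}D_a=\bigcap_{a\in A}(C\cup D_a)$, all sets involved remaining in $M$ by the closure properties above, so $M$ is a frame. For spatiality (Definition \ref{def-spatial-frame}) I would verify that each $\ua x$ is prime: if $\ua x\subseteq K_1\cup K_2$ with $K_1,K_2\in M$, then $x\in K_1$ or $x\in K_2$, and as these are upper sets $\ua x\subseteq K_1$ or $\ua x\subseteq K_2$, which translated through $\sqsubseteq$ is primeness. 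Since for $K\in M$ the meet of $\{\ua x:x\in K\}$ is the least compact saturated set containing $\bigcup_{x\in K}\ua x=K$, namely $K$ itself, every element is a meet of primes and $M$ is spatial. (Equivalently, $K\mapsto L\setminus K$ is a join- and finite-meet-preserving embedding of $M$ into $\mathcal O(L,\kappa)$, where $\kappa$ is the topology whose closed sets are $\{\emptyset\}\cup M$, so Lemma \ref{spatial-frame=lattice-of-open-sets} applies.)

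For the non-sobriety of $\Sigma~\!\!M$ I would transport Isbell's witness along the canonical map. Fix $A\in\ir_c(\Sigma~\!\!L)$ with $A\neq\overline{\{x\}}$ for every $x$, available since $\Sigma~\!\!L$ is non-sober. The map $\xi:\Sigma~\!\!L\to\Sigma~\!\!M$, $x\mapsto\ua x$, preserves directed sups, for $\ua\bigvee D=\bigcap_{d\in D}\ua d=\bigvee_M\{\ua d:d\in D\}$, hence is Scott continuous by Lemma \ref{Scott-cont-charac}; by Lemma \ref{irr-image} the set $\mathcal C=\cl_{\sigma(M)}\xi(A)$ is irreducible closed in $\Sigma~\!\!M$. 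If $\Sigma~\!\!M$ were sober then $\mathcal C=\da_\sqsubseteq K_0=\{K\in M:K_0\subseteq K\}$ for some $K_0$. From $\xi(A)\subseteq\mathcal C$ I would get $K_0\subseteq\ua a$ for all $a\in A$, so $K_0\subseteq\bigcap_{a\in A}\ua a=\ua\bigvee A$. Also $K_0\cap A\neq\emptyset$: otherwise $K_0\subseteq L\setminus A\in\sigma(L)$, and since $\Box(L\setminus A)$ is Scott open in $M$ (a $\sqsubseteq$-upper set whose directed joins are controlled by well-filteredness) and contains $K_0\in\mathcal C$, it meets $\xi(A)$, forcing some $a\in A$ with $a\in\ua a\subseteq L\setminus A$, absurd. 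Taking $p\in K_0\cap A$, the inclusion $K_0\subseteq\ua\bigvee A$ gives $p\geq\bigvee A$, so $p$ is an upper bound of $A$ lying in $A$; hence $A=\da p=\overline{\{p\}}$, contradicting the choice of $A$, and $\Sigma~\!\!M$ is non-sober.

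The lattice bookkeeping of the first two paragraphs is routine. The genuinely delicate point is the last step: the Scott topology on $M$ is strictly finer than the upper Vietoris topology of the Smyth power space, so the non-sobriety of the latter (which one could extract from Theorem \ref{Smyth-sober}) cannot be quoted directly, and a putative generic point must be excluded \emph{inside} the finer topology. The crux is the interplay between $K_0\subseteq\ua\bigvee A$ and $K_0\cap A\neq\emptyset$, which pins the generic point down to an actual element $p=\bigvee A$ of $A$ and thereby contradicts the non-sobriety of $\Sigma~\!\!L$.
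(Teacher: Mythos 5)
The paper states this theorem without proof, citing \cite{Xu-Xi-Zhao-2021}, so there is no in-paper argument to compare against; judged on its own, your proof is correct and essentially complete. The structural half checks out: $\Sigma~\!\!L$ is compact, well-filtered by Proposition \ref{Lawson-compact-Scott-wf-2}, and coherent via $\ua x\cap\ua y=\ua(x\vee y)$ together with Lemma \ref{lem-Jia-Jung-Li}; hence $\mk(\Sigma~\!\!L)$ is closed under finite unions and, by Proposition \ref{WF-filtered-compact-sets-cap-compact} and the observation that $\top$ lies in every nonempty upper set, under arbitrary intersections, so Lemma \ref{KX-sup} makes it a complete lattice in which joins are intersections and finite meets are unions; the frame law then reduces to a powerset identity, and each $\ua x$ is prime with $K=\bigwedge\{\ua x: x\in K\}$. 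The non-sobriety half is also sound, and it is the genuinely novel part: $\xi$ preserves sups because $\bigcap_{d\in D}\ua d=\ua\bigvee D$; the Scott-openness of $\Box(L\setminus A)$ in $\Sigma~\!\!\mk(\Sigma~\!\!L)$ is exactly where well-filteredness enters (via Lemma \ref{KX-sup}, directed sups in $\mk(\Sigma~\!\!L)$ are filtered intersections); and the two constraints $K_0\subseteq\ua\bigvee A$ and $K_0\cap A\neq\emptyset$ correctly force $A=\da p=\overline{\{p\}}$, contradicting the choice of $A$. Your route differs from the cited one: \cite{Xu-Xi-Zhao-2021} works through the Smyth power space, identifying the Scott topology on $\mk(\Sigma~\!\!L)$ with the upper Vietoris topology (cf. \cite{Xu-Yang-2021}) and then transferring non-sobriety via Theorem \ref{Smyth-sober}, whereas you bypass the coincidence question entirely, needing only the single containment that every $\Box U$ is Scott open; this makes your argument more self-contained, at the cost of redoing by hand the exclusion of a generic point.

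Two blemishes, neither fatal. First, the parenthetical alternative for spatiality does not work as written: since every $K\in\mk(\Sigma~\!\!L)$ is nonempty, the map $K\mapsto L\setminus K$ misses the top element $L$ of $\mathcal O(L,\kappa)$, so it is an embedding onto a proper subframe rather than an isomorphism onto a full open-set lattice, and Lemma \ref{spatial-frame=lattice-of-open-sets} does not apply verbatim (it can be repaired by passing to the space $L\setminus\{\top\}$ with closed sets $\{K\setminus\{\top\}: K\in\mk(\Sigma~\!\!L)\}$, but your direct prime-element argument already suffices, so the parenthesis should simply be dropped or fixed). Second, in your closing commentary the assertion that the Scott topology on $\mk(\Sigma~\!\!L)$ \emph{is strictly} finer than the upper Vietoris topology is unjustified: well-filteredness gives the containment, and nothing in your argument rules out equality --- indeed equality for this $L$ is precisely what makes the Smyth-power-space shortcut of the cited proof viable --- so the claim should be weakened to ``at least as fine''; your proof uses only the containment in any case.
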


In 2019, in a talk \cite{Jung-2019} given in National Institute of Education, Singapore, Achim Jung also posed the following problem.

\vskip 0.1cm

$\mathbf{Problem~4.}$ Is there a countable complete lattice that has a non-sober Scott topology?

\vskip 0.1cm

In \cite{Miao-Xi-Li-Zhao-2022}, Miao, Xi, Li and Zhao constructed a countably infinite complete lattice whose Scott space is non-sober and thus gave a negative answer to Problem 4. The structure of such a countable complete lattice is somewhat complicated. Indeed, if $L$ is a countable complete lattice whose Scott space is non-sober, then by Corollary \ref{Scott-topology-on-product-is-Scott-topology-product-imply-sober} and Corollary \ref{Scott-topology-on-product-of-two-special-countable-dcpos}, there is $x\in L$ such that $\mathcal {I}_x=\{I\in \mathrm{Id}(L) : \vee I=x\}$ is uncountable, whence by Lemma \ref{count-directed-set-sup-count-chain-sup} there are uncountably many strictly ascending chains $C_{\alpha}$ (of course, every $C_{\alpha}$ is countable) with $x$ as their suprema such that the generated ideals $\da C_{\alpha}$ are pairwise different. In \cite{Miao-Xi-Li-Zhao-2022} it was also shown that there is a countable distributive complete lattice whose Scott space is non-sober.

So naturally one asks the following two questions.

\begin{question}\label{countable-cHa-Scott-non-sober-question} Is there a countable frame whose Scott space is non-sober?
\end{question}

\begin{question}\label{countable-spacial-frame-Scott-non-sober-question} Is there a countable spatial frame whose Scott space is non-sober?
\end{question}

\end{document}